\NewDocumentCommand{\grad}{e{_^}}{%
  \mathop{}\!%
  \nabla
  \IfValueT{#1}{_{\!#1}}%
  \IfValueT{#2}{^{#2}}%
}
\newcommand{\R}{\bbR}
\newcommand{\e}{\varepsilon}
\DeclareMathOperator{\tr}{tr}
\DeclareMathOperator*{\argmin}{arg\, min}
\DeclareMathOperator{\Lip}{Lip}
\def\longrightharpoonup{\relbar\joinrel\rightharpoonup}
\newtheorem{theorem}{Theorem}[section]
\newtheorem{corollary}[theorem]{Corollary}
\newtheorem{lemma}[theorem]{Lemma}
\newtheorem{proposition}[theorem]{Proposition}
\newtheorem{assumption}[theorem]{Assumption}
\theoremstyle{definition}
\newtheorem{definition}[theorem]{Definition}
\newenvironment{remark}
{\pushQED{\qed}\remarkx} %
{\popQED\endremarkx}
\newenvironment{example}
{\pushQED{\qed}\examplex} %
{\popQED\endexamplex}
\DeclarePairedDelimiter{\abs}{\lvert}{\rvert}
\DeclarePairedDelimiter{\norm}{\lVert}{\rVert}
\DeclarePairedDelimiter{\bra}{(}{)}
\DeclarePairedDelimiter{\pra}{[}{]}
\DeclarePairedDelimiter{\set}{\{}{\}}
\DeclarePairedDelimiter{\skp}{\langle}{\rangle}
\DeclareMathAlphabet{\mathup}{OT1}{\familydefault}{m}{n}
\newlength{\leftstackrelawd}
\newlength{\leftstackrelbwd}
\def\leftstackrel#1#2{\settowidth{\leftstackrelawd}%
{${{}^{#1}}$}\settowidth{\leftstackrelbwd}{$#2$}%
\addtolength{\leftstackrelawd}{-\leftstackrelbwd}%
\leavevmode\ifthenelse{\lengthtest{\leftstackrelawd>0pt}}%
{\kern-.5\leftstackrelawd}{}\mathrel{\mathop{#2}\limits^{#1}}}
 \def\calE{{\mathcal E}} \def\calF{{\mathcal F}}
\def\calM{{\mathcal M}}  
\def\calP{{\mathcal P}}  
\def\calS{{\mathcal S}}  
\def\calY{{\mathcal Y}} 
\def\rmD{{\mathrm D}}
\def\bbD{{\mathbb D}}  
 \def\bbK{{\mathbb K}} 
 \def\bbN{{\mathbb N}} 
  \def\bbR{{\mathbb R}}
\def\bbS{{\mathbb S}} \def\bbT{{\mathbb T}} 
 \def\bbZ{{\mathbb Z}}
\newcommand{\dist}{\operatorname{dist}}
\newcommand{\proj}{\mathrm{proj}}
\newcommand{\divr}{\operatorname{div}}
\title{Nonlinear diffusion limit of non-local interactions \\on a sphere}
\author{Mark Peletier$^{1}$   \quad Anna Shalova$^{1, 2,}$\thanks{\href{mailto:a.shalova@uva.nl}{a.shalova@uva.nl}. The research was conducted while AS was at the Technical University of Eindhoven.}\vspace{0.3em} \\
\normalsize $^{1}$ Department of Mathematics and Computer Science,
\\
\normalsize Eindhoven University of Technology, \\
\normalsize $^{2}$ Korteweg-de Vries Institute for Mathematics,
\\
\normalsize University of Amsterdam} 
\begin{document}

\maketitle

\begin{abstract}
We study an aggregation PDE with competing attractive and repulsive forces on a sphere of arbitrary dimension. In particular, we consider the limit of strongly localized repulsion with a constant attraction term. We prove convergence of solutions of such a system to solutions of the aggregation-diffusion equation with a porous-medium-type diffusion term. The proof combines variational techniques with elements of harmonic analysis on a sphere. In particular, we characterize the square root of the convolution operator in terms of the spherical harmonics, which allows us to overcome difficulties arising due to the convolution on a sphere being non-commutative. The study is motivated by the toy model of transformers introduced by Geshkovski et al. \cite{geshkovski2024mathematical}; and we discuss the applicability of the results to this model.
\end{abstract}

\tableofcontents

\section{Introduction}

\subsection{Aggregation equations with and without diffusion}
In this paper we consider an aggregation equation on a sphere $\bbS^{n-1}$ in the presence of a both attractive and repulsive interactions. Concretely, we study measure-valued solutions $\rho_t: [0,T] \to \calP(\bbS^{n-1})$ of the equation
\begin{equation}
\tag{AE}
\label{eq:intro-main}
\partial_t\rho_t = \nabla \cdot (\rho_t \nabla W *\rho_t) + \nabla \cdot (\rho_t \nabla V_\varepsilon *\rho_t),
\end{equation}
where $\nabla \cdot$ and $\nabla$ are the spherical divergence and gradient and the symbol $*$ denotes spherical convolution, which is defined as
\[
(U*\mu)(x) := \int_{\bbS^{n-1}} U(x, y)d\mu(y).
\]

In the equation \ref{eq:intro-main}, the function $W\in C^2(\bbS^{n-1}\times \bbS^{n-1},\bbR)$ is a fixed interaction kernel and $(V_\varepsilon)_{\varepsilon >0}$ is a family of repulsive interaction kernels satisfying $V_\varepsilon\in C^1(\bbS^{n-1}\times \bbS^{n-1}, \bbR)$; we assume that both $W$ and $V_\varepsilon$ are rotationally symmetric, namely take the form $W(x, y) = W(\left<x, y\right>)$. 

In this work we consider the case of localized repulsion, corresponding to the limit in which the repulsive kernels $V_\varepsilon$ converge to a delta function as $\varepsilon \to 0$. We show that in this regime the solutions $(\rho^\e)_{\varepsilon>0}$ of the aggregation equation \eqref{eq:intro-main} converge to solutions of an aggregation-\emph{diffusion} equation with porous-medium-type nonlinear diffusion, 
\begin{equation}
\tag{ADE}
    \label{eq:intro-diffuse}
    \partial_t\rho_t = \nabla \cdot (\rho_t \nabla W *\rho_t) + \frac{1}{2}\Delta \rho_t^2,
\end{equation}
where $\Delta = \nabla \cdot \nabla$ is the Laplace-Beltrami operator. With a slight abuse of notation we write $\mu(x)$ for the density of a measure $\mu(dx)$ that is absolutely continuous with respect to the uniform probability measure $\sigma$ on $\bbS^{n-1}$, and therefore the expression $\rho_t^2$ should be read as the square of this  density of $\rho_t$.  Our proof combines the approach presented in~\cite{burger2023porous} (see also~\cite{matthes2009family}) with the spectral analysis of convolution on a sphere used in~\cite{shalova2024anoisy-transformer}. The spectral approach that we develop here for the Hilbert space $L^2(\bbS^{n-1})$ can be generalized to convolution operators on a larger class of compact manifolds, and we discuss this in more detail in the last section.

Note that depending on $W$, the corresponding interaction term can describe both attractive and repulsive interaction. We do not assume attractive or repulsive behaviour of $W$,  but we remark that the more interesting behavior appears when $W$ favors localized solutions, also called clusters. In this case the equation \eqref{eq:intro-main} can be interpreted as balancing counteracting long-range attractive and short-range repulsive forces. This is exactly the case in the main motivating example, the toy transformer model of~\cite{geshkovski2024mathematical}. We introduce this example in Section \ref{sec:relevance}, discuss the relevance of the global-attraction local-repulsion setting for the toy transformers, and outline the key challenges for applying our theoretical findings to actual transformers.

\paragraph{Heuristic explanation.} 
We now give a non-rigorous explanation why the equation~\eqref{eq:intro-main} should converge to~\eqref{eq:intro-gf} as $\e\to0$.
Equation~\eqref{eq:intro-main} has a gradient-flow structure in the space of probability measures $\calP(\bbS^{n-1})$ in the sense that it admits a representation of the form
\begin{equation}
\label{eq:intro-gf}
\partial_t\rho_t = \nabla \cdot \left(\rho_t\nabla \frac{\delta \calF_{\varepsilon}}{\delta \rho}(\rho_t)\right),
\end{equation}
where $\calF_\varepsilon: \calP(\bbS^{n-1}) \to \bbR$ is the energy functional %
defined as
\begin{align}
   \MoveEqLeft \calF_\varepsilon(\rho):= \frac{1}{2}\int_{\bbS^{n-1} \times \bbS^{n-1}} W(x, y)\rho(x)\rho(y)d\sigma(x)d\sigma(y) \notag \\
   &\qquad\qquad+\frac{1}{2}\int_{\bbS^{n-1} \times \bbS^{n-1}} V_\varepsilon(x, y)\rho(x)\rho(y)d\sigma(x)d\sigma(y) 
   \label{eq:intro-free-energy}
    \end{align}
and $\frac{\delta \calF_{\varepsilon}}{\delta \rho}$ is the variational derivative of $\calF_\varepsilon$. 
Evolution equations of this  form are known as Wasserstein gradient flows, see \eqref{eq:wasserstein} for the definition of the Wasserstein distance. %

Consider a solution $\rho_t$ of the aggregation equation \eqref{eq:intro-main} that admits a density $\rho_t = \frac{d\rho_t}{d\sigma}$ with respect to the uniform spherical measure~$\sigma$. Our assumptions on the repulsive kernel~$V_\e$ in Assumption~\ref{ass:repulsive} below imply that for every $x\in \bbS^{n-1}$ the measures $V_\varepsilon(x,y )\sigma(dy)$ converge to the measure  $\delta_x$. Therefore the free energy functional \eqref{eq:intro-free-energy} $\Gamma$-converges to the limit $\calF_0$,
\[
\calF_0(\rho) 
= \frac{1}{2}\int_{\bbS^{n-1} \times \bbS^{n-1}} W(x, y)\rho(x)\rho(y)d\sigma(x)d\sigma(y) 
+\frac{1}{2}\int_{\bbS^{n-1} \times \bbS^{n-1}}\rho^2(x)d\sigma(x).
\]
Calculating the first variation of the limiting free-energy functional we obtain $\frac{\delta \calF_{0}}{\delta \rho_t} = W*\rho_t + \rho_t$. Substituting this into the gradient flow equation \eqref{eq:intro-gf} yields
\[
\partial_t\rho_t = \nabla \cdot (\rho_t \nabla W *\rho_t) + \nabla \cdot(\rho_t \nabla \rho_t)= \nabla \cdot (\rho_t \nabla W *\rho_t) + \frac{1}{2}\Delta \rho_t^2.
\]
This heuristic calculation shows how the localized repulsive interactions converge to non-linear diffusion. We remark that the above argument is informal and we only provide it here for illustrative purposes. Note that similar results have been established in various settings on flat space \cite{oelschlager2001sequence, burger2023porous}, and we give a more detailed overview of the existing results in the next section.

\subsection{Related work}
\paragraph{Non-linear diffusion limit of non-local interactions.}
The convergence of localized repulsion to non-linear diffusion has been relatively widely studied in the Euclidean setting. One of the first results in this direction is the work of Oelschl\"ager \cite{oelschlager1990large}, in which the porous medium equation is recovered as the limiting dynamics of a system of deterministic interacting particles with localized repulsion. Later Philipowski and Figalli proved a similar convergence to nonlinear diffusion equations for a sequence of stochastic particle systems with vanishing noise~\cite{oelschlager2001sequence,philipowski2007interacting, figalli2008convergence}.  In \cite{carrillo2019blob}, the solutions of the porous medium equation are approximated by gradient-flow solutions of the regularized energy functional. We remark that the regularization arising in the blob method is exactly the convolution with a strongly localized kernel. An inhomogeneous counterpart of the latter result has been recently introduced in \cite{craig2023blob}. Recently, the rate of convergence of the nonlocal-to-local limit has been established for a specific choice of mollifier in one dimension in~\cite{carrillo2025rate}.

This work relies on a different approximation of the solutions of the porous-medium equation developed in \cite{burger2023porous}, which makes use of the gradient flow structure of the underlying system. This approach has been recently extended to a larger class of non-linear diffusion equations in~\cite{carrillo2024nonlocal}. %

\paragraph{Aggregation equations with nonlinear diffusion} We refer the reader to \cite{carrillo2019aggregation} for an overview of results concerning aggregation-diffusion equations on $\bbR^d$ and only mention a few reference points. Existence and uniqueness of stationary solutions are studied in \cite{carrillo2019existence, delgadino2022uniqueness}, and  bifurcation branches are characterized in \cite{CarrilloGvalani2021}. Existence results for time-dependent solutions to aggregation-diffusion equations on flat spaces are established in various settings in \cite{BurgerCapassoMorale2007, bertozzi2010existence}. 

\paragraph{Aggregation PDEs on manifolds}
Aggregation-diffusion PDEs on manifolds is a topic of an active research. In particular, stationary solutions of aggregation equations with linear diffusion are studied in \cite{fetecau2023equilibria, fetecau2023ground, carrillo2024existence, shalova2024anoisy-transformer} and with nonlinear diffusion in \cite{carrillo2025global}. In \cite{fetecau2023equilibria} the authors study the stationary solutions of the aggregation PDE on Cartan-Hadamar (hyperbolic) manifolds. Existence and long-time behaviour of the time-dependent solutions of interaction models on manifolds of bounded curvature are characterized in \cite{fetecau2023long,fetecau2021intrinsic}, in both cases for initial data with support in a particular strict subset of the manifold.

\subsection{Main contributions}
The main contributions of this paper are
\begin{enumerate}
\item We prove convergence of solutions of \eqref{eq:intro-main} to solutions of \eqref{eq:intro-diffuse} on a sphere of arbitrary dimension.
\item We prove that for any well-behaved initial condition $\rho_0$ the solution $\rho_t$ of~\eqref{eq:intro-diffuse} admits a density for arbitrary $t\in \bbR_+$, and the density is bounded in $L^2$ on any interval $(0, t)$.
\item We relate the equation \eqref{eq:intro-main} to the toy transformer model introduced in \cite{geshkovski2024mathematical} and, based on the presented analytical results, give an interpretation of the role of the repulsive heads in transformer models.
\end{enumerate}
We remark that in \cite{burger2023porous}, convergence of the solutions of \eqref{eq:intro-main} to solutions of \eqref{eq:intro-diffuse} is shown under a structural assumption on the localized kernel: it is assumed that there exists a `convolution square root' of $V_\varepsilon$, namely a function~$\sqrt[*]{V_\varepsilon}$ satisfying $V_\varepsilon = \sqrt[*]{V_\varepsilon} *\sqrt[*]{V_\varepsilon}$. In this work we give a sufficient condition for the existence of $\sqrt[*]{V_\varepsilon}$ in terms of the spherical harmonics decomposition of~$V_\varepsilon$. %
The approach can also easily extended to the setting of the torus $\bbT^d$ and to other compact Riemannian manifolds and we discuss this in more detail in Section~\ref{sec:discussion}.

\subsection{Notation}

We write $\calP(\bbS^{n-1})$ for the set of probability measures. The `uniform' measure $\sigma\in\calP(\bbS^{n-1})$ is the normalized spherical measure (the $(n{-}1)$-dimensional Hausdorff measure on $\bbS^{n-1}$), or equivalently the normalized volume measure on the sphere equipped with the metric generated by the standard Euclidean product in $\bbR^{n}$. We write $\rho_n\stackrel w\to \rho$ for the weak convergence in $\calP(\bbS^{n-1})$, which is generated by duality with continuous functions on $\bbD^{n-1}$.

The Hilbert space $L^2(\bbS^{n-1})$ is the set of (equivalence classes of) square-integrable functions on $\bbS^{n-1}$ equipped with the scalar product
    \[
    \left<f, g\right> = 
    \int_{\bbS^{n-1}}f(x)g(x)d\sigma(x).
    \]

We also often consider elements  $\rho$ in the intersection $\calP(\bbS^{n-1})\cap L^2(\bbS^{n-1})$. In this case we implicitly assume that $\rho$ is absolutely continuous with respect to $\sigma$, with a density that we denote as $\rho(x)$; explicitly, we consider that $\rho(dx) = \rho(x) \sigma(dx)$. 
We also use the  notation $u_n\stackrel w \to u$ for weak convergence for elements of $L^2$ and other Hilbert spaces, which is defined as usual in duality with the same Hilbert space. 

We write $g$ for the Riemannian metric on $\bbS^{n-1}$. The operators $\nabla$,  $\nabla\cdot$, and $\Delta$  always indicate the spherical gradient and spherical divergence and the Laplace-Beltrami operator. The space $H^1(\bbS^{n-1})$ is the Sobolev space of weakly differentiable functions with squared norm
\[
\|u\|_{H^1(\bbS^{n-1})}^2 := \|u\|_{L^2(\bbS^{n-1})}^2 + \|\nabla u\|_{L^2(T\bbS^{n-1})}^2.
\]
We give more background on the differential geometry that we use in Appendix~\ref{sec:geometry}.

\paragraph*{Acknowledgements}
The authors are grateful to Rafael Bailo, Nicolas Boumal, Jasper Hoeksema and Jim Portegies for helpful discussions. The work was supported by the Dutch Research Council (NWO), in the framework of the program `Unraveling Neural Networks with Structure-Preserving Computing' (file number OCENW.GROOT.2019.044).

\section{Properties of the interaction kernels}
\label{sec:kernels}
In this section we introduce and explain the main assumptions on both attractive and repulsive interaction kernels. Since most of the properties of the kernels are formulated in terms of the spherical harmonics, we  give a short introduction to these and to the convolution operator on a sphere in Sections~\ref{ssec:harmonics} and~\ref{ssec:properties}. After that, we formulate the assumptions on the interaction kernels in see Section \ref{sec:repulsive}. Finally, we introduce the necessary estimates in Section \ref{ssec:estimates}.

\subsection{Spherical harmonics}
\label{ssec:harmonics}
    The orthonormal basis of $L^2(\bbS^{n-1})$ known as the `spherical harmonics basis' can be constructed as follows, see e.g. \cite[Chapter 1.5]{dai2013approximation} %
    Introduce the spherical coordinates $\theta_1,\dots,\theta_{n-1}$ on $\bbS^{n-1}$, such that for all $x\in \bbS^{n-1}$:
    \begin{align*}
        x_1 &= r\sin \theta_{n-1} \cdots \sin\theta_{2}\sin\theta_{1}, \\
        x_2 &= r\sin \theta_{n-1} \cdots  \sin\theta_{2}\cos\theta_{1}, \\
        x_3 &= r\sin \theta_{n-1} \cdots  \cos\theta_{2}, \\
        &\vdots\\
        x_n &= r\cos\theta_{n-1}.
    \end{align*}
The corresponding basis of spherical harmonics in the given spherical coordinates is given by: %
\begin{equation*}
\label{eq:harmonics}
Y_{l, k}(\theta) = e^{ik_{n-2}\theta_1}A_k^l \prod_{j=0}^{n-3} C_{k_j-k_{j+1}}^{\frac{n-j-2}{2}+k_{j+1}}(\cos \theta_{n-j-1}) (\sin \theta_{n-j-1})^{k_{j+1}},
\end{equation*}
where $l\in \bbN_0$, $k\in \bbK_l$ is a multi-index satisfying
\begin{equation*}%
\bbK_l := \set*{k = (k_0, k_1 ,\dots, k_{n-2})\in \bbN_0^{n-2}\times \bbZ: l\equiv k_0 \geq k_1 \geq \cdots \geq k_{n-3} \geq |k_{n-2}|\geq 0},
\end{equation*}
$A_k^l$ is a normalization constant and $C^\lambda_m$ is the Gegenbauer polynomial of degree $m$. 
\begin{definition}[Gegenbauer polynomials]
Gegenbauer polynomials are defined recursively to satisfy the following relation:
\begin{equation*}
(n+2)C_{n+2}^\lambda(t) = 2(\lambda+n+1)tC_{n+1}^\lambda(t) -(2\lambda+n)C_{n}^\lambda(t),
\end{equation*}
where the first two polynomials are given by $C_0^\lambda(t) = 1$ and $C_1^\lambda(t) = 2\lambda t$.
\end{definition}

Note that by definition spherical harmonics are smooth functions. Moreover, spherical harmonics are eigenfunctions of the Laplace-Beltrami operator with eigenvalues depending only on the index $l$:
\[
\lambda_l = -l(n-2+l),
\]
and
thus (by the Hilbert-Schmidt theorem) form an orthonormal basis on $L^2(\bbS^{n-1})$. In particular, if we define the projection operator  onto the $l$-th subspace by $\proj_l: L^2(\bbS^{n-1}) \to L^2(\bbS^{n-1})$ 
\[
\proj_l f:= \sum_{k\in \bbK_l} Y_{l, k} \left<f, Y_{l, k}\right>,
\]
then the following theorem holds.
\begin{theorem}[Fourier decomposition on $\bbS^{n-1}$ {\cite[Theorem 2.2.2]
{dai2013approximation}}]
\label{th:sph-decomposition}
Let $Y_{l, k}$  be the spherical harmonics defined by \eqref{eq:harmonics}, then the set
\[
\calY = \{ Y_{l,k}: l \in \bbN_0, k \in \bbK_l \}
\]
is an orthonormal basis of $L^2(\bbS^{n-1})$. In particular for any $f \in L^2(\bbS^{n-1})$ the following identity holds
\[
f =\sum_{l\in \bbN_0} \proj_l f,
\]
in the sense that $\lim_{n\to \infty} \|f -  \sum_{l=1}^n \proj_l f\|_{L_2} =0$.
\end{theorem}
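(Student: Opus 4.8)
The statement is the classical completeness theorem for spherical harmonics, so the plan is the classical one: verify orthonormality by a direct computation, and then prove completeness by identifying the linear span of the $Y_{l,k}$ with polynomial restrictions and invoking Stone--Weierstrass. For a fixed degree $l$, mutual orthogonality and correct normalization of $\{Y_{l,k}\}_{k\in\bbK_l}$ follow from the product structure of \eqref{eq:harmonics}: the factors $e^{ik_{n-2}\theta_1}$ are orthogonal on $[0,2\pi)$, and the Gegenbauer polynomials $C_m^{\lambda}$ are orthogonal with respect to the weight $(1-t^2)^{\lambda-1/2}\,dt$, which after the substitution $t=\cos\theta_{n-j-1}$ matches exactly the powers of $\sin\theta_{n-j-1}$ appearing both in \eqref{eq:harmonics} and in the Riemannian volume element; the constants $A_k^l$ are defined precisely to make the resulting integral equal to $1$. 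For distinct degrees $l\neq l'$, the functions $Y_{l,k}$ and $Y_{l',k'}$ are eigenfunctions of the self-adjoint operator $\Delta$ for the distinct eigenvalues $\lambda_l=-l(n-2+l)$ and $\lambda_{l'}=-l'(n-2+l')$ (distinct because $l\mapsto l(n-2+l)$ is strictly increasing on $\bbN_0$ when $n\geq 2$), hence are orthogonal.

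For completeness, write $\calH_l\subset L^2(\bbS^{n-1})$ for the subspace of restrictions of harmonic homogeneous polynomials of degree $l$ on $\bbR^n$, and $\Pi_N$ for the restrictions of all polynomials of degree at most $N$. The key algebraic fact is $\Span\{Y_{l,k}:l\leq N,\ k\in\bbK_l\}=\Pi_N$. First, each $Y_{l,k}$ lies in $\calH_l$: it is a $\Delta$-eigenfunction with eigenvalue $\lambda_l$, the $\lambda_l$-eigenspace on $\bbS^{n-1}$ is exactly $\calH_l$, and the orthonormal family $\{Y_{l,k}\}_{k\in\bbK_l}$ has cardinality $|\bbK_l|=\dim\calH_l=\binom{n+l-1}{l}-\binom{n+l-3}{l-2}$, so it is a basis of $\calH_l$. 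Second, decomposing an arbitrary homogeneous polynomial $p$ of degree $m$ as $p=\sum_{j\geq 0}|x|^{2j}h_{m-2j}$ with each $h_{m-2j}$ harmonic homogeneous of degree $m-2j$, and restricting to $|x|=1$, shows $p|_{\bbS^{n-1}}\in\calH_m\oplus\calH_{m-2}\oplus\cdots$; summing over $m\leq N$ gives $\Pi_N\subseteq\bigoplus_{l\leq N}\calH_l=\Span\{Y_{l,k}:l\leq N\}$, and the reverse inclusion is trivial since harmonic polynomials are polynomials.

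Next, the union $\bigcup_N\Pi_N$ is a subalgebra of $C(\bbS^{n-1})$ containing the constants and separating points (already the coordinate functions $x_i$ do), so by Stone--Weierstrass it is uniformly dense in $C(\bbS^{n-1})$; since $\bbS^{n-1}$ is compact and $\sigma$ finite, it is then dense in $L^2(\bbS^{n-1})$. Combined with the span identity this gives $\overline{\Span\calY}=L^2(\bbS^{n-1})$, so the orthonormal system $\calY$ is an orthonormal basis. Finally, the partial sum $\sum_{l\leq N}\proj_l f$ equals the orthogonal projection of $f$ onto the finite-dimensional subspace $\bigoplus_{l\leq N}\calH_l$, and these subspaces form an increasing exhaustion of $L^2(\bbS^{n-1})$; hence the projections converge to $f$ in $L^2$, which is the asserted identity $f=\sum_l\proj_l f$ (and, via Parseval, $\|f\|_{L^2}^2=\sum_{l,k}|\langle f,Y_{l,k}\rangle|^2$).

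The only step that is not essentially immediate is the identification, used in the middle paragraph, of the explicitly-defined functions \eqref{eq:harmonics} with an orthonormal basis of each harmonic-polynomial eigenspace $\calH_l$: one must check both that they are $\Delta$-eigenfunctions and that there are exactly $\dim\calH_l$ of them. This is the genuinely technical point, and it is usually handled by induction on the dimension $n$: separating the polar angle reduces the eigenvalue equation on $\bbS^{n-1}$ to a one-dimensional singular Sturm--Liouville problem whose eigenfunctions are the Gegenbauer polynomials, with the multi-index set $\bbK_l$ bookkeeping the recursion over $\bbS^{n-2}\subset\bbS^{n-1}$. As this construction is standard and the theorem is quoted verbatim from \cite[Theorem 2.2.2]{dai2013approximation}, one may simply cite it there and regard the orthonormality and the density/conclusion steps as the self-contained content.
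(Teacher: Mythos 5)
The paper offers no proof of this statement at all: it is quoted verbatim from \cite[Theorem 2.2.2]{dai2013approximation} and used as a known fact, so there is no argument of the paper's to compare against. Your write-up is the standard textbook proof of that classical result --- orthonormality within a degree from the product structure of \eqref{eq:harmonics} and Gegenbauer orthogonality, orthogonality across degrees from self-adjointness of $\Delta$ with distinct eigenvalues, and completeness from the decomposition $p=\sum_j |x|^{2j}h_{m-2j}$ of homogeneous polynomials, Stone--Weierstrass, and the cardinality count $|\bbK_l|=\dim\calH_l$ --- and it is correct in outline. One ordering caveat: the identification of the full $\lambda_l$-eigenspace of $\Delta$ with $\calH_l$, which you invoke to place each $Y_{l,k}$ in $\calH_l$, is itself normally deduced \emph{after} the density of $\bigoplus_l \calH_l$ in $L^2(\bbS^{n-1})$ (or obtained by the separation-of-variables/Sturm--Liouville induction you mention), so it should either come after the polynomial-decomposition and Stone--Weierstrass step or be cited; as you yourself note, deferring that technical identification to \cite{dai2013approximation} is entirely reasonable here, since the paper treats the whole theorem as a citation anyway.
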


Given an arbitrary spherical harmonics basis one can define the \emph{zonal harmonics}, namely the functions $Z_l: \bbS^{n-1}\times \bbS^{n-1} \to \bbR$ of the form
\begin{equation}
    \label{eq:zonal}
    Z_l(x, y) := \sum_{k\in\bbK_l} Y_{l,k}(x)Y_{l, k}(y).
\end{equation}
Using the formula above we conclude that the projection operator takes the following form in terms of the zonal harmonics:
\[
(\proj_l f)(x) = \int_{\bbS^{n-1}} f(x) Z_l(x, y) d\sigma(y).
\]
As follows from \cite[Lemma 1.2.3]{dai2013approximation}, the zonal harmonics $Z_l$ are independent of the choice of the basis $\calY$; moreover, the following relation holds.
\begin{proposition}[Zonal harmonics {\cite[Theorem 1.2.6]{dai2013approximation}}]
\label{prop:zonal}
    For arbitrary $x, y \in \bbS^{n-1}$ and $l \in \bbN_0$ the zonal harmonics $Z_l$ take the following representation in terms of the Gegenbauer polynomials
    \[
    Z_l(x, y) = \frac{2l + n-2}{n-2}C_l^{\frac{n-2}{2}}(\left<x, y\right>).
    \]
\end{proposition}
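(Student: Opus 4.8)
The identity is the classical addition theorem for spherical harmonics, and my plan is to recover it directly from the ingredients already assembled rather than quote the reference. Write $\calH_l := \proj_l L^2(\bbS^{n-1})$ for the finite-dimensional eigenspace of $\Delta$ with eigenvalue $\lambda_l = -l(n-2+l)$, spanned by $\{Y_{l,k}: k\in\bbK_l\}$, so that $\dim\calH_l = |\bbK_l|$. \emph{Step 1 (zonality).} Fix $y\in\bbS^{n-1}$. By \eqref{eq:zonal}, $x\mapsto Z_l(x,y)$ lies in $\calH_l$. For any rotation $R\in SO(n)$ the family $\{Y_{l,k}\circ R^{-1}\}$ is again an orthonormal basis of $\calH_l$, so by the basis-independence of $Z_l$ (\cite[Lemma 1.2.3]{dai2013approximation}) one has $Z_l(Rx,Ry) = Z_l(x,y)$ for all $x,y$. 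Choosing $R$ in the stabilizer of $y$, a copy of $SO(n-1)$ that acts transitively on each latitude $\{x:\langle x,y\rangle = t\}$, shows that $Z_l(x,y)$ depends on $x,y$ only through $t = \langle x,y\rangle$; I write $Z_l(x,y) = \phi_l(\langle x,y\rangle)$. Taking $x=y$ and using transitivity of $SO(n)$ on $\bbS^{n-1}$ also shows that $Z_l(y,y) = \phi_l(1)$ is a constant independent of $y$.

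\emph{Step 2 (identification with the Gegenbauer polynomial).} Fix $e\in\bbS^{n-1}$ and introduce a polar angle $\theta$ with $\cos\theta = \langle x,e\rangle =: t$. A function of $t$ alone belongs to $\calH_l$ precisely when its profile $u(t)$ solves the radial part of the eigenvalue equation $\frac{1}{\sin^{n-2}\theta}\frac{d}{d\theta}\bigl(\sin^{n-2}\theta\, u'\bigr) = \lambda_l u$, which in the variable $t$ becomes the Gegenbauer differential equation $(1-t^2)u'' - (n-1)t u' + l(n-2+l)u = 0$; equivalently, $x\mapsto |x|^l C_l^{\frac{n-2}{2}}(\langle x,e\rangle/|x|)$ extends to a harmonic homogeneous polynomial of degree $l$ on $\bbR^n$ and hence restricts to an element of $\calH_l$. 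The only solution of that equation bounded on $[-1,1]$ is, up to a scalar, the polynomial $C_l^{\frac{n-2}{2}}$; since $\phi_l$ is the restriction of a continuous function on the compact sphere it is bounded, so $\phi_l = c_l\, C_l^{\frac{n-2}{2}}$ for a constant $c_l$. Here $c_l\neq 0$, because $\int_{\bbS^{n-1}} Z_l(y,y)\,d\sigma(y) = \sum_{k\in\bbK_l}\|Y_{l,k}\|_{L^2}^2 = |\bbK_l| > 0$.

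\emph{Step 3 (the constant) and the main obstacle.} By Step 1, $\phi_l(1) = Z_l(y,y)$ for every $y$, and integrating in $y$ gives $\phi_l(1) = |\bbK_l| = \dim\calH_l$. Using the standard evaluations $C_l^{\frac{n-2}{2}}(1) = \binom{n+l-3}{l}$ and $\dim\calH_l = \frac{2l+n-2}{n-2}\binom{n+l-3}{l}$ (valid for $n\ge 3$; for $n=2$ both sides are read in the limiting, Chebyshev sense) yields $c_l = \phi_l(1)/C_l^{\frac{n-2}{2}}(1) = \frac{2l+n-2}{n-2}$, which is the asserted formula. The substantive points are the rotation-invariance of $Z_l$ in Step 1, which collapses the problem to one dimension, and in Steps 2--3 the identification of the radial eigenvalue equation with the Gegenbauer equation together with the two classical identities (the value $C_l^{\frac{n-2}{2}}(1)$ and the dimension $\dim\calH_l$) needed to pin down the normalization. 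If one prefers to avoid the dimension formula, an alternative for Step 3 is to combine the reproducing identity $\int_{\bbS^{n-1}} Z_l(x,z)Z_l(z,y)\,d\sigma(z) = Z_l(x,y)$ with the orthogonality of the Gegenbauer polynomials against the weight $(1-t^2)^{\frac{n-3}{2}}$, though this merely trades one classical fact for another.
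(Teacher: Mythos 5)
Your argument is correct, but note that the paper itself does not prove this statement at all: Proposition~\ref{prop:zonal} is imported verbatim from \cite[Theorem 1.2.6]{dai2013approximation}, so there is no internal proof to match. What you have written is the classical proof of the addition theorem, and it holds up: rotation invariance of $Z_l$ via the basis-independence of the reproducing kernel of the degree-$l$ eigenspace, reduction of the eigenvalue equation $\Delta u=-l(l+n-2)u$ for a zonal profile to the Gegenbauer ODE with parameter $\tfrac{n-2}{2}$, uniqueness (up to scalar) of the bounded solution, and normalization through $Z_l(y,y)=\dim\calH_l$ together with $C_l^{\frac{n-2}{2}}(1)=\binom{n+l-3}{l}$ and $\dim\calH_l=\frac{2l+n-2}{n-2}\binom{n+l-3}{l}$; I checked these identities against the paper's normalization of $C_l^\lambda$ and they are consistent, and the use of the normalized measure $\sigma$ makes the integration step $\int Z_l(y,y)\,d\sigma=|\bbK_l|$ correct as stated. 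What your route buys is self-containedness (only standard ODE and dimension facts are quoted instead of the textbook theorem); what it costs is exactly those two classical inputs, which you could alternatively replace, as you note, by the reproducing property plus Gegenbauer orthogonality — essentially the route taken in Dai--Xu.

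Two small caveats, both confined to the degenerate case $n=2$ that you already flag: the stabilizer of $y$ acting transitively on the latitude $\{x:\langle x,y\rangle=t\}$ should be taken in $O(n)$ (reflections included) if one wants the argument to cover $n=2$, since $SO(1)$ is trivial; and the uniqueness of the bounded solution of the ultraspherical equation requires $\lambda=\tfrac{n-2}{2}\ge\tfrac12$, failing precisely at $n=2$ where both Chebyshev solutions are bounded. Since the formula in the proposition is itself read in the limiting Chebyshev sense at $n=2$, neither point affects the cases the paper actually uses, but it would be worth stating explicitly that the argument as written assumes $n\ge3$.
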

Since the expression on the right-hand side is a function of the scalar product $\left<x, y\right>$ only, we will use the notation $Z_l(x, y) = Z_l(\left<x, y\right>)$ interchangeably. 

Similarly, we can define a class of \emph{zonal} kernels.
\begin{definition}[Zonal kernels]
An interaction kernel $W: \bbS^{n-1} \times \bbS^{n-1}\to \bbR$ is \emph{zonal} if it only depends on the scalar product, namely $W(x, y) = W(\left<x, y\right>)$.
\end{definition}
 For a zonal kernel $W$ the convolution operator is defined as follows.

\begin{definition}[Convolution on $\bbS^{n-1}$]
	\label{def:conv-sn}
	Let $f\in L^2(\bbS^{n-1})$ and let $W$ be a zonal kernel satisfying the integrability condition:
	\begin{equation}\label{eq:spherical:intergrability}
		\int_{\bbS^{n-1}} \left|W(x_0, y)\right|d\sigma(y) < \infty,
	\end{equation}
	for any $x_0 \in \bbS^{n-1}$, then the convolution of $f$ with $W$ is defined as
	\[
	(W*f)(x) :=%
	\int_{\bbS^{n-1}} f(y)W(x, y)d\sigma(y).
	\]
\end{definition}
Note that since the  kernel $W$ is zonal, the integrability assumption as above does not depend on the choice of $x_0$. The symmetric structure of a zonal kernel allows to establish the spectral properties of the convolution operator, and this is the subject of the next section. 

\subsection{Spectral properties of the convolution}
\label{ssec:properties}
Recall that on a flat torus, the convolution operator is a diagonal operator in the Fourier basis. Analogously, convolution with a zonal kernel is diagonal in the basis of spherical harmonics. To make this statement concrete, in this section we define the \emph{spherical harmonics decomposition} of a zonal kernel and establish the spectral properties of the convolution operator. We also give a semi-formal calculation in the basis of spherical harmonics in order to give an intuition behind Lemmas \ref{lemma:delta} and \ref{lemma:gradient}.

For a zonal kernel $W$, we define its spherical harmonic decomposition as follows:
\begin{definition}[Spherical harmonics decomposition]
	\label{def:spherical-decomposition}
	Let $W$ be a zonal kernel satisfying the integrability condition~\eqref{eq:spherical:intergrability}. Then the sequence $(\hat W_l)_{l\in \bbN}$ is called the \emph{spherical harmonics decomposition} of $W$, where
	\[
	\hat W_l =\frac{1}{Z_l(x_0, x_0)} \int_{\bbS^{n-1}} W(x_0, y)Z_l(x_0, y)d\sigma(y),
	\]
    and $Z_l$ are the zonal harmonics.
\end{definition}
Note that due to the symmetry, the definition above does not depend on the choice of $x_0$. As follows from Proposition \ref{prop:zonal}, the spherical harmonics decomposition allows to represent any admissible $W$ as a linear combination of Gegenbauer polynomials.
\begin{lemma}[\cite{dai2013approximation}] 
\label{lemma:gegegnbauer-decomposition} 
Let $W$ be a zonal kernel satisfying the integrability condition~\eqref{eq:spherical:intergrability}, then $W$ has the following representation in terms of the Gegenbauer polynomials:
\[
W(x, y) = \sum_l\hat W_l Z_l(x, y) =  \sum_{l} \hat W_l \frac{2l +n-2}{n-2} C_l^{\frac{n-2}{2}}(\left<x, y\right>),
\]
where $(\hat W_l)_{l\in\bbN}$ is the spherical harmonics decomposition of $W$ and the equality holds in $L^2(\bbS^{n-1} \times \bbS^{n-1})$ sense. 
\end{lemma}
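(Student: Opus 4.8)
The claim is essentially a spectral/Fourier-series statement: a zonal kernel satisfying the integrability condition is represented by its spherical harmonics decomposition. The plan is to reduce the two-variable statement on $\bbS^{n-1}\times\bbS^{n-1}$ to a one-variable statement about functions on $[-1,1]$ expanded in the orthogonal basis of Gegenbauer polynomials $\{C_l^{(n-2)/2}\}$, and then invoke completeness of that basis in the appropriate weighted $L^2$ space. First I would fix $x_0$ and set $w(t) := W(x_0,y)$ where $t = \langle x_0, y\rangle$; this is well-defined by the zonal assumption. Using the standard disintegration of $\sigma$ in terms of the "latitude" variable $t = \langle x_0, y\rangle$ — namely $\int_{\bbS^{n-1}} F(\langle x_0,y\rangle)\,d\sigma(y) = c_n \int_{-1}^{1} F(t)(1-t^2)^{(n-3)/2}\,dt$ for a dimensional constant $c_n$ — the integrability condition~\eqref{eq:spherical:intergrability} translates into $w \in L^1([-1,1], (1-t^2)^{(n-3)/2}dt)$. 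To use $L^2$ orthogonality one has to either additionally assume (or note it is implicit in the paper's standing hypotheses that $W \in C^2$ or at least $W\in L^2(\bbS^{n-1}\times\bbS^{n-1})$, since the conclusion is stated in the $L^2$ sense) that $w$ lies in the weighted $L^2$ space; I would make that reduction explicit.

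The core step is then: the Gegenbauer polynomials $C_l^{(n-2)/2}$, $l = 0,1,2,\dots$, form a complete orthogonal system in $L^2([-1,1], (1-t^2)^{(n-3)/2}dt)$ — this is the classical Gegenbauer/ultraspherical orthogonality, a standard fact from \cite{dai2013approximation}. Hence $w = \sum_l a_l C_l^{(n-2)/2}$ with convergence in that weighted $L^2$, where $a_l = \|C_l^{(n-2)/2}\|^{-2}\int_{-1}^1 w(t) C_l^{(n-2)/2}(t)(1-t^2)^{(n-3)/2}dt$. Pulling this back to the sphere via the disintegration formula and Proposition~\ref{prop:zonal}, the coefficient $a_l$ matches exactly $\hat W_l \cdot \frac{2l+n-2}{n-2}$ with $\hat W_l$ as in Definition~\ref{def:spherical-decomposition}: indeed $Z_l(x_0,y) = \frac{2l+n-2}{n-2}C_l^{(n-2)/2}(\langle x_0,y\rangle)$, and the normalization factor $Z_l(x_0,x_0) = \frac{2l+n-2}{n-2}C_l^{(n-2)/2}(1) = \dim(\mathcal{H}_l)$ in Definition~\ref{def:spherical-decomposition} is precisely what converts the $L^2([-1,1],\dots)$ inner product into the $\hat W_l$ normalization. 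So one just has to carefully track these constants; this is bookkeeping, not a real obstacle.

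Finally, to upgrade from "$W(x_0,\cdot)$ as a function of one variable" to the stated two-variable $L^2(\bbS^{n-1}\times\bbS^{n-1})$ convergence, I would note that for each fixed first argument the partial sums $\sum_{l\le N}\hat W_l Z_l(x,y)$ converge in $L^2(d\sigma(y))$ with a norm (as a function of $N$, for fixed $x$) independent of $x$ by rotational symmetry, and then apply Fubini/dominated convergence over the $x$-integral; equivalently, observe that $\{Z_l\}$ are (up to normalization) the orthogonal projections associated to the eigenspaces of $\Delta$, so that any $W \in L^2(\bbS^{n-1}\times\bbS^{n-1})$ which is zonal automatically has its spherical-harmonic expansion converging in $L^2(\bbS^{n-1}\times\bbS^{n-1})$ by Theorem~\ref{th:sph-decomposition} applied in each variable. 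The main — though still modest — obstacle is the tension between the merely-$L^1$ integrability hypothesis~\eqref{eq:spherical:intergrability} and the $L^2$ mode of convergence in the conclusion: I expect the cleanest route is to invoke the paper's global regularity assumptions on $W$ (so $w$ is in fact in the weighted $L^2$ space, indeed continuous), making the completeness argument directly applicable, and I would state this dependence explicitly at the start of the proof.
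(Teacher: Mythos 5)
The paper gives no proof of this lemma at all: it is quoted directly from \cite{dai2013approximation}. Your argument is a correct reconstruction of the standard proof behind that citation --- expand the zonal profile in the Gegenbauer basis of $L^2([-1,1],(1-t^2)^{(n-3)/2}dt)$, match constants via Proposition~\ref{prop:zonal} and the reproducing identity $Z_l(x_0,x_0)=\|Z_l(x_0,\cdot)\|_{L^2(\bbS^{n-1})}^2$ so that the Gegenbauer coefficient is exactly $\hat W_l\,\tfrac{2l+n-2}{n-2}$ with $\hat W_l$ as in Definition~\ref{def:spherical-decomposition}, and use rotational invariance to pass from fixed $x_0$ to convergence in $L^2(\bbS^{n-1}\times\bbS^{n-1})$. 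You also correctly flag the only real subtlety, namely that the hypothesis~\eqref{eq:spherical:intergrability} is merely $L^1$ while the conclusion is stated in the $L^2$ sense, so $W(x_0,\cdot)\in L^2(\bbS^{n-1})$ must be assumed (as it is, implicitly, under the paper's standing $C^1$/$C^2$ regularity of the kernels); making that explicit is a fair and even welcome sharpening of the statement.
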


\begin{remark}[Defining a kernel by the spherical harmonics decomposition]
\label{remark:kernel-by-decomp}
  Consider a sequence $(a_l)_{l\in\bbN_0}$ and assume that the series
\[
A = \sum_{l=0}^\infty a_lZ_l(\left<x_0, \cdot\right>)
\]
converge in $L^2(\bbS^{n-1})$ sense, then $A(x, y) := \sum_l a_l Z_l(x, y)$ is a zonal kernel with the spherical harmonics decomposition $(a_l)_{l\in\bbN}$. In particular, if a kernel $A$ is positive semi-definite, namely satisfies $a_l \geq 0$, one can define the its 'convolution square root' as
\[
\sqrt[*]{A}(x, y) := \sum_{l}\sqrt{a_l}Z_l(x, y).
\]
We give a rigorous characterization of the 'convolution square root' for a class of the singular kernels in Section~\ref{ssec:estimates}.
\end{remark}
\begin{remark}
	Note that the coefficients $\hat W_l$ are scaled projections of $W(x_0, \cdot)$ onto the spherical harmonics basis functions $Y_{l,0}$ with a specific choice of the basis $\calY$. 
\end{remark}

With the above definition we can formulate the convolution theorem on $\bbS^{n-1}$. 
\begin{theorem}[Convolution theorem on $\bbS^{n-1}$]
\label{cor:convolution-theorem}
Let $f, W$ be as in Definition \ref{def:conv-sn}, then for any $l\in \bbN, \ k \in \bbK_l$ the $(l, k)$-th spherical harmonics coefficient of the convolution $W* f$ satisfies
\[
\left<W*f, Y_{l, k}\right>_{L^2(\bbS^{n-1})} =  \hat W_l\left<f, Y_{l, k}\right>_{L^2(\bbS^{n-1})}.
\]
\end{theorem}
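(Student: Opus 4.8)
The plan is to reduce the claim to an interchange of the order of integration (Fubini) together with the reproducing property of the zonal harmonics $Z_l$. First I would write out the $(l,k)$-th coefficient of $W*f$ directly from the definitions:
\[
\left<W*f, Y_{l,k}\right>_{L^2(\bbS^{n-1})}
= \int_{\bbS^{n-1}} \left(\int_{\bbS^{n-1}} f(y) W(x,y)\, d\sigma(y)\right) Y_{l,k}(x)\, d\sigma(x).
\]
The integrability condition~\eqref{eq:spherical:intergrability} on $W$ together with $f \in L^2(\bbS^{n-1}) \subset L^1(\bbS^{n-1})$ and boundedness of the smooth function $Y_{l,k}$ guarantees that the double integral is absolutely convergent, so Fubini applies and I may swap the integrals to get $\int_{\bbS^{n-1}} f(y) \left(\int_{\bbS^{n-1}} W(x,y) Y_{l,k}(x)\, d\sigma(x)\right) d\sigma(y)$.

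The key step is then to evaluate the inner integral $\int_{\bbS^{n-1}} W(x,y) Y_{l,k}(x)\, d\sigma(x)$, i.e. to show it equals $\hat W_l\, Y_{l,k}(y)$. Here I would invoke Lemma~\ref{lemma:gegegnbauer-decomposition}, which gives $W(x,y) = \sum_m \hat W_m Z_m(x,y)$ with convergence in $L^2(\bbS^{n-1}\times\bbS^{n-1})$; pairing against the fixed basis function $Y_{l,k}(x)$ in the $x$ variable and using continuity of the $L^2$ inner product, the sum over $m$ passes through the integral. For each $m$ I use $Z_m(x,y) = \sum_{k'\in\bbK_m} Y_{m,k'}(x) Y_{m,k'}(y)$ from~\eqref{eq:zonal} together with orthonormality of $\calY$ (Theorem~\ref{th:sph-decomposition}): the integral $\int_{\bbS^{n-1}} Y_{m,k'}(x) Y_{l,k}(x)\, d\sigma(x)$ is $1$ when $(m,k')=(l,k)$ and $0$ otherwise, so only the term $m=l$, $k'=k$ survives, leaving exactly $\hat W_l\, Y_{l,k}(y)$. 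Substituting back gives $\hat W_l \int_{\bbS^{n-1}} f(y) Y_{l,k}(y)\, d\sigma(y) = \hat W_l \left<f, Y_{l,k}\right>_{L^2(\bbS^{n-1})}$, which is the claim.

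The main obstacle is a technical one: justifying that the $L^2(\bbS^{n-1}\times\bbS^{n-1})$-convergent Gegenbauer expansion of $W$ may be integrated term by term against $Y_{l,k}(x)$ and then evaluated pointwise in $y$ — strictly speaking the resulting identity $\int_{\bbS^{n-1}} W(x,y) Y_{l,k}(x)\, d\sigma(x) = \hat W_l\, Y_{l,k}(y)$ holds for $\sigma$-a.e.\ $y$, which is all that is needed since it is subsequently integrated against $f(y)\,d\sigma(y)$. An alternative, perhaps cleaner route that sidesteps the a.e.\ subtlety is to avoid the expansion of $W$ altogether: expand $f = \sum_l \proj_l f$ instead, use linearity and continuity of the convolution operator on $L^2$ (which follows from Theorem~\ref{cor:convolution-theorem}'s prerequisites, or can be established directly via Schur-type bounds from~\eqref{eq:spherical:intergrability}), and note that $W * (\proj_l f)$ lies in the $l$-th eigenspace — this last fact again reduces to the reproducing identity for $Z_l$. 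Either way, the heart of the matter is the orthogonality computation with the zonal harmonics; everything else is bookkeeping with Fubini and dominated convergence.
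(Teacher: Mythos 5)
Your proof is correct, but it is worth noting that the paper does not actually argue this statement itself: its entire proof is a citation of \cite[Theorem 2.1.3]{dai2013approximation}, which is precisely the Funk--Hecke-type convolution theorem you are reproving. What you have written is essentially a self-contained proof of that cited result: Fubini (justified exactly as you say, since zonality makes $\int|W(x,y)|\,d\sigma(y)$ independent of $x$, $f\in L^2\subset L^1$, and $Y_{l,k}$ is bounded), followed by the identity $\int W(x,y)Y_{l,k}(x)\,d\sigma(x)=\hat W_l\,Y_{l,k}(y)$, which is the Funk--Hecke theorem and which your orthogonality computation with the zonal harmonics establishes. The only caveat is the one you already flag: routing the key identity through Lemma~\ref{lemma:gegegnbauer-decomposition} implicitly uses an $L^2(\bbS^{n-1}\times\bbS^{n-1})$ expansion, while the hypothesis of Definition~\ref{def:conv-sn} is only the $L^1$-type condition~\eqref{eq:spherical:intergrability}; this is an imprecision inherited from the paper's own statement of that lemma rather than a flaw you introduce, and the classical Funk--Hecke theorem (or your alternative route of expanding $f=\sum_l \proj_l f$ and using that $W*\proj_l f$ stays in the $l$-th eigenspace) closes it cleanly, since the pointwise-in-$y$ identity holds for merely integrable zonal kernels. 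So your argument buys a self-contained proof at the cost of a little bookkeeping, whereas the paper buys brevity by outsourcing exactly this computation to the reference.
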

The proof follows from \cite[Theorem 2.1.3]{dai2013approximation}.

\subsection{Admissible interaction kernels}
\label{sec:repulsive}
In this this section we discuss the assumptions on both interaction kernels $W$ and~$V_\varepsilon$. As mentioned in the introduction, we assume both of them to be zonal and satisfy the following regularity properties.
\begin{assumption}[Properties of the fixed interaction]
\label{assum:fixed-kernel}
The fixed interaction kernel~$W$ is zonal and satisfies $W\in C^2(\bbS^{n-1} \times  \bbS^{n-1} )$. In particular this implies
\[
\|\Delta W\|_{L^\infty(\bbS^{n-1})} := \|\Delta W(x_0, \cdot)\|_{L^\infty(\bbS^{n-1})} < \infty
\]
for any $x_0 \in \bbS^{n-1}$
\end{assumption}
We require the family $(V_\varepsilon)_{\varepsilon \in \bbR_+}$ to satisfy the following localization assumption.
\begin{assumption}[Locally repulsive kernels]
 \label{ass:repulsive}   
 Let $(V_\varepsilon)_{\varepsilon \in \bbR_+}$ be a family of zonal interaction kernels in $C^2(\bbS^{n-1}\times \bbS^{n-1})$ and let $(\hat V_{\varepsilon, l})_{l\in \bbN}$ be the coefficients of the spherical harmonics decomposition of $V_\varepsilon$. We say that the family $(V_\varepsilon)_{\varepsilon \in \bbR_+}$ satisfies the \emph{localization assumption} in the limit $\varepsilon\to 0$ if
 \begin{itemize}
 \item $ V_\varepsilon \geq 0$ and  $\|V_
 \varepsilon\|_{L^1} = \int V_\varepsilon(x, \cdot )d\sigma = 1$, for every $\varepsilon\in \bbR_+$ and arbitrary $x \in \bbS^{n-1}$,
 \item the spherical harmonics decomposition of $\hat V_\varepsilon$ is non-negative and uniformly bounded, in the sense that $\exists C > 0: \ \forall \varepsilon >0, \forall l\in\bbN_0:$
 \begin{equation}
    \label{eq:ass:hatVe-bdd}
0\leq \hat V_{\varepsilon, l} \leq C,
 \end{equation}
 and for every $\varepsilon\in \bbR_+$ satisfies $\sum_{l}l^n\hat V_{\varepsilon, l} < \infty$,
 \item the following pointwise convergence of the components of the spherical harmonics decomposition holds
 \[
\hat V_{\varepsilon, l} \to 1 \quad \text{as} \ \varepsilon \to 0,
 \]
 for every $l\in \bbN_0$.
 \end{itemize}
\end{assumption}
In particular, the above assumptions give the following uniform-in-$\varepsilon$ upper bound on the interaction energy.
\begin{lemma}[Bounds on the energy]
\label{l:energy-bound}
Let Assumptions \ref{assum:fixed-kernel} and \ref{ass:repulsive} be satisfied, and let  $\calF_\varepsilon$ be the interaction energy as defined in \eqref{eq:intro-free-energy}. Then there exists a constant $C>0$ such that for any $\rho \in L^2(\bbS^{n-1})\cap \calP(\bbS^{n-1})$ and for any $\e>0$ we have
\begin{equation}
\label{eq:upper-lower-bound-Fe}
-\frac12 \|W\|_{L^\infty(\bbS^{n-1})}
\leq \calF_\e(\rho)
\leq \frac12 \|W\|_{L^\infty(\bbS^{n-1})} + C\|\rho\|_{L^2(\bbS^{n-1})}^2.
\end{equation}
\end{lemma}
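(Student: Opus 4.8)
The statement is a two-sided bound on $\calF_\e(\rho)$ that splits naturally into the $W$-term and the $V_\e$-term, so my plan is to estimate each separately. For the $W$-term, the kernel is bounded by Assumption~\ref{assum:fixed-kernel}, and since $\rho\in\calP(\bbS^{n-1})$ has total mass one, I would simply estimate
\[
\left|\frac12\int_{\bbS^{n-1}\times\bbS^{n-1}} W(x,y)\rho(x)\rho(y)\,d\sigma(x)d\sigma(y)\right|
\leq \frac12\|W\|_{L^\infty(\bbS^{n-1})}\left(\int_{\bbS^{n-1}}\rho\,d\sigma\right)^2 = \frac12\|W\|_{L^\infty(\bbS^{n-1})},
\]
which gives both the lower bound $-\tfrac12\|W\|_{L^\infty}$ (discarding the nonnegative $V_\e$-contribution, see below) and the first half of the upper bound.

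\textbf{The $V_\e$-term.} For the upper bound I need to control $\tfrac12\iint V_\e(x,y)\rho(x)\rho(y)\,d\sigma d\sigma$ by $C\|\rho\|_{L^2}^2$ uniformly in $\e$. The clean way is via the spectral/harmonic-analysis picture: using Theorem~\ref{cor:convolution-theorem} and Parseval, write
\[
\iint V_\e(x,y)\rho(x)\rho(y)\,d\sigma(x)d\sigma(y) = \langle V_\e * \rho, \rho\rangle = \sum_{l\in\bbN_0}\sum_{k\in\bbK_l}\hat V_{\e,l}\,\bigl|\langle\rho, Y_{l,k}\rangle\bigr|^2.
\]
By the uniform bound \eqref{eq:ass:hatVe-bdd}, $\hat V_{\e,l}\le C$ for all $l$ and all $\e$, so the right-hand side is at most $C\sum_{l,k}|\langle\rho,Y_{l,k}\rangle|^2 = C\|\rho\|_{L^2(\bbS^{n-1})}^2$. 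This simultaneously shows the $V_\e$-term is nonnegative (each $\hat V_{\e,l}\ge 0$), which justifies dropping it in the lower bound. I should check that the manipulation is legitimate: $\rho\in L^2$ so its harmonic expansion converges in $L^2$, and the summability condition $\sum_l l^n\hat V_{\e,l}<\infty$ (hence certainly $\sup_l\hat V_{\e,l}<\infty$) ensures $V_\e*\rho\in L^2$ and the bilinear form is well-defined; alternatively one can truncate at level $L$, apply the finite-sum identity, and pass to the limit.

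\textbf{Assembling and main obstacle.} Combining, $\calF_\e(\rho)\le \tfrac12\|W\|_{L^\infty} + C\|\rho\|_{L^2}^2$ and $\calF_\e(\rho)\ge -\tfrac12\|W\|_{L^\infty} + 0$, which is exactly \eqref{eq:upper-lower-bound-Fe}. There is no serious obstacle here; the only point requiring a little care is making the Parseval-type identity $\langle V_\e*\rho,\rho\rangle = \sum_{l,k}\hat V_{\e,l}|\langle\rho,Y_{l,k}\rangle|^2$ rigorous for general $\rho\in L^2$ rather than just smooth $\rho$ — but this follows from Theorem~\ref{cor:convolution-theorem} together with dominated convergence on the Fourier side (the partial sums $\sum_{l\le L}\hat V_{\e,l}\|\proj_l\rho\|^2$ are increasing and bounded by $C\|\rho\|_{L^2}^2$). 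If one prefers to avoid the spectral route entirely, an alternative for the $V_\e$-term is Young's inequality on the sphere, $\|V_\e*\rho\|_{L^2}\le\|V_\e\|_{L^1}\|\rho\|_{L^2}=\|\rho\|_{L^2}$, followed by Cauchy--Schwarz: $\langle V_\e*\rho,\rho\rangle\le\|V_\e*\rho\|_{L^2}\|\rho\|_{L^2}\le\|\rho\|_{L^2}^2$, giving the bound with $C=1/2$; I would likely present this shorter argument, mentioning the spectral identity as the conceptual reason.
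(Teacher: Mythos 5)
Your argument is correct and is essentially the paper's own proof: the paper likewise expands $\rho=\sum_{l,k}\alpha_{l,k}Y_{l,k}$, writes the $V_\e$-term as $\tfrac12\sum_{l,k}\hat V_{\e,l}\alpha_{l,k}^2$, bounds it by $C\|\rho\|_{L^2}^2$ via \eqref{eq:ass:hatVe-bdd}, bounds the $W$-term by $\tfrac12\|W\|_{L^\infty}$, and gets the lower bound from $\hat V_{\e,l}\ge 0$. Your remark on the Young's-inequality shortcut is a valid (and slightly sharper) alternative, but the route you actually present coincides with the paper's.
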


\begin{proof}
Writing $\rho = \sum_{l, k} \alpha_{l, k} Y_{l, k}$ we have
\begin{align*}
   \MoveEqLeft\calF_\varepsilon(\rho) = \frac12 \int W(\left<x, y\right>)\rho(x)\rho(y)d\sigma(x)d\sigma(y) + \frac12  \sum_{l, k}\hat V_{\varepsilon, l} \alpha_{l, k}^2 
   \\
   &\leftstackrel{\eqref{eq:ass:hatVe-bdd}}\leq \frac12 \|W\|_{L^\infty} + C\sum_{l, k}\alpha_{l, k}^2 = \frac12 \|W\|_{L^\infty} + C\|\rho\|_{L^2}^2,
\end{align*}
To get the second inequality above we used the uniform bound on $\hat V_{\varepsilon, l}$ and the $L^\infty$-bound on the fixed interaction kernel $W$. Note that by Assumption \ref{ass:repulsive} the constant $\tilde C$ can be chosen independent of $\varepsilon$.

 From the non-negativity of $\hat V_\e$ we similarly obtain the opposite bound
 \[
 \calF_\e(\rho) \geq -\frac12 \|W\|_{L^\infty}.
 \qedhere
 \]
\end{proof}

 Moreover, we impose an additional assumption on the `convolution square root' which guarantees that $\sqrt[*]{V_\varepsilon} * \rho \in \calP(\bbS^{n-1})$ for arbitrary $\rho \in \calP(\bbS^{n-1})$. In particular, this assumption enables us to use Wasserstein bounds for $\sqrt[*]{V_\varepsilon} * \rho$ in Lemma \ref{lem:comp-v}.
\begin{assumption}[Non-negative `convolution square root']
 \label{ass:conv-root} 
There exists $\varepsilon_0>0$ such that for all $\varepsilon\in(0, \varepsilon_0)$, the `convolution square root' $\sqrt[*]{V_\varepsilon}$ as defined in Remark~\ref{remark:kernel-by-decomp} is a non-negative function.
 \end{assumption}

Note that the functional $\calF_\e$ has the following alternative expression in terms of the convolution square root as defined in Remark \ref{remark:kernel-by-decomp}:
 \begin{equation}
 \label{eq:alt-formulation-Fe}
 \calF_\varepsilon(\rho) = \frac{1}{2}\|\sqrt[*]{V_\varepsilon}*\rho\|^2_{L^2(\bbS^{n-1})}  + \frac{1}{2} \int W(x, y)d\rho(x)d\rho(y).
 \end{equation}

 Also note that for a non-negative kernel $V$ by definition, for arbitrary $x\in \bbS^{n-1}$, the $L^1$ norm satisfies 
 \[
 \begin{aligned}
\int |V(x, \cdot)|d\sigma = \int V(x, \cdot)d\sigma = \int(V*Y_{0, 0})(x) = \hat V_0 \int Y_{0, 0}(x)d\sigma(x) = \hat V_0 .
\end{aligned}
\]
As a result, if the family $V\varepsilon$ satisfies Assumptions \ref{ass:repulsive} and \ref{ass:conv-root}, then the kernel $\sqrt[*]{V_\varepsilon}$ is measure preserving for arbitrary $\varepsilon$, namely 
\[
\int \sqrt[*]{V_\varepsilon}(x, \cdot)d\sigma = \int V_\varepsilon(x, \cdot)d\sigma = 1.
\]

\begin{remark}[The heat kernel is admissible]
    We first remark that the set of admissible families $(V_\varepsilon)_{\varepsilon>0}$ is non-empty. For example, both Assumptions \ref{ass:repulsive} and \ref{ass:conv-root} are satisfied for the heat kernel, which admits the following decomposition into Gegenbauer polynomials:
    \begin{equation}
    \label{eq:heat-kernel-Gegenbauer}
    V_\varepsilon(x,y) = -\sum_l e^{-l(l+n-2)\varepsilon}\frac{2l+n-2}{n-2}\frac{\Gamma(\frac{n}{2})}{2\sqrt{\pi^n}}C_l^{\frac{n-2}{2}}(\skp{x,y}).
    \end{equation}
    For more details see \cite{zhao2018exact} and \cite[Section 4.6.4]{shalova2024anoisy-transformer}.
\end{remark}

\begin{remark}[Equivalence to $\bbT$]
\label{rem:scaling}
    Recall the  fact that the the delta function at $0$ defined on the interval $[-\pi, \pi]$ admits a Fourier decomposition of all ones:
    \[
    \delta_0(x) = \sum_{k=0}^\infty 1 \cdot \cos kx.
    \]
    The second part of Assumption \ref{ass:repulsive} can then be interpreted as convergence of the sequence of interaction kernels to the delta measure on a sphere. This remark also dictates the choice of the scaling in this paper. Finally note that $\bbT^1 = \bbS^1$ and thus the spherical harmonics basis on $\bbS^1$ reduces to the classical Fourier basis.
\end{remark}
\begin{remark}[On Assumption \ref{ass:conv-root}]
    Verifying Assumption \ref{ass:conv-root} for a general family~$V_\varepsilon$ might be  challenging. One possible approach relies on the decomposition into Gegenbauer polynomials. Assuming that $V_\varepsilon$ is a smooth function, by Lemma \ref{lemma:uniform} its decomposition into Gegenbauer polynomials converges uniformly, and thus it is sufficient to show that
    \[
   \sum_{l, k}\sqrt{ \hat V_{\varepsilon, l}} \frac{2l +n-2}{n-2} C_l^{\frac{n-2}{2}}(s) \geq 0,
    \]
for all $s\in [-1, 1]$. For example, comparison to the heat kernel might be of use for this. On $\bbS^1$ the kernel is decomposed in the classical Fourier basis and thus the question is similar to establishing positivity of a function from its Fourier series, which is in general an open problem.
\end{remark}

\subsection{Estimates} 
\label{ssec:estimates}
Following the intuition given above, for a family of kernels $(V_\varepsilon)_{\varepsilon\in\bbR_+}$ satisfying Assumption \ref{ass:repulsive} we define a sequence of the `square roots' $(\sqrt[*]{V_\varepsilon})_{\varepsilon\in\bbR_+}$ as in Remark \ref{remark:kernel-by-decomp} by the sequences of square roots of the corresponding coefficients $(\sqrt{V_{\varepsilon, l}})_{l\in\bbN}$ %
\begin{equation}
\label{eq:conv-root-series}
\sqrt[*]{V_\varepsilon}(x_0, \cdot) := \lim_{L\to \infty }\sum_{l\leq L}\sqrt{ \hat V_{\varepsilon, l}} Z_l(x_0, \cdot),
\end{equation}
where the limit is taken in the $L^2( \bbS^{n-1})$ sense as mentioned in the Remark \ref{remark:kernel-by-decomp}. %
In particular, Assumption \ref{ass:repulsive} guarantees that $\sqrt[*]{V_\varepsilon}$ is well-defined, namely that the series above converges.
Using the spectral representation of $\sqrt[*]{V_\varepsilon}$ we obtain the following properties of the `convolution square root' operator under the localization Assumption~\ref{ass:repulsive}.
\begin{lemma}[Convolution square root]
\label{lem:conv-sq-root}
    Let $(V_\varepsilon)_{\varepsilon\in\bbR_+}$ satisfy Assumption \ref{ass:repulsive}, then $\sqrt[*]{V_\varepsilon} \in H^1(\bbS^{n-1} \times \bbS^{n-1})$ for every $\varepsilon > 0$.
\end{lemma}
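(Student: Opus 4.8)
The plan is to work entirely in the spherical harmonics basis and reduce the claim to a summability statement about the coefficients $\sqrt{\hat V_{\varepsilon,l}}$. Recall that by Lemma~\ref{lemma:gegegnbauer-decomposition} and Remark~\ref{remark:kernel-by-decomp}, the kernel $\sqrt[*]{V_\varepsilon}$ has the representation $\sqrt[*]{V_\varepsilon}(x,y) = \sum_l \sqrt{\hat V_{\varepsilon,l}}\, Z_l(x,y)$, where $Z_l(x,y) = \sum_{k\in\bbK_l} Y_{l,k}(x)Y_{l,k}(y)$. Hence, viewing $\sqrt[*]{V_\varepsilon}$ as an element of $L^2(\bbS^{n-1}\times\bbS^{n-1})$ with the product basis $\{Y_{l,k}\otimes Y_{l',k'}\}$, its only nonzero coefficients are the diagonal ones $\langle \sqrt[*]{V_\varepsilon}, Y_{l,k}\otimes Y_{l,k}\rangle = \sqrt{\hat V_{\varepsilon,l}}$, each appearing with multiplicity $d_l := \dim\bbK_l = |\{k\in\bbK_l\}|$. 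The dimension $d_l$ grows polynomially, $d_l = O(l^{n-2})$ as $l\to\infty$ (this is the standard count of spherical harmonics of degree $l$ on $\bbS^{n-1}$). The Laplace–Beltrami eigenvalue on the degree-$l$ subspace of either factor is $\lambda_l = -l(n-2+l)$, so on the product $Y_{l,k}\otimes Y_{l,k}$ the relevant $H^1(\bbS^{n-1}\times\bbS^{n-1})$ weight is $1 + |\lambda_l| + |\lambda_l| = 1 + 2l(n-2+l)$, which is $O(l^2)$.

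First I would write out the squared $H^1$-norm in this basis:
\[
\bigl\|\sqrt[*]{V_\varepsilon}\bigr\|_{H^1(\bbS^{n-1}\times\bbS^{n-1})}^2
= \sum_{l} d_l \,\bigl(1 + 2l(n-2+l)\bigr)\,\hat V_{\varepsilon,l}
\le C' \sum_l l^{n-2}\cdot l^2 \cdot \hat V_{\varepsilon,l}
= C'\sum_l l^{n}\,\hat V_{\varepsilon,l},
\]
for a constant $C'$ depending only on $n$ and valid for $l$ large, with the finitely many small-$l$ terms absorbed into the constant. The point is that squaring a kernel turns the $H^1$-norm of $\sqrt[*]{V_\varepsilon}$ into a weighted $\ell^1$-sum of the coefficients $\hat V_{\varepsilon,l}$ themselves (not their square roots), with polynomial weight $\sim l^n$. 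But the third bullet of Assumption~\ref{ass:repulsive} states precisely that $\sum_l l^n \hat V_{\varepsilon,l} < \infty$ for every $\varepsilon\in\bbR_+$. This gives $\sqrt[*]{V_\varepsilon}\in H^1(\bbS^{n-1}\times\bbS^{n-1})$ for each fixed $\varepsilon>0$, as claimed.

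The two points that need a little care, and where I expect the only real work to lie, are: (i) justifying the count $d_l = O(l^{n-2})$ and the identification of the $H^1$-norm with the weighted coefficient sum — this requires knowing that $\{Y_{l,k}\otimes Y_{l',k'}\}$ is an orthonormal basis of $L^2(\bbS^{n-1}\times\bbS^{n-1})$ consisting of Laplace–Beltrami eigenfunctions of the product manifold (with eigenvalue $\lambda_l + \lambda_{l'}$), and that the $H^1$-norm on a compact manifold is equivalent to the weighted $\ell^2$-norm $\sum (1+|\text{eigenvalue}|)|\text{coefficient}|^2$; and (ii) confirming that the $L^2$-convergent series \eqref{eq:conv-root-series} defining $\sqrt[*]{V_\varepsilon}$ in fact converges in $H^1$, so that the limit genuinely lies in $H^1$ with the norm computed termwise — this follows from the finiteness of the weighted sum by the standard argument that partial sums are Cauchy in $H^1$. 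Neither of these is deep; the substance of the lemma is simply the observation that $\|\sqrt[*]{V_\varepsilon}\|_{H^1}^2 \lesssim \sum_l l^n \hat V_{\varepsilon,l}$, which is exactly the quantity Assumption~\ref{ass:repulsive} controls.
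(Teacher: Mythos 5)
Your proposal is correct and follows essentially the same spectral route as the paper: both reduce the $H^1$ norm of $\sqrt[*]{V_\varepsilon}$ to the weighted coefficient sum $\sum_l l^{n}\hat V_{\varepsilon,l}$, which is exactly the quantity controlled by the last part of Assumption~\ref{ass:repulsive}. The only cosmetic difference is that you expand in the tensor basis on $\bbS^{n-1}\times\bbS^{n-1}$ and use the multiplicity count $d_l = O(l^{n-2})$, whereas the paper fixes a pole $x_0$, bounds the partial sums $M_L$ in $H^1(\bbS^{n-1})$, and invokes the Gegenbauer-norm asymptotics $\|C_l^{\frac{n-2}{2}}\|_{L^2}^2 \lesssim l^{n-4}$; the two computations agree because $\|Z_l(x_0,\cdot)\|_{L^2}^2 = Z_l(x_0,x_0) = d_l$, and if anything your version addresses the product-manifold norm in the statement more directly.
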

\begin{proof}
    Denote the partial sum in \eqref{eq:conv-root-series} by $M_L$, namely
    \[
    M_L := \sum_{l\leq L}\sqrt{ \hat V_{\varepsilon, l}} Z_l(x_0, \cdot) = \sum_{l\leq L}\sqrt{ \hat V_{\varepsilon, l}} \frac{2l +n-2}{n-2} C_l^{\frac{n-2}{2}}(\left<x_0, \cdot\right>).
    \]
    Recall that the spherical harmonics are eigenfunctions of the Laplace-Beltrami operator, implying the same for the zonal harmonics, namely
    \[
    \Delta_x Z_l(x_0, x) = \Delta_x \sum_{k\in \bbK_l}Y_{l, k}(x_0)Y_{l, k}(x) =   \sum_{k\in \bbK_l}Y_{l, k}(x_0)\Delta_x Y_{l, k}(x)= \lambda_l Z_l(x_0, x),
    \]
    where 
\[
\lambda_{l} = -l(n+l-2),
\]
is the $l$-th eigenvalue of the Laplace-Beltrami operator. Moreover, by orthogonality of the spherical harmonics, for any two elements of the spherical harmonics basis $Y_{l, k}, Y_{l', k'}, \in \calY$ we obtain
\begin{equation}
\label{eq:orthogonality}
\left< Y_{l, k}, \Delta Y_{l', k'}\right> = -\left< \nabla Y_{l, k}, \nabla Y_{l', k'}\right>= \lambda_{l}\delta_{l, l'}\delta_{k, k'}.
\end{equation}
Combining the above we can bound the $H^1$ norm of $M_L$ as
\[
    \begin{aligned}
    \MoveEqLeft\|M_L\|^2_{H^1(\bbS^{n-1})} = \|M_L\|^2_{L^2(\bbS^{n-1})} + \left\| \sum_{l\leq L}  \hat V_{\varepsilon, l} \nabla Z_l(x_0, \cdot)\right\|^2_{L^2(T\bbS^{n-1})} 
    \\
    &= \|M_L\|^2_{L^2(\bbS^{n-1})} + \sum_{l\leq L} \hat V_{\varepsilon, l}^2 \left\|\sum_{k\in \bbK_l} Y_{l, k}(x_0)\nabla_x Y_{l, k}(x)\right\|_{L^2(T\bbS^{n-1})}^2 \\
    &=\|M_L\|^2_{L^2(\bbS^{n-1})} - \sum_{l\leq L} \hat V_{\varepsilon, l}^2 \lambda_l \sum_{k\in \bbK_l} Y_{l, k}(x_0)^2\left\|\nabla_x Y_{l, k}(x)\right\|_{L^2(T\bbS^{n-1})}^2 \\
    &= \|M_L\|^2_{L^2(\bbS^{n-1})} - \sum_{l\leq L} \lambda_l \hat V_{\varepsilon, l}^2 \|Z_l(x_0, x)\|_{L^2}^2.
    \end{aligned}
    \]
   Using Proposition \ref{prop:zonal}, for every finite $L$ we thus calculate
    \[
    \begin{aligned}
    \MoveEqLeft\|M_L\|^2_{H^1(\bbS^{n-1})} = \sum_{l\leq L}\hat V_{\varepsilon, l} \frac{(2l +n-2)^2}{(n-2)^2} \|C_l^{\frac{n-2}{2}}\|_{L^2}^2 + \sum_{l\leq L}\hat V_{\varepsilon, l} \frac{l(2l +n-2)^3}{(n-2)^2} \|C_l^{\frac{n-2}{2}}\|_{L^2}^2 \\
    &\lesssim \sum_{l\leq L}l^4\hat V_{\varepsilon, l} \|C_l^{\frac{n-2}{2}}\|_{L^2}^2,
    \end{aligned}
    \]
    and since the norm of the Gegenbauer polynomials satisfies 
    \[
    \|C_l^{\frac{n-2}{2}}\|_{L^2}^2 \lesssim \frac{\Gamma(l +n-2)}{l!(l+ (n-2)/2)}\lesssim  l^{n-4},
    \]
    e.g. see \cite[p.774]{abramowitz1968handbook}, we conclude that
    \[
    \|M_L\|^2_{H^1(\bbS^{n-1})} \lesssim \sum_{l\leq L}l^n\hat V_{\varepsilon, l} \leq \sum_{l}l^n\hat V_{\varepsilon, l} < \infty,
    \]
    by Assumption \ref{ass:repulsive} and thus,  $\sqrt[*]{V_\varepsilon} \in H^1$. 
\end{proof}
Under a stronger integrability assumption on the interaction kernel it is possible to define the convolution operator on the space of probability measures.
\begin{proposition}[Measure convolution] 
\label{prop:measure-conv}
Let $V$ be a zonal kernel, for any $x_0 \in \bbS^{n-1}$ satisfying 
\[
\|V(x_0, \cdot)\|^2_{L^2(\bbS^{n-1})} = \int_{\bbS^{n-1}} V(x_0, \cdot)^2 d\sigma < \infty.
\]
For any $\rho\in \calP(\bbS^{n-1})$ denote its coefficients in the basis of spherical harmonics by
\[
\alpha_{l, k} =\int Y_{l, k}(x)d\rho(x),
\]
then the measure convolution satisfies $V * \rho \in L^2(\bbS^{n-1})$ and takes the form
\[
V* \rho  = \sum_{l, k}\hat V_{ l}\alpha_{l, k}Y_{l, k}.
\]
\end{proposition}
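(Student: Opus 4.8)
The plan is to give meaning to the measure convolution by \emph{duality} and then read off its spherical harmonics coefficients using the convolution theorem. First I fix notation: for a zonal kernel $V=v(\langle\cdot,\cdot\rangle)$, rotational invariance of $\sigma$ gives $\|V(x,\cdot)\|_{L^2(\bbS^{n-1})}=\|V(x_0,\cdot)\|_{L^2(\bbS^{n-1})}=:\kappa$ for every $x$, and moreover $x\mapsto V(x,\cdot)$ is continuous from $\bbS^{n-1}$ into $L^2(\bbS^{n-1})$ (rotations act strongly continuously on $L^2$). Consequently, for any $\phi\in L^2(\bbS^{n-1})$ the function $x\mapsto(V*\phi)(x):=\langle V(x,\cdot),\phi\rangle_{L^2}$ is continuous on $\bbS^{n-1}$ with $\|V*\phi\|_{L^\infty}\le\kappa\|\phi\|_{L^2}$, so the linear map $\phi\mapsto\int_{\bbS^{n-1}}(V*\phi)\,d\rho$ is bounded on $L^2(\bbS^{n-1})$ with operator norm at most $\kappa$ (here we use that $\rho$ is a probability measure). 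By the Riesz representation theorem there is a unique element of $L^2(\bbS^{n-1})$, which we \emph{define} to be $V*\rho$, satisfying
\[
\langle V*\rho,\phi\rangle_{L^2}=\int_{\bbS^{n-1}}(V*\phi)\,d\rho\qquad\text{for all }\phi\in L^2(\bbS^{n-1}),
\]
and $\|V*\rho\|_{L^2}\le\kappa<\infty$. When $\rho$ itself lies in $L^2(\bbS^{n-1})$ this agrees with Definition~\ref{def:conv-sn}, since then $\int(V*\phi)\,d\rho=\langle V*\phi,\rho\rangle=\langle\phi,V*\rho\rangle$ by symmetry of the zonal kernel.

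Next I compute the coefficients. Taking $\phi=Y_{l,k}$ and using Theorem~\ref{cor:convolution-theorem} (which, applied to $f=Y_{l',k'}$, identifies $V*Y_{l',k'}=\hat V_{l'}Y_{l',k'}$, a continuous function by the previous paragraph), we obtain
\[
\langle V*\rho,Y_{l,k}\rangle_{L^2}=\int_{\bbS^{n-1}}(V*Y_{l,k})\,d\rho=\hat V_l\int_{\bbS^{n-1}}Y_{l,k}\,d\rho=\hat V_l\,\alpha_{l,k}.
\]
Since $\calY$ is an orthonormal basis of $L^2(\bbS^{n-1})$ (Theorem~\ref{th:sph-decomposition}), this identifies $V*\rho=\sum_{l,k}\hat V_l\alpha_{l,k}Y_{l,k}$, which is the claimed formula, and yields $\|V*\rho\|_{L^2}^2=\sum_{l,k}\hat V_l^2\alpha_{l,k}^2$. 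For reassurance this sum can also be bounded directly: the addition formula~\eqref{eq:zonal} and Fubini give $\sum_{k\in\bbK_l}\alpha_{l,k}^2=\iint Z_l(x,y)\,d\rho(x)\,d\rho(y)\le\sup_{x,y}|Z_l(x,y)|=Z_l(x_0,x_0)$, where we used $|Z_l(x,y)|\le\sqrt{Z_l(x,x)Z_l(y,y)}$; combined with $\kappa^2=\sum_l\hat V_l^2\,Z_l(x_0,x_0)$ (Lemma~\ref{lemma:gegegnbauer-decomposition} together with orthogonality of the zonal harmonics), this reproves $\sum_{l,k}\hat V_l^2\alpha_{l,k}^2\le\kappa^2$.

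The argument is essentially bookkeeping once the dual definition is in place; the one point that genuinely needs attention is that the convolution of a merely $L^2$ kernel with a possibly singular probability measure cannot be evaluated pointwise, which is what forces the duality formulation and the verification that $V*\phi\in C(\bbS^{n-1})$ (hence $\rho$-integrable) rather than just $L^2(\bbS^{n-1})$. An alternative, slightly longer route avoids Riesz by approximating $\rho$ weakly by smooth probability densities $\rho_m$, applying Theorem~\ref{cor:convolution-theorem} to each $V*\rho_m$, and passing to the $L^2$-limit via the uniform-in-$m$ summability bound $\sum_k(\alpha^m_{l,k})^2\le Z_l(x_0,x_0)$ and dominated convergence for series; this produces the same $V*\rho$.
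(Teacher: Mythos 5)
Your proof is correct, but it takes a genuinely different route from the paper, and one identification step is left implicit. The paper works directly with the pointwise definition $(V*\rho)(x)=\int V(\langle x,y\rangle)\,d\rho(y)$: since $\rho$ is a probability measure, Jensen's inequality together with Tonelli gives
\[
\int_{\bbS^{n-1}}\Bigl(\int V(\langle x,y\rangle)\,d\rho(y)\Bigr)^{2} d\sigma(x)\;\le\;\iint V(\langle x,y\rangle)^{2}\,d\rho(y)\,d\sigma(x)\;=\;\|V(x_0,\cdot)\|^{2}_{L^2(\bbS^{n-1})}<\infty,
\]
so the integral is defined for $\sigma$-a.e.\ $x$, lies in $L^2$, and the coefficients are then computed by Fubini exactly as in your second paragraph. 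Your construction instead defines $V*\rho$ by duality via Riesz (using that $x\mapsto V(x,\cdot)$ is continuous into $L^2$, hence $V*\phi\in C(\bbS^{n-1})$), which cleanly produces an $L^2$ element with norm at most $\kappa$, and your coefficient computation and the supplementary bound $\sum_{l,k}\hat V_l^{2}\alpha_{l,k}^{2}\le\kappa^{2}$ are correct. The caveat: the proposition concerns the measure convolution as the paper defines it, namely the pointwise integral against $\rho$, and you only verify that your Riesz representative coincides with it when $\rho\in L^2$. For a general, possibly singular $\rho$ the identification requires one Fubini--Tonelli step: for $\phi\in L^2$ one has $\iint|V(x,y)\phi(x)|\,d\sigma(x)\,d\rho(y)\le\kappa\|\phi\|_{L^2}$, hence $\int(V*\phi)\,d\rho=\int\phi(x)\bigl(\int V(x,y)\,d\rho(y)\bigr)d\sigma(x)$, i.e.\ your dual object equals the pointwise convolution $\sigma$-a.e. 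Relatedly, your motivating claim that an $L^2$ kernel convolved with a singular measure ``cannot be evaluated pointwise'' overstates the difficulty: the Jensen/Tonelli bound shows it is defined $\sigma$-a.e., which is all the paper needs, so the duality detour is a nice alternative (it makes the uniform bound $\|V*\rho\|_{L^2}\le\kappa$ immediate) but is not forced.
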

\begin{proof}
    To verify that $V * \rho \in L^2(\bbS^{n-1})$ for arbitrary $\rho$ and $\varepsilon$ note that by Jensen's inequality we get
\[
\begin{aligned}
\MoveEqLeft\int \left(\int V(\left<x, y\right>) d\rho(y)\right)^2 d\sigma(x) \leq \int \int\left(V(\left<x, y\right>)\right)^2d\rho(y) d\sigma(x) \\
&=\int \|V(y, \cdot)\|^2_{L^2(\bbS^{n-1})} d\rho(y) = \|V(y, \cdot)\|^2_{L^2(\bbS^{n-1})} < \infty,
\end{aligned}
\]
Since $V*\rho \in L^2(\bbS^{n-1})$ it is equal to its decomposition in the basis of spherical harmonics and the coefficients are
    \[
    \begin{aligned}
    \label{eq:representation}
    (V*\rho)_{l, k} &= \int \int V(\left<x, y\right>)*\rho(y)d\sigma(y) Y_{l, k}(x) d\sigma(x)\\
    &= \int \hat V_{l} Y_{l, k}(y)\rho(y)d\sigma(y) =\hat V_{l}  \int Y_{l, k}d\rho =\hat V_{ l} \alpha_{l, k},
    \end{aligned}
    \]
    hence the result.
\end{proof}

According to Remark~\ref{remark:kernel-by-decomp} for every $\varepsilon \in \bbR_+$, the sequence $\left(\sqrt{\hat V_{\varepsilon, l}}\right)_{l\in\bbN}$ defines a kernel which we denote by $\sqrt[*]{V_\varepsilon}$. As a result, we can also define the measure convolution operator with $\sqrt[*]{V_\varepsilon}$. In particular, as follows from the Proposition \ref{prop:measure-conv}, the convolution $\sqrt[*]{V_\varepsilon} * \rho$ is well-defined for every $\varepsilon \in\bbR_+$ and $\rho\in \calP(\bbS^{n-1})$ and admits the following form.

\begin{corollary}[Measure convolution with the square root] 
\label{prop:conv-root} 
Let $(V_\varepsilon)_{\varepsilon\in\bbR_+}$ be a family of kernels satisfying the localization Assumption \ref{ass:repulsive}, then for arbitrary $\varepsilon > 0$, $\sqrt[*]{V_\varepsilon} * \rho \in L^2(\bbS^{n-1})$ and takes the form
\[
\sqrt[*]{V_\varepsilon} * \rho  = \sum_{l, k}\sqrt{\hat V_{\varepsilon, l}} \ \alpha_{l, k}Y_{l, k}.
\]  
\end{corollary}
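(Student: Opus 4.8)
The plan is to obtain Corollary~\ref{prop:conv-root} as an immediate consequence of Proposition~\ref{prop:measure-conv} applied to the kernel $V = \sqrt[*]{V_\varepsilon}$. The only nontrivial point is to check that this kernel satisfies the hypothesis of Proposition~\ref{prop:measure-conv}, namely that $\|\sqrt[*]{V_\varepsilon}(x_0,\cdot)\|_{L^2(\bbS^{n-1})}^2 < \infty$ for some (equivalently, every) $x_0 \in \bbS^{n-1}$. Once this is established, Proposition~\ref{prop:measure-conv} gives both the membership $\sqrt[*]{V_\varepsilon}*\rho \in L^2(\bbS^{n-1})$ and the stated formula for its spherical harmonic coefficients, since the spherical harmonics decomposition of $\sqrt[*]{V_\varepsilon}$ is by construction (Remark~\ref{remark:kernel-by-decomp} and \eqref{eq:conv-root-series}) the sequence $(\sqrt{\hat V_{\varepsilon,l}})_{l\in\bbN}$.

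First I would recall that by Lemma~\ref{lem:conv-sq-root} we have $\sqrt[*]{V_\varepsilon} \in H^1(\bbS^{n-1}\times\bbS^{n-1})$ for every $\varepsilon>0$, which in particular gives $\sqrt[*]{V_\varepsilon}(x_0,\cdot) \in L^2(\bbS^{n-1})$; alternatively, one can argue directly from the decomposition, using that $\|\sqrt[*]{V_\varepsilon}(x_0,\cdot)\|_{L^2}^2 = \sum_l \hat V_{\varepsilon,l}\, Z_l(x_0,x_0) \lesssim \sum_l l^{n-2}\hat V_{\varepsilon,l} < \infty$ by Assumption~\ref{ass:repulsive} (using $Z_l(x_0,x_0)\lesssim l^{n-2}$, which follows from Proposition~\ref{prop:zonal} and the growth bound on the Gegenbauer polynomials). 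Either way the $L^2$-integrability hypothesis of Proposition~\ref{prop:measure-conv} is verified.

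Then I would simply invoke Proposition~\ref{prop:measure-conv} with $V \coloneqq \sqrt[*]{V_\varepsilon}$, whose coefficients $\widehat{(\sqrt[*]{V_\varepsilon})}_l$ are exactly $\sqrt{\hat V_{\varepsilon,l}}$: this yields $\sqrt[*]{V_\varepsilon}*\rho = \sum_{l,k}\sqrt{\hat V_{\varepsilon,l}}\,\alpha_{l,k}Y_{l,k} \in L^2(\bbS^{n-1})$, where $\alpha_{l,k} = \int Y_{l,k}\,d\rho$, which is precisely the claim. I do not expect any real obstacle here: the statement is a corollary in the strict sense, and the only thing requiring a line of justification is the $L^2$ bound on the kernel $\sqrt[*]{V_\varepsilon}$, which is already contained in (or immediately derivable from) the proof of Lemma~\ref{lem:conv-sq-root}.
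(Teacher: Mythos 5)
Your proposal is correct and matches the paper's own (implicit) argument: the paper derives this corollary precisely by applying Proposition~\ref{prop:measure-conv} to the zonal kernel $\sqrt[*]{V_\varepsilon}$, whose spherical harmonics decomposition is $(\sqrt{\hat V_{\varepsilon,l}})_l$ by construction. Your verification of the $L^2$ hypothesis on $\sqrt[*]{V_\varepsilon}(x_0,\cdot)$ (via Lemma~\ref{lem:conv-sq-root} or directly from $Z_l(x_0,x_0)\lesssim l^{n-2}$ and $\sum_l l^n\hat V_{\varepsilon,l}<\infty$) is sound and merely makes explicit a step the paper leaves to the reader.
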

In addition if $\rho \in\calP(\bbS^{n-1})\cap L^2(\bbS^{n-1})$, we get the following properties.
\begin{lemma}[Weak convergence to a delta kernel]
\label{lemma:delta}
    Let $(V_\varepsilon)_{\varepsilon \in \bbR_+}$ be a family of interaction kernels satisfying the localization Assumption \ref{ass:repulsive}. Then for any $u \in L^2(\bbS^{n-1})$ the following convergence holds
    \[
    \|u - \sqrt[*]{V_\varepsilon} * u\|_{L^2(\bbS^{n-1})} \to 0, \quad \text{as } \varepsilon \to 0.
    \]
\end{lemma}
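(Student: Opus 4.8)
The plan is to diagonalize everything in the spherical harmonics basis and reduce the statement to a dominated-convergence argument for series. First I would expand $u=\sum_{l\in\bbN_0}\sum_{k\in\bbK_l}\alpha_{l,k}Y_{l,k}$, which is legitimate by Theorem~\ref{th:sph-decomposition}, with $\|u\|_{L^2(\bbS^{n-1})}^2=\sum_{l,k}\alpha_{l,k}^2<\infty$. Since $\sqrt[*]{V_\varepsilon}\in H^1(\bbS^{n-1}\times\bbS^{n-1})\subset L^1$ by Lemma~\ref{lem:conv-sq-root} and compactness of the sphere, the convolution $\sqrt[*]{V_\varepsilon}*u$ is well-defined, and the convolution theorem (Theorem~\ref{cor:convolution-theorem}), together with the fact that $\sqrt[*]{V_\varepsilon}$ has spherical harmonics decomposition $(\sqrt{\hat V_{\varepsilon,l}})_{l\in\bbN}$, gives
\[
\sqrt[*]{V_\varepsilon}*u=\sum_{l\in\bbN_0}\sum_{k\in\bbK_l}\sqrt{\hat V_{\varepsilon,l}}\,\alpha_{l,k}Y_{l,k}.
\]

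By Parseval's identity for the orthonormal basis $\calY$ this yields the exact expression
\[
\bigl\|u-\sqrt[*]{V_\varepsilon}*u\bigr\|_{L^2(\bbS^{n-1})}^2=\sum_{l\in\bbN_0}\sum_{k\in\bbK_l}\bigl(1-\sqrt{\hat V_{\varepsilon,l}}\bigr)^2\alpha_{l,k}^2.
\]
Now I would invoke the two structural facts from Assumption~\ref{ass:repulsive}. The uniform bound $0\le\hat V_{\varepsilon,l}\le C$ (for all $\varepsilon>0$ and all $l$) implies $\bigl(1-\sqrt{\hat V_{\varepsilon,l}}\bigr)^2\le(1+\sqrt C)^2$, so each summand is dominated by $(1+\sqrt C)^2\alpha_{l,k}^2$, a summable sequence independent of $\varepsilon$; and the pointwise convergence $\hat V_{\varepsilon,l}\to1$ as $\varepsilon\to0$ gives $\bigl(1-\sqrt{\hat V_{\varepsilon,l}}\bigr)^2\to0$ for each fixed pair $(l,k)$. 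The dominated convergence theorem applied to the counting measure on $\{(l,k):l\in\bbN_0,\ k\in\bbK_l\}$ then lets me exchange limit and sum, so the right-hand side tends to $0$, which is the claim.

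There is essentially no serious obstacle here; the only point that requires a little care is that $\sqrt[*]{V_\varepsilon}*u$ must be defined and have the stated Fourier coefficients for a general $u\in L^2(\bbS^{n-1})$, not merely for probability measures as in Corollary~\ref{prop:conv-root} — but this is covered by the $H^1$-regularity of $\sqrt[*]{V_\varepsilon}$ from Lemma~\ref{lem:conv-sq-root} together with Theorem~\ref{cor:convolution-theorem}. Alternatively, one can avoid this point entirely by phrasing the whole argument at the level of the self-adjoint multiplier operator on $\ell^2$ with symbol $(\sqrt{\hat V_{\varepsilon,l}})_{l}$, acting on the coefficient sequence $(\alpha_{l,k})$; the uniform bound then makes the family of operators uniformly bounded in $\varepsilon$, and strong convergence to the identity follows from pointwise convergence of the symbols plus density, i.e.\ again dominated convergence.
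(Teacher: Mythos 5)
Your proof is correct and follows essentially the same route as the paper: expand $u$ in spherical harmonics, use the diagonal action of the convolution to write $\|u-\sqrt[*]{V_\varepsilon}*u\|_{L^2}^2=\sum_{l,k}\bigl(1-\sqrt{\hat V_{\varepsilon,l}}\bigr)^2\alpha_{l,k}^2$, and conclude by dominated convergence using the uniform bound and pointwise convergence of $\hat V_{\varepsilon,l}$ from Assumption~\ref{ass:repulsive}. The extra care you take in justifying the Fourier coefficients of the convolution for general $u\in L^2$ is a welcome addition but does not change the argument.
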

\begin{proof}
    By Assumption \ref{ass:repulsive}, every component of the spherical harmonics decomposition $\hat V_{\varepsilon, l}
    $ converges to $1$, implying that the same holds for the square root, namely $\sqrt{\hat V_{\varepsilon, l}} \to 1$. Analogously we conclude that $\sqrt{\hat V_{\varepsilon, l} }$ are uniformly bounded. Thus, expanding the definition of the convolution, we conclude that
    \[
    \begin{aligned}
     \MoveEqLeft \left\|u -  \sqrt[*]{V_\varepsilon} * u\right\|^2_{L^2(\bbS^{n-1})} =   \left\|\sum_{l, k}\left(1-\sqrt{\hat V_{\varepsilon,l }}\right) \alpha_{l, k}Y_{l, k} \right\|^2_{L^2(\bbS^{n-1})}\\
     &= \sum_{l, k}\left(1-\sqrt{\hat V_{\varepsilon,l }}\right)^2 \alpha_{l, k}^2\to 0,
    \end{aligned}
    \]
    by the dominated convergence theorem.
\end{proof}

\begin{lemma}[Gradient estimate]
\label{lemma:gradient}
 Let $(V_\varepsilon)_{\varepsilon \in \bbR_+}$ be a family of interaction kernels satisfying the localization Assumption \ref{ass:repulsive}. Then for any $\varepsilon >0$ and any $u \in L^2(\bbS^{n-1})$ the convolution $\sqrt[*]{V_\varepsilon} * u =: v_{u, \varepsilon}$ is an element of $H^1(\bbS^{n-1})$, and the (weak) gradient of $v_{u, \varepsilon}$ admits the form
    \begin{equation}
      \label{eq:grad} 
    \nabla v_{u, \varepsilon} = \sum_{l, k} \sqrt{\hat V_{\varepsilon, l}}\alpha_{l, k} \nabla Y_{l, k},
    \end{equation}
    where $\alpha_{l, k}$ are the coefficients of the decomposition of $u$ into the spherical harmonics basis, namely $u = \sum_{l, k}\alpha_{l, k} Y_{l, k}$.
\end{lemma}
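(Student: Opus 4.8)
The plan is to work directly with the spherical-harmonics expansion of $v_{u,\varepsilon}$, write down its formal term-by-term gradient, prove that this series converges in $L^2(T\bbS^{n-1})$, and then identify the limit with the weak gradient of $v_{u,\varepsilon}$ by an approximation argument.

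First I would record the coefficients of $v_{u,\varepsilon}$. Since $\sqrt[*]{V_\varepsilon}$ is a zonal kernel (Remark~\ref{remark:kernel-by-decomp}) lying in $H^1(\bbS^{n-1}\times\bbS^{n-1})\subset L^2$ by Lemma~\ref{lem:conv-sq-root}, it satisfies the integrability condition~\eqref{eq:spherical:intergrability}, so Definition~\ref{def:conv-sn} and the convolution theorem (Theorem~\ref{cor:convolution-theorem}) apply with $f=u$. Together with $\widehat{(\sqrt[*]{V_\varepsilon})}_l=\sqrt{\hat V_{\varepsilon,l}}$, this gives $\langle v_{u,\varepsilon},Y_{l,k}\rangle=\sqrt{\hat V_{\varepsilon,l}}\,\alpha_{l,k}$, hence $v_{u,\varepsilon}=\sum_{l,k}\sqrt{\hat V_{\varepsilon,l}}\,\alpha_{l,k}Y_{l,k}$ with convergence in $L^2(\bbS^{n-1})$. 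I would then set $S_N:=\sum_{l\le N}\sum_{k\in\bbK_l}\sqrt{\hat V_{\varepsilon,l}}\,\alpha_{l,k}Y_{l,k}$; each $S_N$ is a finite linear combination of smooth spherical harmonics, hence smooth, with (classical = weak) gradient $\nabla S_N=\sum_{l\le N,k}\sqrt{\hat V_{\varepsilon,l}}\,\alpha_{l,k}\nabla Y_{l,k}$, and $S_N\to v_{u,\varepsilon}$ in $L^2(\bbS^{n-1})$.

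Next I would show that $(\nabla S_N)_N$ is Cauchy in $L^2(T\bbS^{n-1})$. By the orthogonality relation~\eqref{eq:orthogonality} one has $\langle\nabla Y_{l,k},\nabla Y_{l',k'}\rangle=-\lambda_l\delta_{ll'}\delta_{kk'}$ with $-\lambda_l=l(l+n-2)\ge0$, so for $N<M$, $\|\nabla S_M-\nabla S_N\|_{L^2(T\bbS^{n-1})}^2=\sum_{N<l\le M}\sum_{k\in\bbK_l}(-\lambda_l)\,\hat V_{\varepsilon,l}\,\alpha_{l,k}^2$. Since $-\lambda_l\le(n-1)l^n$ for $l\ge1$ and $\sum_l l^n\hat V_{\varepsilon,l}<\infty$ by Assumption~\ref{ass:repulsive}, the constant $C_\varepsilon:=\sup_{l}(-\lambda_l)\hat V_{\varepsilon,l}$ is finite; hence the tail above is bounded by $C_\varepsilon\sum_{l>N,k}\alpha_{l,k}^2$, which tends to $0$ because $\sum_{l,k}\alpha_{l,k}^2=\|u\|_{L^2(\bbS^{n-1})}^2<\infty$. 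Thus $\nabla S_N\to g$ in $L^2(T\bbS^{n-1})$ for some $g$, which by construction equals $\sum_{l,k}\sqrt{\hat V_{\varepsilon,l}}\,\alpha_{l,k}\nabla Y_{l,k}$, and moreover $\|g\|_{L^2(T\bbS^{n-1})}^2\le C_\varepsilon\|u\|_{L^2(\bbS^{n-1})}^2$.

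Finally I would pass to the limit in the defining identity for the weak gradient: for every smooth vector field $X$ on $\bbS^{n-1}$, $\int_{\bbS^{n-1}}\langle\nabla S_N,X\rangle\,d\sigma=-\int_{\bbS^{n-1}}S_N\,(\nabla\cdot X)\,d\sigma$, and letting $N\to\infty$ using $S_N\to v_{u,\varepsilon}$ in $L^2(\bbS^{n-1})$ and $\nabla S_N\to g$ in $L^2(T\bbS^{n-1})$ yields $\int\langle g,X\rangle\,d\sigma=-\int v_{u,\varepsilon}\,(\nabla\cdot X)\,d\sigma$. Hence $v_{u,\varepsilon}\in H^1(\bbS^{n-1})$ with $\nabla v_{u,\varepsilon}=g$, which is exactly~\eqref{eq:grad}; equivalently one may invoke the closedness of $\nabla\colon L^2(\bbS^{n-1})\to L^2(T\bbS^{n-1})$ with domain $H^1$. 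The only genuinely delicate point is this last identification step, i.e.\ checking that the $L^2(T\bbS^{n-1})$-limit of $\nabla S_N$ is the weak gradient of the $L^2$-limit $v_{u,\varepsilon}$ rather than merely some auxiliary field; this is routine given the closedness/density argument, while everything else is bookkeeping with the summability bounds above.
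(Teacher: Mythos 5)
Your proposal is correct and follows essentially the same route as the paper: expand $v_{u,\varepsilon}$ in spherical harmonics, use the orthogonality relation~\eqref{eq:orthogonality} together with $\sum_l l^n\hat V_{\varepsilon,l}<\infty$ (giving $\sup_l(-\lambda_l)\hat V_{\varepsilon,l}<\infty$) to show the gradient series of the smooth partial sums converges in $L^2(T\bbS^{n-1})$, and then conclude $v_{u,\varepsilon}\in H^1(\bbS^{n-1})$ with gradient~\eqref{eq:grad} by approximation. Your final identification of the limit as the weak gradient via integration by parts against smooth vector fields is just a slightly more explicit version of the paper's statement that $v_{u,\varepsilon}$ lies in the $H^1$-closure of $C^\infty(\bbS^{n-1})$.
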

\begin{proof}
As follows from \eqref{eq:orthogonality}, the gradients of different spherical harmonics are orthogonal in $L^2(T\bbS^{n-1})$. As a result, we conclude that the series 
\[
\sum_{l, k} \sqrt{\hat V_{\varepsilon, l}}\alpha_{l, k} \nabla Y_{l, k}
\]
converge in $L^2(T\bbS^{n-1})$ if and only if
\[
\sum_{l, k} \hat V_{\varepsilon, l}\alpha^2_{l, k} \|\nabla Y_{l, k}\|^2_{L^2(T\bbS^{n-1})} <\infty.
\]
By Assumption \ref{ass:repulsive} the coefficients $\hat V_{\varepsilon, l}$ satisfy $\hat V_{\varepsilon, l} = O(1/l^n)$ as $l\to \infty$, and thus $\hat V_{\varepsilon, l} \|\nabla Y_{l, k}\|^2_{L^2(T\bbS^{n-1})} = \lambda_l\hat V_{\varepsilon, l}= O(l^{2-n}) = O(1)$  for arbitrary $n\geq 2$. As a result, we conclude that for arbitrary $u\in L^2(\bbS^{n-1})$ it holds that
\begin{equation}
\label{eq:l1210}
\sum_{l, k} \hat V_{\varepsilon, l}\alpha^2_{l, k} \|\nabla Y_{l, k}\|^2_{L^2(T\bbS^{n-1})} \leq C\sum_{l, k} \alpha^2_{l, k} <\infty.
\end{equation}

    Given $u\in L^2(\bbS^{n-1})$, consider the approximating sequence $\phi_j := \sum_{l\leq j, k} \sqrt{\hat V_{\varepsilon, l}}\alpha_{l, k} Y_{l, k}$ for $j \in \bbN$ and note that $\phi_j \in C^\infty$. Estimating the $H^1$ norm we obtain
    \[
    \begin{aligned}
    \MoveEqLeft \|v_{u, \varepsilon} - \phi_j\|_{H^1(\bbS^{n-1})} = \|v_{u, \varepsilon} - \phi_j\|_{L^2(\bbS^{n-1})} + \left\|\sum_{l, k} \sqrt{\hat V_{\varepsilon, l}}\alpha_{l, k} \nabla Y_{l, k} - \sum_{l\leq j, k} {\hat V_{\varepsilon, l}}\alpha_{l, k} \nabla Y_{l, k}\right\|_{L^2(T\bbS^{n-1})}\\
    &=\left(\sum_{l> j, k}\hat V_{\varepsilon, l}\alpha_{l, k}^2\right)^{1/2} + \left(\sum_{l>j, k}\alpha_{l, k}^2\hat V_{\varepsilon, l}\|\nabla Y_{l, k}\|^2_{L^2(T\bbS^{n-1})}\right)^{1/2}:= I + II.
    \end{aligned}
    \]
    Since $\hat V_{\varepsilon, l}$ are uniformly bounded and $u\in L^2(\bbS^{n-1})$ we conclude that $I \to 0$. Analogously, the estimate \eqref{eq:l1210} guarantees that $II \to 0$ as the tail of convergent series. 
    Hence, $v_{u, \varepsilon} \in \overline{C^\infty(\bbS^{n-1})}^{H_1}$ and its gradient $\nabla v_{u, \varepsilon}$ obtains the series representation of form~\eqref{eq:grad}.
\end{proof}

As a result, we obtain the key relation of this work, which can be interpreted as a (very) weak form of the integration by parts formula on the sphere.
\begin{corollary}
\label{cor:flow-interchange}
    Under assumptions of Lemma \ref{lemma:gradient}, for any $u \in C^2(\bbS^{n-1})$ it holds that
    \[
    \|\nabla \sqrt[*]{V_\varepsilon} * u \|^2_{L^2(T\bbS^{n-1})} = - \left<V_\varepsilon * u, \Delta u\right> = -\sum_{l, k}\lambda_l\hat V_{\varepsilon, l} \alpha_{l, k}.
    \]
\end{corollary}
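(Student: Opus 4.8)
The plan is to read off everything from the series representations already established: Lemma~\ref{lemma:gradient} for the gradient of $v_{u,\varepsilon}:=\sqrt[*]{V_\varepsilon}*u$, the orthogonality identity~\eqref{eq:orthogonality} for the spherical harmonics and their gradients, and the self-adjointness of the Laplace--Beltrami operator on the closed manifold $\bbS^{n-1}$. Write $u=\sum_{l,k}\alpha_{l,k}Y_{l,k}$. By Lemma~\ref{lemma:gradient}, $v_{u,\varepsilon}\in H^1(\bbS^{n-1})$ and $\nabla v_{u,\varepsilon}=\sum_{l,k}\sqrt{\hat V_{\varepsilon,l}}\,\alpha_{l,k}\nabla Y_{l,k}$, the series converging in $L^2(T\bbS^{n-1})$.

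First I would compute the left-hand side. By~\eqref{eq:orthogonality} the vector fields $\nabla Y_{l,k}$ are pairwise orthogonal in $L^2(T\bbS^{n-1})$ with $\|\nabla Y_{l,k}\|^2_{L^2(T\bbS^{n-1})}=-\left<Y_{l,k},\Delta Y_{l,k}\right>=-\lambda_l$, so expanding the square of the convergent series gives
\[
\|\nabla v_{u,\varepsilon}\|^2_{L^2(T\bbS^{n-1})}=\sum_{l,k}\hat V_{\varepsilon,l}\,\alpha_{l,k}^2\,\|\nabla Y_{l,k}\|^2_{L^2(T\bbS^{n-1})}=-\sum_{l,k}\lambda_l\,\hat V_{\varepsilon,l}\,\alpha_{l,k}^2 .
\]
This series converges absolutely: as in the proof of Lemma~\ref{lemma:gradient}, Assumption~\ref{ass:repulsive} forces $\hat V_{\varepsilon,l}=O(l^{-n})$, hence $|\lambda_l|\hat V_{\varepsilon,l}=O(l^{2-n})=O(1)$ for $n\geq2$, so the sum is dominated by $C\sum_{l,k}\alpha_{l,k}^2=C\|u\|^2_{L^2}<\infty$.

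Next I would identify the middle term. Since $u\in C^2(\bbS^{n-1})$, the function $\Delta u$ is continuous, hence in $L^2(\bbS^{n-1})$, and self-adjointness of $\Delta$ on the boundaryless manifold $\bbS^{n-1}$ together with $\Delta Y_{l,k}=\lambda_l Y_{l,k}$ gives $\left<\Delta u,Y_{l,k}\right>=\left<u,\Delta Y_{l,k}\right>=\lambda_l\alpha_{l,k}$; thus $\Delta u=\sum_{l,k}\lambda_l\alpha_{l,k}Y_{l,k}$ in $L^2$. On the other hand, by the convolution theorem (Theorem~\ref{cor:convolution-theorem}, or Proposition~\ref{prop:measure-conv}) one has $V_\varepsilon*u=\sum_{l,k}\hat V_{\varepsilon,l}\alpha_{l,k}Y_{l,k}$ in $L^2$. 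Applying Parseval's identity to these two $L^2$ expansions,
\[
\left<V_\varepsilon*u,\Delta u\right>=\sum_{l,k}\hat V_{\varepsilon,l}\,\alpha_{l,k}\cdot\lambda_l\alpha_{l,k}=\sum_{l,k}\lambda_l\,\hat V_{\varepsilon,l}\,\alpha_{l,k}^2 ,
\]
the rearrangement being legitimate by the absolute convergence established above. Combining the two displays yields the claimed chain of equalities.

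I do not expect a serious obstacle here; the two points that need care are (i) the identity $\|\nabla Y_{l,k}\|^2_{L^2(T\bbS^{n-1})}=-\lambda_l$ for $L^2$-normalized $Y_{l,k}$, which is immediate from~\eqref{eq:orthogonality}, and (ii) the integration-by-parts step $\left<\Delta u,Y_{l,k}\right>=\lambda_l\alpha_{l,k}$, which is precisely where the hypothesis $u\in C^2$ (rather than merely $u\in L^2$) is used and which relies on $\bbS^{n-1}$ having no boundary. Everything else is bookkeeping with the absolutely convergent series $\sum_{l,k}|\lambda_l|\hat V_{\varepsilon,l}\alpha_{l,k}^2$.
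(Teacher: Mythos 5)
Your proof is correct and follows essentially the same route as the paper: Lemma \ref{lemma:gradient} plus the orthogonality relation \eqref{eq:orthogonality} for the left-hand side, and the spectral expansions of $\Delta u$ and $V_\varepsilon * u$ combined with Parseval for the middle term. (Note your final expression $\sum_{l,k}\lambda_l\hat V_{\varepsilon,l}\alpha_{l,k}^2$ matches the paper's own computation; the $\alpha_{l,k}$ without the square in the corollary statement is a typo there.)
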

\begin{proof}
    Due to the Lemma \ref{lemma:gradient}, we only need to show
    \[
    - \left<V_\varepsilon * u, \Delta u\right> = -\sum_{l, k}\lambda_l \hat V_{\varepsilon, l}\alpha_{l, k}
    \]
    for arbitrary $u\in C^2(\bbS^{n-1})$. We argue analogously to the proof of Lemma \ref{lem:conv-sq-root}. In particular, \eqref{eq:orthogonality} implies that for any $u\in C^2(\bbS^{n-1})$ it holds that
\[
\Delta u = \sum_{l, k} \alpha_{l, k}\Delta Y_{l, k} = \sum_{l, k} \lambda_{l}\alpha_{l, k} Y_{l, k}.
\]
Calculation of the scalar product $\left< V_\varepsilon*u , \Delta u\right>_{L^2(\bbS^{n-1})}$ thus gives
\begin{align*}
    \left< V_\varepsilon*u , \Delta u\right> &= \left< \sum_{l, k} \hat V_{\varepsilon,l} \alpha_{l, k} Y_{l, k}, \sum_{l', k'}\lambda_{l'}\alpha_{l', k'} Y_{l', k'}\right> \notag \\
    &= \sum_{l, k} \sum_{l', k'} \delta_{l, l'}\delta_{k, k'} \hat V_{\varepsilon,l}\lambda_{l'}\alpha_{l, k}\alpha_{l', k'} = \sum_{l, k}  \lambda_{l}\hat V_{\varepsilon,l}\alpha_{l, k}^2. 
\end{align*}
which completes the proof.

\end{proof}
If the family of localized interaction kernels $(V_\varepsilon)_{\varepsilon>0}$ satisfies Assumption~\ref{ass:conv-root}, the following uniform convergence result holds.
\begin{lemma}[Uniform convergence]
\label{lemma:uniform}
Let $u \in C_b(\bbS^{n-1})$, and let $(V_\varepsilon)_{\varepsilon>0}$ be a family of interaction kernels satisfying Assumptions~\ref{ass:repulsive} and \ref{ass:conv-root}, then 
    \begin{equation}
        \label{eq:unif-convergence}
    \sup_{x\in \bbS^{n-1}}\abs[\big]{u(x) - (\sqrt[*]{V_\varepsilon} * u)(x)} \to 0 \quad \text{as } \varepsilon \to 0.
    \end{equation}
    \end{lemma}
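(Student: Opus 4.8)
The plan is a standard density argument: prove the convergence first on spherical polynomials, where it is immediate from the componentwise convergence $\hat V_{\varepsilon,l}\to1$, and then upgrade to all of $C_b(\bbS^{n-1})=C(\bbS^{n-1})$ using a uniform bound on the convolution operators $T_\varepsilon := \sqrt[*]{V_\varepsilon}*(\,\cdot\,)$ acting on $C(\bbS^{n-1})$.

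First I would record the uniform operator bound. By Assumption~\ref{ass:conv-root}, for $\varepsilon<\varepsilon_0$ the kernel $\sqrt[*]{V_\varepsilon}$ is a non-negative function which, as noted just after that assumption, is measure preserving: $\int \sqrt[*]{V_\varepsilon}(x,\cdot)\,d\sigma = 1$ for every $x$. Hence for any $u \in C(\bbS^{n-1})$ and any $x\in\bbS^{n-1}$,
\[
\bigl|(\sqrt[*]{V_\varepsilon}*u)(x)\bigr| \le \int |u(y)|\,\sqrt[*]{V_\varepsilon}(x,y)\,d\sigma(y) \le \|u\|_{L^\infty(\bbS^{n-1})},
\]
so $\|T_\varepsilon\|_{C\to C}\le 1$ uniformly in $\varepsilon\in(0,\varepsilon_0)$. (Note no continuity of $T_\varepsilon u$ is needed here, only this pointwise $L^\infty$ bound.) Next I would treat $u$ a spherical polynomial, $u=\sum_{l\le L,k}\alpha_{l,k}Y_{l,k}$; since the spherical harmonic decomposition of $\sqrt[*]{V_\varepsilon}$ is $(\sqrt{\hat V_{\varepsilon,l}})_l$ by construction and $\sqrt[*]{V_\varepsilon}\in H^1\subset L^1$ by Lemma~\ref{lem:conv-sq-root}, the convolution theorem (Theorem~\ref{cor:convolution-theorem}) gives $T_\varepsilon u=\sum_{l\le L,k}\sqrt{\hat V_{\varepsilon,l}}\,\alpha_{l,k}Y_{l,k}$, whence
\[
\|u-T_\varepsilon u\|_{L^\infty(\bbS^{n-1})} \le \sum_{l\le L,k}\bigl|1-\sqrt{\hat V_{\varepsilon,l}}\bigr|\,|\alpha_{l,k}|\,\|Y_{l,k}\|_{L^\infty(\bbS^{n-1})} \xrightarrow[\varepsilon\to0]{}0,
\]
a finite sum of terms each vanishing by Assumption~\ref{ass:repulsive}.

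To conclude I would invoke that spherical polynomials (finite linear combinations of the $Y_{l,k}$, equivalently restrictions to $\bbS^{n-1}$ of polynomials on $\bbR^n$) are dense in $C(\bbS^{n-1})$ in the uniform norm by Stone--Weierstrass, as they form a point-separating subalgebra containing the constants. Then a three-$\eta$ argument finishes it: given $u\in C(\bbS^{n-1})$ and $\eta>0$, choose a spherical polynomial $p$ with $\|u-p\|_{L^\infty(\bbS^{n-1})}<\eta/3$ and estimate, using the bound $\|T_\varepsilon\|_{C\to C}\le1$,
\[
\|u-T_\varepsilon u\|_{L^\infty(\bbS^{n-1})} \le \|u-p\|_{L^\infty(\bbS^{n-1})} + \|p-T_\varepsilon p\|_{L^\infty(\bbS^{n-1})} + \|T_\varepsilon(p-u)\|_{L^\infty(\bbS^{n-1})} \le \tfrac{2\eta}{3} + \|p-T_\varepsilon p\|_{L^\infty(\bbS^{n-1})},
\]
which is $<\eta$ for $\varepsilon$ small; since $\eta$ is arbitrary, \eqref{eq:unif-convergence} follows.

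I expect the only non-routine point to be the uniform operator bound $\|T_\varepsilon\|_{C\to C}\le1$: it is exactly here that Assumption~\ref{ass:conv-root} is indispensable, since without non-negativity one controls only the signed mass $\int\sqrt[*]{V_\varepsilon}(x,\cdot)\,d\sigma=1$, while the genuine $L^1$-norm $\int|\sqrt[*]{V_\varepsilon}(x,\cdot)|\,d\sigma$ — which is the operator norm on $C(\bbS^{n-1})$ — could blow up as $\varepsilon\to0$. Everything else (density of spherical polynomials, componentwise convergence, the $3\eta$ splitting) is standard, so the proof should be short.
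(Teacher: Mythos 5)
Your proof is correct, but it takes a genuinely different route from the paper's. The paper views $\sqrt[*]{V_\e}(x,\cdot)\,\sigma(dy)$ as a family of probability measures (this is where Assumption~\ref{ass:conv-root} enters there), invokes Prokhorov's theorem together with the $L^2$-convergence of Lemma~\ref{lemma:delta} to identify the weak limit as $\delta_x$, and then obtains uniformity in $x$ by estimating $\abs{u(x)-(\sqrt[*]{V_\e}*u)(x)}\le\int\omega(\dist(x,y))\,\sqrt[*]{V_\e}(x,y)\,\sigma(dy)$ with $\omega$ a modulus of continuity of $u$; the right-hand side is independent of $x$ by zonality and tends to zero by the pointwise convergence just established. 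You instead run a Korovkin-type density argument: the uniform operator bound $\|T_\e\|_{C\to C}\le 1$ (non-negativity plus unit mass, i.e.\ exactly Assumption~\ref{ass:conv-root} again), exact diagonal action on spherical polynomials via the convolution theorem, Stone--Weierstrass, and a $3\eta$ splitting. Both arguments hinge on the same two ingredients --- non-negativity of $\sqrt[*]{V_\e}$ with total mass $1$, and the zonal/spectral structure --- but yours bypasses weak convergence of measures and Lemma~\ref{lemma:delta} entirely, at the price of needing the pointwise (Funk--Hecke type) identity $T_\e Y_{l,k}=\sqrt{\hat V_{\e,l}}\,Y_{l,k}$; strictly speaking, Theorem~\ref{cor:convolution-theorem} gives this only as an $L^2$ identity, but since both sides are continuous (the kernel is zonal and in $L^2$, so $T_\e$ maps $C(\bbS^{n-1})$ into $C(\bbS^{n-1})$), it holds everywhere, so this is a cosmetic rather than substantive point. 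The paper's route has the conceptual benefit of exhibiting $\sqrt[*]{V_\e}$ explicitly as an approximate identity in the sense of measures, in line with how the kernels are interpreted throughout; your route is more elementary and self-contained, and it makes the error on any fixed polynomial explicit in terms of $\abs{1-\sqrt{\hat V_{\e,l}}}$.
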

    \begin{proof}
    Note that under Assumptions \ref{ass:repulsive} and \ref{ass:conv-root}, the family of kernels $(\sqrt[*]{V_\varepsilon})_{\varepsilon >0}$ is equivalent to a family of probability measures. Since the sphere is a compact set, by Prokhorov's theorem any family of probability measures on $\bbS^{n-1}$ is relatively compact. By uniqueness of the limit and Lemma \ref{lemma:delta} we conclude that $\sqrt[*]{V_\varepsilon}(x,y)\sigma(dy) \stackrel{w}{\to} \delta_x(dy)$. As a result, $(\sqrt[*]{V_\varepsilon} * u)(x) \to u(x)$ at every $x$ for any $u \in C_b$.

    To show that the convergence actually is uniform over $\bbS^{n-1}$, note that any $u\in C_b$ is uniformly continuous with some continuous modulus of continuity $\omega:[0,\infty)\to[0,\infty)$. For fixed $x$, the function $y\mapsto \omega(\dist(x,y))$ is continuous on $\bbS^{n-1}$, and the pointwise convergence result above applies. We then estimate for any $x\in \bbS^{n-1}$
    \begin{align*}
    \abs[\big]{u(x) - (\sqrt[*]{V_\varepsilon} * u)(x)}
    &= \abs[\bigg]{\int_{\bbS^{n-1}} (u(x)-u(y) \sqrt[*]{V_\e}(x,y) \, \sigma(dy))}\\
    &\leq \int |u(x)-u(y)| \sqrt[*]{V_\e}(x,y) \, \sigma(dy))\\
    &\leq \int \omega(\dist(x,y)) \sqrt[*]{V_\e}(x,y) \, \sigma(dy))\\
    &\longrightarrow 0\qquad\text{as $\e\to0$.}
    \end{align*}
    This final convergence is uniform in $x$ because of the zonal nature of $\sqrt[*]{V_\e}$. This proves the uniform convergence~\eqref{eq:unif-convergence}. %
\end{proof}

    Finally, we will require the following property of the convolution on a sphere. 
\begin{proposition}
\label{prop:grad}
    Let $W \in H^1(\bbS^{n-1} \times \bbS^{n-1})$ be a zonal kernel, then for every $v \in H^1(\bbS^{n-1})$ the following formula holds
    \[
    \nabla_x (W*v)(x) =\int W(x, y) \Pi_{xy} \nabla_y v(y) d\sigma(y) %
    \]
    where $\Pi_{xy}:= \Gamma(\gamma_{y\to x})_0^1$ is the parallel transport map from $y$ to $x$ along the corresponding geodesic, see Appendix \ref{app:parallel} for the definition.
\end{proposition}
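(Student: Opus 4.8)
The plan is to reduce to smooth data, rewrite the $x$-gradient of the kernel as a parallel-transported $y$-gradient, and then integrate by parts on the closed manifold $\bbS^{n-1}$.

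First I would reduce to the case of a smooth zonal kernel $W$ and a smooth $v$. Both sides are linear in $v$ and in $W$, and both extend to bounded maps $H^1(\bbS^{n-1})\to L^2(T\bbS^{n-1})$, uniformly for $W$ in bounded subsets of $H^1(\bbS^{n-1}\times\bbS^{n-1})$. For the left-hand side this is the estimate already implicit in Lemma~\ref{lemma:gradient}: writing $v=\sum_{l,k}\alpha_{l,k}Y_{l,k}$, the convolution theorem~\ref{cor:convolution-theorem} and the orthogonality~\eqref{eq:orthogonality} give $\|\nabla_x(W*v)\|_{L^2(T\bbS^{n-1})}^2=\sum_{l,k}|\lambda_l|\,\hat W_l^2\,\alpha_{l,k}^2$, and $\sup_l |\lambda_l|\hat W_l^2$ is controlled by $\|W\|_{H^1}$. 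For the right-hand side, $\Pi_{xy}$ is a linear isometry, so by Jensen's and the Cauchy--Schwarz inequalities its $L^2(T\bbS^{n-1})$-norm is at most $\|W(x_0,\cdot)\|_{L^2(\bbS^{n-1})}\,\|\nabla v\|_{L^2(T\bbS^{n-1})}$, the first factor being finite and independent of $x_0$ by zonality. Approximating $W$ in $H^1(\bbS^{n-1}\times\bbS^{n-1})$ by truncations of its Gegenbauer expansion (Lemma~\ref{lemma:gegegnbauer-decomposition}) and $v$ in $H^1(\bbS^{n-1})$ by $C^\infty$ functions then reduces everything to the smooth case.

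Next, with $W(x,y)=w(\langle x,y\rangle)$ and $w$ smooth, I would differentiate under the integral, $\nabla_x(W*v)(x)=\int_{\bbS^{n-1}}\nabla_x W(x,y)\,v(y)\,d\sigma(y)$, and use the geometric identity for zonal kernels
\[
\nabla_x W(x,y)=-\,\Pi_{xy}\bigl(\nabla_y W(x,y)\bigr),\qquad y\notin\{x,-x\},
\]
which follows from the ambient expressions $\nabla_x W(x,y)=w'(\langle x,y\rangle)(y-\langle x,y\rangle x)$ and $\nabla_y W(x,y)=w'(\langle x,y\rangle)(x-\langle x,y\rangle y)$ together with the explicit form of $\Pi_{xy}$ from Appendix~\ref{app:parallel} (geometrically: both gradients lie along the minimizing geodesic joining $x$ and $y$ and have equal length, and parallel transport along that geodesic sends one to minus the other). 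Inserting the product rule $\nabla_y(W(x,y)v(y))=v(y)\nabla_y W(x,y)+W(x,y)\nabla_y v(y)$ and using the linearity of $\Pi_{xy}$, the integrand equals $W(x,y)\Pi_{xy}\nabla_y v(y)-\Pi_{xy}\nabla_y(W(x,y)v(y))$, so the proposition reduces to the Stokes-type identity
\[
\int_{\bbS^{n-1}}\Pi_{xy}\,\nabla_y g(y)\,d\sigma(y)=0\qquad\text{for every }g\in C^\infty(\bbS^{n-1}).
\]
To handle this I would pair with a fixed $\xi\in T_x\bbS^{n-1}$, use isometry of $\Pi_{xy}$ to rewrite the integrand as $\langle\nabla_y g(y),\Xi(y)\rangle$ with $\Xi(y):=\Pi_{yx}\xi$ the radial parallel extension of $\xi$, and integrate by parts on the closed manifold to obtain $-\int g\,\divr\Xi\,d\sigma$, which must then be shown to vanish.

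The step I expect to be the main obstacle is this last one. The field $\Xi$ is only bounded (a unit field) and does not extend continuously across the cut locus $y=-x$, so the integration by parts has to be carried out on $\bbS^{n-1}\setminus B_\delta(-x)$ and one must prove that the contribution of the geodesic ball $B_\delta(-x)$ — both the boundary term over $\partial B_\delta(-x)$ and the part of $\int g\,\divr\Xi$ supported there — vanishes as $\delta\downarrow 0$, and that what remains integrates to zero. This cut-locus analysis, together with keeping track of the $y$-dependence of $\Pi_{xy}$, is where the real work lies; Steps 1 and 2 use only the spectral estimates of Section~\ref{ssec:estimates} and the explicit description of $\Pi_{xy}$ and are comparatively routine.
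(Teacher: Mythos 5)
Your Steps 1 and 2 are fine (the reduction to smooth data, and the identity $\nabla_x W(x,y) = -\Pi_{xy}\nabla_y W(x,y)$ for zonal kernels both check out), but the step you defer to the end --- the Stokes-type identity $\int_{\bbS^{n-1}}\Pi_{xy}\nabla_y g(y)\,d\sigma(y)=0$ --- is not merely where the real work lies: it is false, and no cut-locus analysis can rescue it. Pairing with a fixed $\xi\in T_x\bbS^{n-1}$ and integrating by parts gives $-\int g\,\divr_y\bigl(\Pi_{yx}\xi\bigr)\,d\sigma(y)$, and the radially parallel field $\Xi(y)=\Pi_{yx}\xi$ is \emph{not} divergence-free: on $\bbS^{2}$, in geodesic polar coordinates $(r,\theta)$ centred at $x$ with $\theta$ measured from $\xi$, one computes $\divr\Xi(y)=\abs{\xi}\cos\theta\,\frac{\cos r-1}{\sin r}$, which vanishes only at $r=0$. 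Concretely, take $x$ the north pole of $\bbS^2$ and $g(y)=\langle y,e_1\rangle$: a direct computation (or the divergence formula above) gives $\int\Pi_{xy}\nabla_y g\,d\sigma=\tfrac12 e_1\neq0$. The antipodal singularity is integrable and irrelevant; the obstruction is curvature (on $\bbS^1$ the identity does hold, since transport is trivial there).

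Because your Steps 1--2 are correct and reversible, this does more than break your proof: with $W\equiv1$ (a legitimate zonal $H^1$ kernel) and $v(y)=\langle y,e_1\rangle$, the left-hand side of the proposition is zero while the right-hand side equals $\tfrac12 e_1$ at the north pole, so the identity as stated cannot hold for every zonal $W\in H^1$. What your manipulation actually yields is
\[
\nabla_x (W*v)(x)=\int W(x,y)\,\Pi_{xy}\nabla_y v(y)\,d\sigma(y)\;-\;\int \Pi_{xy}\nabla_y\bigl(W(x,y)v(y)\bigr)\,d\sigma(y),
\]
and after pairing with $\xi$ the second term equals $\int W(x,y)v(y)\,\divr_y(\Pi_{yx}\xi)\,d\sigma(y)$, which is small only when $W(x,\cdot)$ concentrates near $x$ (since $\divr_y(\Pi_{yx}\xi)=O(\dist(x,y))$ there) --- i.e.\ precisely in the localized regime in which the paper invokes the proposition. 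Note that the paper's own proof is a one-line appeal to a calculation in \cite[Proposition 3.9]{bruno2025multiscale}, so your attempt is necessarily a different route; but whatever that calculation provides, the exact statement printed here fails for $W\equiv1$, and the honest conclusion of your approach is that the proposition needs either this curvature/transport correction term or an additional hypothesis, rather than a cleverer proof of the vanishing you postponed.
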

\begin{proof}
    The result follows directly from a calculation in the proof of~\cite[Proposition 3.9]{bruno2025multiscale}.
\end{proof}

\section{Solutions of PDEs on the sphere}
\label{sec:preliminaries}
\subsection{Wasserstein spaces of probability measures}
As already mentioned in the introduction, both models \eqref{eq:intro-main} and \eqref{eq:intro-diffuse} admit a gradient flow formulation in the space of probability measures. In this section we define the Wasserstein distance in the manifold setting and introduce some preliminary results required in the further analysis.

Given two probability measures $\mu, \nu \in \calP(\bbS^{n-1})$ we denote the set of probability measures on $\calP(\bbS^{n-1} \times \bbS^{n-1})$ with first and second marginals being $\mu$ and $\nu$ respectively as $\Pi(\mu, \nu)$. Then the Wasserstein-$p$ distance on $\calP(\bbS^{n-1})$ is defined as
\begin{equation}
\label{eq:wasserstein}
W_p(\mu, \nu) := \inf_{\pi \in \Pi(\mu, \nu)}\left(\int d(x, y)^pd\pi(x, y)\right)^{1/p},
\end{equation}
where $d(x, y) := \arccos(\left<x, y\right>)$ is the standard distance on the unit sphere. Since $\bbS^{n-1}$ is a compact set for every $p\in \bbN$ the infimum is achieved by at least one measure $\pi \in \Pi$ and thus $\inf$ can be replaced by $\min$.

We will require the following Lemma characterizing the behaviour of the Wasserstein distance under the convolution. 
\begin{lemma}[Wasserstein distance under convolution]
\label{lem:was-bound}
Let $V$ be a non-negative interaction kernel of form $V(x, y)= V(\left<x, y\right>)$ satisfying $\int V(x_0, x) d\sigma(x) = 1$. Then for arbitrary measures $\mu, \nu \in \calP(\bbS^{n-1})$ the following bound is satisfied
\[
W_p(V*\mu, V*\nu) \leq C_0 W_p(\mu, \nu),
\]  
for some $C_0 >0$ independent of $\mu, \nu$.
\end{lemma}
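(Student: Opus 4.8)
The natural approach is to build a transport plan for the pair $(V*\mu, V*\nu)$ directly out of an optimal plan for $(\mu,\nu)$, by "pushing mass through the kernel". Concretely, let $\pi \in \Pi(\mu,\nu)$ be an optimal coupling realizing $W_p(\mu,\nu)$. Since $V(x,y) = V(\langle x,y\rangle) \geq 0$ and $\int V(x,\cdot)\,d\sigma = 1$, for each fixed $x_0$ the measure $V(x_0,\cdot)\sigma(d\cdot)$ is a probability measure; moreover $V*\mu = \int V(\cdot,x)\,d\mu(x)$ is exactly the mixture of these probability measures over $x\sim\mu$. The idea is: to transport $V*\mu$ to $V*\nu$, take a point $x\sim\mu$, its partner $y\sim\nu$ under $\pi$, spread $x$ into $V(x,\cdot)\sigma$ and $y$ into $V(y,\cdot)\sigma$, and couple these two spread-out measures in a controlled way — for instance via the rotation (or, more robustly, via a coupling whose cost is controlled by $d(x,y)$). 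Integrating this family of couplings against $\pi(dx,dy)$ produces a plan $\tilde\pi \in \Pi(V*\mu, V*\nu)$, and then $W_p(V*\mu,V*\nu)^p \leq \int d(x',y')^p\,d\tilde\pi \leq C_0^p \int d(x,y)^p\,d\pi = C_0^p W_p(\mu,\nu)^p$.

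The key structural fact that makes the inner coupling work with a dimension-only constant is the \emph{zonal} (rotationally symmetric) nature of $V$: the measure $V(x,\cdot)\sigma$ is the pushforward of $V(y,\cdot)\sigma$ under any rotation $R$ with $Rx = y$. So the first candidate for the inner coupling is $(\mathrm{id}, R)_\#\bigl(V(x,\cdot)\sigma\bigr)$, whose transport cost is $\int d(z, Rz)^p\,V(x,z)\,d\sigma(z)$. One then needs a geometric estimate of the form $d(z,Rz) \leq C_0\, d(x,y)$ uniformly in $z$, valid for a suitably chosen rotation $R$ fixing the geodesic from $x$ to $y$ — this is where the compactness of $\bbS^{n-1}$ and the fact that $d \leq \pi$ enter. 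Such a bound holds because a rotation by angle $\theta = d(x,y)$ moves every point by at most a fixed multiple of $\theta$ (in fact at most $\theta$ for the "minimal" rotation in the 2-plane, but one can be generous and absorb constants). Since $d$ is bounded, one can even get away with the crude bound $d(z,Rz)\leq \pi \leq \pi\, d(x,y)/\mathrm{diam}$ on the complement of a small ball, combined with the sharp estimate near the diagonal.

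Steps, in order: (i) fix an optimal $\pi$ for $(\mu,\nu)$; (ii) for each $(x,y)$ in the support, choose a rotation $R_{x,y}$ with $R_{x,y}x=y$ depending measurably on $(x,y)$, and set the inner coupling $q_{x,y} := (\mathrm{id},R_{x,y})_\#(V(x,\cdot)\sigma)$; (iii) check $q_{x,y}$ has the right marginals, namely first marginal $V(x,\cdot)\sigma$ and second marginal $V(y,\cdot)\sigma$ (the latter uses zonality: $(R_{x,y})_\#(V(x,\cdot)\sigma) = V(y,\cdot)\sigma$); (iv) define $\tilde\pi := \int q_{x,y}\,d\pi(x,y)$ and verify $\tilde\pi \in \Pi(V*\mu,V*\nu)$ using the mixture representation of $V*\mu$ and $V*\nu$ from Definition 2.9 / Proposition 2.24; (v) estimate $\int d^p\,d\tilde\pi = \int\!\!\int d(z,R_{x,y}z)^p V(x,z)\,d\sigma(z)\,d\pi(x,y) \leq C_0^p\int d(x,y)^p\,d\pi$ using the geometric bound; (vi) conclude. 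The measurable-selection step (ii) and the uniform geometric bound in (v) are the main obstacles: one must either invoke a measurable selection theorem for $R_{x,y}$, or sidestep it by an explicit formula (e.g. the rotation in the plane spanned by $x,y$), and then carefully prove $\sup_z d(z, R_{x,y}z) \le C_0\, d(x,y)$ with $C_0$ depending only on $n$ — handling antipodal or near-antipodal pairs $x,y$ (where the rotation is not unique) requires the boundedness of $d$ to save the estimate.
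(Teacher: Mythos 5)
Your plan is correct and, at its core, it is the same construction as the paper's: take an optimal plan $\pi$ for $(\mu,\nu)$, couple the fibre measures $V(x,\cdot)\sigma$ and $V(y,\cdot)\sigma$ by a map depending on $(x,y)$, glue these fibre couplings over $\pi$, check the marginals using zonality, and bound the fibre cost by a uniform geometric estimate of the form $C_0\,\dist(x,y)$. Where you differ is in how that geometric estimate is obtained. The paper parametrizes the spreading by a tangent vector $u_x$, transports it to $y$ by parallel transport along the connecting geodesic, and then invokes a separate lemma (Lemma~\ref{lem:was-geo}, proved by spherical trigonometry in Appendix~\ref{app:geodesics} with $C_0=3$) to compare $\exp_x u_x$ with $\exp_y \Gamma(\gamma_{x\to y})_0^1 u_x$. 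You instead use the minimal rotation $R_{x,y}$ in the $2$-plane spanned by $x,y$; since on the round sphere the differential of this rotation \emph{is} parallel transport along the connecting geodesic, your fibre coupling coincides with the paper's, but your estimate is the elementary fact that a rotation by angle $\theta=\dist(x,y)$ displaces every point of $\bbS^{n-1}$ by geodesic distance at most $\theta$ (writing $z=z_P+z_\perp$ gives $\langle Rz,z\rangle\ge\cos\theta$). This buys you $C_0=1$, which is exactly the constant the paper only conjectures, and it bypasses Appendix~\ref{app:geodesics} entirely. Two small caveats: your aside ``$d(z,Rz)\le \pi \le \pi\,d(x,y)/\mathrm{diam}$'' is not a valid inequality and is not needed — the rotation bound already covers all pairs, including antipodal ones where any rotation by $\pi$ taking $x$ to $y$ gives displacement $\le \pi=\dist(x,y)$; and the measurable-selection issue you flag is genuine but minor, since an explicit formula for $R_{x,y}$ works off the (closed) antipodal set and any measurable choice suffices there — note the paper's parallel-transport map has the same non-uniqueness at antipodal pairs and the paper does not address it either.
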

\begin{proof}
    We adapt the proof of \cite[Lemma 5.2]{santambrogio2015optimal} to the spherical domain. In particular, we introduce a change of variables for the spherical convolution and then define an optimal transport plan which gives the desired bound. 
    
    \emph{Step 1: Change of variables.} Let $x_0$ be a pole of the sphere and note that every point $x\in \bbS^{n-1}\backslash \{-x_0\}$ can be uniquely represented as an end point of a geodesic $x = \exp_{x_0} u_x$ for some $u_x \in B(0, 2\pi) \subset T_{x_0}\bbS^{n-1}$, where $B(0, 2\pi)$ denotes the norm ball in $T_{x_0}\bbS^{n-1}$. Define $\tilde\sigma$ to be the pullback measure of the exponential map on $B(0, 2\pi)$; by definition it satisfies $\tilde\sigma (U) = \sigma (\exp_{x_0}(U))$ for any $U \in B(0, 2\pi)$.
    This allows us to rewrite the convolution on the sphere in the following form
    \[
    \begin{aligned}
    \MoveEqLeft (V*\rho)(x_0) = \int_{\bbS^{n-1}} V(\left<x_0, x\right>)\rho(x)d\sigma(x) \\
    &= \int_{T_{x_0}\bbS^{n-1}} V\left(\left<x_0, \exp_{x_0} u_x\right>\right)\rho(\exp_{x_0} u_x)d\tilde\sigma(u_x) \\   
    &= \int_{T_{x_0}\bbS^{n-1}} \tilde V(\|u_x\|)\rho(\exp_{x_0} u_x)d\tilde\sigma(u_x),
    \end{aligned}
    \]
    where we used the symmetry of the interaction kernel.
    
    \emph{Step 2: Transport plan.} Let $\Pi$ be an optimal transport plan between $\mu$ and $\nu$ and define the transport plan $\Pi_V$ such that for any $\phi \in C^\infty(\bbS^{n-1}\times \bbS^{n-1})$ the following relation is satisfied
    \[
    \begin{aligned}
    \MoveEqLeft\int_{\bbS^{n-1}\times\bbS^{n-1}}\phi(x, y)d\Pi_V(x, y) \\
    &= \int_{\bbS^{n-1}\times\bbS^{n-1}}\int_{T_{x}\bbS^{n-1}} \phi\left(\exp_{x}u_x, \exp_{y}\Gamma(\gamma_{x\to y})_0^1u_x\right) \tilde V(\|u_x\|)d\tilde \sigma(u_x) d\Pi(\mu, \nu),
    \end{aligned}
    \]
    where $\Gamma(\gamma_{x\to y})_0^1$ is the parallel transport map as defined in Appendix \ref{app:parallel}.
    Note that by construction, the marginals of $\Pi_V$ are equal to $V*\mu$ and $V*\nu$ respectively. To illustrate this fact, we check for the first marginal
    \[
    \begin{aligned}
    \MoveEqLeft\int\phi(x)d\Pi_V(x, y) = \int_{\bbS^{n-1}\times\bbS^{n-1}}\int_{T_{x}\bbS^{n-1}} \phi\left(\exp_{x}u_x\right) \tilde V(\|u_x\|)d\tilde \sigma(u_x) d\Pi(\mu, \nu)\\
    &= \int_{\bbS^{n-1}}\int_{T_{x}\bbS^{n-1}} \phi\left(\exp_{x}u_x\right) \tilde V(\|u_x\|)d\tilde \sigma(u_x) d\mu(x) \\
    &= \int_{\bbS^{n-1}} (V*\phi)(x) d\mu(x) = \int_{\bbS^{n-1}\times\bbS^{n-1}} V(\left<x, y\right>)\phi(y)d\sigma(y) d\mu(x) \\
    &= \int_{\bbS^{n-1}} \phi(y) (V*\mu)(y)d\sigma(y).
    \end{aligned}
    \]
    \emph{Step 3: Bounding the distance.} Since $\Pi_V$ is a transport plan, and using Lemma \ref{lem:was-geo} we obtain the following bound on the Wasserstein distance
    \[
    \begin{aligned}
        \MoveEqLeft W^p_p(V*\mu, V*\nu) \leq \int_{\bbS^{n-1}\times\bbS^{n-1}} \dist(x, y)^pd\Pi_V \\
        &= \int_{\bbS^{n-1}\times\bbS^{n-1}} \int_{T_{x}\bbS^{n-1}}\dist\left(\exp_{x}u_x, \exp_{y}\Gamma(\gamma_{x\to y})_0^1u_x\right)^p \tilde V(\|u_x\|)d\tilde\sigma 
        (u_x)d\Pi \\
        &\leq C_0^p \int_{\bbS^{n-1}\times\bbS^{n-1}}\int_{T_{x}\bbS^{n-1}} \dist(x, u)^p \tilde V(\|u_x\|)d\tilde\sigma 
        (u_x) d\Pi \\
        &=C_0^p \int_{\bbS^{n-1}\times\bbS^{n-1}}\dist(x, u)^p d\Pi = C_0^p W_p^p(\mu, \nu),
    \end{aligned}
    \]
    where $C_0$ is the absolute constant from Lemma \ref{lem:was-geo}.
\end{proof}

\begin{lemma}[Distance between geodesics]
\label{lem:was-geo}
Let $x, y \in \bbS^{n-1}$ and $v_x\in T_x\bbS^{n-1}$. Consider the curves $\gamma_x$, $\gamma_y: \bbR_+ \to \bbS^{n-1}$ defined as
\[
\gamma_x(t) := \exp_x(tv_x), \quad \gamma_y(t) := \exp_y(tv_y),
\]
where $v_y$ is the parallel transport of $v_x$ along the geodesic $\gamma_{x\to y}$. Then there exists $C_0 >0$ independent of $v_x$ such that
\[
\dist(\gamma_x(t), \gamma_y(t)) \leq C_0\dist(x, y),
\]
for all $t \in \bbR_+$.
\end{lemma}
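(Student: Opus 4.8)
The plan is to work on the sphere $\bbS^{n-1}$ viewed as a submanifold of $\bbR^n$, and to exploit the fact that the two geodesics $\gamma_x$ and $\gamma_y$ both lie in great circles whose tangent planes are related by a rotation. Concretely, since $v_y = \Pi_{xy}v_x$ is the parallel transport of $v_x$ along the minimizing geodesic from $x$ to $y$, there is a rotation $R\in SO(n)$ — namely the one that implements parallel transport along that geodesic — carrying $x$ to $y$ and $v_x$ to $v_y$. Because the exponential map and the distance function on $\bbS^{n-1}$ are equivariant under isometries, we get $\gamma_y(t) = \exp_y(tv_y) = R\,\exp_x(tv_x) = R\,\gamma_x(t)$, so $\gamma_y(t)$ is exactly the image of $\gamma_x(t)$ under a fixed rotation $R$ that does not depend on $t$.

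The problem then reduces to a purely geometric statement: if $R\in SO(n)$ is a rotation, does $\dist(Rp, p)\le C_0 \dist(Rx,x)$ hold for all $p$ on the geodesic through $x$? This is not true for a fully general rotation and point $p$, so the key observation I would emphasize is that $R$ here is a very special rotation: it is a rotation "in a $2$-plane" (the parallel-transport rotation along a geodesic between two points of $\bbS^{n-1}$ rotates by the angle $d(x,y)=\arccos\langle x,y\rangle$ in the plane spanned by $x$ and a unit tangent vector, and is the identity on the orthogonal complement). For such a rotation the displacement $\dist(Rp,p)$ is controlled by the rotation angle: one has $\langle Rp, p\rangle = \cos^2\psi + \sin^2\psi\cos\alpha$ where $\alpha=d(x,y)$ is the rotation angle and $\psi$ is the angle between $p$ and the axis $2$-plane, so $\dist(Rp,p)=\arccos(1-\sin^2\psi\,(1-\cos\alpha))\le \arccos(1-(1-\cos\alpha)) = \arccos(\cos\alpha) = \alpha$, using $\sin^2\psi\le 1$. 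Hence in fact $\dist(Rp,p)\le \dist(x,y)$, i.e. one can even take $C_0 = 1$ (or a modest absolute constant to be safe near the cut locus). Applying this with $p=\gamma_x(t)$ gives the claim, uniformly in $t$ and in $v_x$.

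The steps, in order, would be: (i) identify the parallel transport map $\Pi_{xy}$ along the minimizing geodesic with an explicit rotation $R\in SO(n)$ fixing the orthogonal complement of $\Span\{x,y\}$ and rotating by angle $\alpha = d(x,y)$ in that plane — this is standard for the round sphere and I would cite Appendix~\ref{app:parallel}; (ii) use isometry-equivariance of $\exp$ to deduce $\gamma_y(t) = R\gamma_x(t)$ for all $t$; (iii) prove the elementary trigonometric inequality $\dist(Rp, p)\le \alpha$ for any unit vector $p$ and any such planar rotation $R$ of angle $\alpha$, by decomposing $p$ into its components in the rotation plane and its orthogonal complement and computing $\langle Rp,p\rangle$ explicitly; (iv) combine to conclude $\dist(\gamma_x(t),\gamma_y(t)) = \dist(R\gamma_x(t),\gamma_x(t)) \le \alpha = \dist(x,y)$.

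I expect the only real obstacle to be bookkeeping around the cut locus and the non-smoothness of $\dist$ there — the minimizing geodesic from $x$ to $y$ is unique only when $y\neq -x$, and $\arccos$ has infinite derivative at $\pm 1$ — but this is harmless for the inequality itself since we only ever compare values of $\dist$, never differentiate it, and when $y=-x$ one can take a limit or simply note both sides are controlled by $\pi$; so the constant $C_0$ is genuinely absolute (indeed $C_0=1$ works, with a slightly larger value if one wants a clean uniform statement including antipodal pairs). The rest is a direct computation with $\arccos\langle\cdot,\cdot\rangle$.
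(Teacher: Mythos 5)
Your proof is correct, and it takes a genuinely different and in fact sharper route than the paper. The paper reduces the problem to $\bbS^2$ and runs a synthetic spherical-geometry argument: it locates the two intersection points of the great circles traced by $\gamma_x$ and $\gamma_y$, builds congruent spherical triangles and midpoints, and combines the triangle inequality with the spherical law of sines to get $C_0=3$, explicitly conjecturing that the sharp constant is $C_0=1$. You instead observe that the parallel transport $\Pi_{xy}$ along the minimizing great-circle arc is the restriction of the rotation $R\in SO(n)$ by angle $\alpha=\dist(x,y)$ in the plane $\Span\{x,y\}$ (the very matrix the paper writes down in Step~1 of its own proof), so by equivariance of $\exp$ under linear isometries $\gamma_y(t)=R\,\gamma_x(t)$ for all $t$; the elementary computation $\langle Rp,p\rangle = 1-\sin^2\psi\,(1-\cos\alpha)\ge\cos\alpha$ then gives $\dist(\gamma_x(t),\gamma_y(t))\le\alpha$ uniformly in $t$ and $v_x$. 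This is shorter, avoids the reduction to dimension three and the triangle bookkeeping, and proves the paper's conjecture $C_0=1$ (which would also tighten the constant propagated into Lemma~\ref{lem:was-bound}). Your handling of the degenerate cases is also fine: for $y=-x$ the minimizing geodesic (hence $\Pi_{xy}$) is not unique, but both sides are bounded by $\pi=\dist(x,y)$, so the estimate holds for any choice, and no differentiation of $\dist$ is ever needed. The one point worth spelling out in a written version is the identification $R|_{T_x\bbS^{n-1}}=\Pi_{xy}$, i.e.\ that $R$ sends $\gamma'_{x\to y}(0)$ to $\gamma'_{x\to y}(\dist(x,y))$ and fixes $\Span\{x,y\}^\perp$, which is a two-line computation with $\gamma(s)=x\cos s+u\sin s$.
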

We provide a proof of the Lemma with $C_0 = 3$ in Appendix \ref{app:geodesics}. We also conjecture that the constant satisfies $C_0 = 1$ and remark that a sharp estimate on $C_0$ would require a more careful treatment of the underlying geometry. To give some preliminary intuition, note that the curves $\gamma_{x}, \gamma_{y}$ form great circles and consider the boundary cases: a) $v_x \parallel \gamma_{x\to y}'$ and b) $v_x \perp \gamma_{x\to y}'$. In the first case the distance is constant $\dist(\gamma_x(t), \gamma_y(t)) = \dist(\gamma_x(0), \gamma_y(0))$ since the trajectories lie on the same great circle. In the second case the distance is maximal at $t = 0$ and oscillates with the period $2\pi$. For more details see Appendix \ref{app:geodesics}.
\subsection{Weak solutions}
We first define the notion of solutions of the evolution equations \eqref{eq:intro-main} and~\eqref{eq:intro-diffuse} and prove existence of  solutions of \eqref{eq:intro-main}. The existence of solutions of~\eqref{eq:intro-diffuse} follows from Theorem \ref{th:main-diff} below.

Recall that for a separable Hilbert space $H$, the space $L^2_{\mathrm{loc}}(0,\infty;H)$ is the space of (equivalence classes of) strongly measurable functions $u:(0,\infty)\to H$ such that for each $T>0$ the norm $\|u\|_{L^2(0,T;H)}$ is finite. Convergence is defined in terms of convergence of each restriction $u|_{[0,T]}$, and a sequence $u^\e$ in $L^2_{\mathrm{loc}}(0,\infty;H)$ is weakly compact for this convergence iff each sequence of norms $\|u^\e\|_{L^2(0,T;H)}$ is bounded independently of $\e$.

\begin{definition}[Weak solution of~\eqref{eq:intro-diffuse}]
\label{def:sol-ade}
A curve $\rho: [0, \infty) \to \calP_{ac}(\bbS^{n-1}) \cap L^2(\bbS^{n-1})$ is a weak solution of the aggregation equation \eqref{eq:intro-main} with initial conditions $\rho_0$ if it satisfies the following properties:
\begin{itemize}
    \item $t\mapsto \rho_t$ is narrowly continuous on $[0,\infty)$,
    \item for almost every $t \geq0$ the measure $\rho_t$ admits a density with respect to the spherical measure $\sigma$, and $\rho \in L^2_{\mathrm{loc}}\left(0,\infty  ; H^1(\bbS^{n-1})\right)$, and
    \item for any $\varphi \in C^2(\bbS^{n-1})$ and all $t \geq 0$ it holds that
\begin{align}
\MoveEqLeft\int_{\bbS^{n-1}} \phi(x) \rho_t(x) d \sigma -\int_{\bbS^{n-1}} \phi(x) \rho_0(x) d \sigma \notag \\
&=- \int_0^t \int_{\bbS^{n-1} \times \bbS^{n-1}}g_x\Big(\nabla_x \varphi(x),  \nabla_x W(x, y)\Big) d \rho_s(y) d \rho_s(x) d r\notag \\
&\qquad-\int_0^t \int_{\bbS^{n-1}}  g_x\Big(\nabla_x \varphi(x),\nabla_x \rho_s(x)\Big) d \rho_s( x) ds.
\label{eq:solADE}
\end{align}
\end{itemize}
\end{definition}
\begin{definition}[Weak solution of \eqref{eq:intro-main}]
    A curve $\rho^{\varepsilon}:[0, \infty) \rightarrow \mathcal{P}(\bbS^{n-1})$ is a weak measure solution to \eqref{eq:intro-main} with initial conditions $\rho_0$ if it satisfies the following properties:
    \begin{itemize}
    \item $t\mapsto \rho^\varepsilon_t$ is narrowly continuous on $[0,\infty)$,
    
    \item for any $\varphi \in C^1(\bbS^{n-1})$ and all $t \geq0$ it holds that
\begin{align}
\MoveEqLeft\int_{\bbS^{n-1}} \varphi(x) d \rho_t^{\varepsilon}(x)-\int_{\bbS^{n-1}} \varphi(x) d \rho_0(x) \notag \\
&=- \int_0^t \int_{\bbS^{n-1} \times \bbS^{n-1}}g_x\Big(\nabla_x \varphi(x),  \nabla_x U_\varepsilon(x, y)\Big) d \rho_s^{\varepsilon}(y) d \rho_s^{\varepsilon}(x) d s, \label{eq:xi}
\end{align}
where $U_\varepsilon(x, y) := W(x, y) + V_{\varepsilon}(x, y)$.
\end{itemize}
\end{definition}
We now prove existence of a weak solution of~\eqref{eq:intro-main} for arbitrary $\varepsilon > 0$.
\begin{proposition}[Existence of solutions of~\eqref{eq:intro-main}]
\label{prop:existence-ae}
  For any $\varepsilon \in \bbR_+$ and for any  $\rho_0\in \calP(\bbS^{n-1})$  there exists a unique weak solution of \eqref{eq:intro-main}, $\rho^\varepsilon: [0, \infty) \to \calP_{ac}(\bbS^{n-1})$, with  initial condition $\rho^\varepsilon(0) = \rho_0$. 
  
  Moreover, there exists a constant $C>0$ such that for any $\e>0$, if $\rho_0\in \calP_{\mathrm{ac}}(\bbS^{n-1})\cap L^2(\bbS^{n-1})$, then $\rho^\e$ satisfies
  \begin{equation}
    \label{eq:conv-bound-existence-theorem}
  \|\sqrt[*]{V_\varepsilon} *\rho^\varepsilon(t)\|^2_{L^2(\bbS^{n-1})} \leq C\bra[\big]{ \|\rho_0\|^2_{L^2(\bbS^{n-1})} + 1}
  \qquad\text{for all }t\geq0.
  \end{equation}
\end{proposition}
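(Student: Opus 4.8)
The plan is to split the statement into its two assertions and handle them with different tools. For the first assertion---existence and uniqueness of a weak measure solution of~\eqref{eq:intro-main}---I would invoke the standard well-posedness theory for aggregation equations with a $C^1$ interaction kernel. Since $U_\varepsilon = W + V_\varepsilon \in C^2(\bbS^{n-1}\times\bbS^{n-1})$ by Assumptions~\ref{assum:fixed-kernel} and~\ref{ass:repulsive}, the velocity field $v_\varepsilon[\rho](x) := -\nabla_x U_\varepsilon * \rho(x)$ is Lipschitz on $\bbS^{n-1}$ uniformly over $\rho\in\calP(\bbS^{n-1})$, with a Lipschitz-in-$\rho$ dependence in Wasserstein distance (this follows from $U_\varepsilon\in C^2$ together with Proposition~\ref{prop:grad}). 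The equation~\eqref{eq:intro-main} is then the continuity equation driven by $v_\varepsilon[\rho_t]$, and the classical fixed-point/characteristics argument (e.g. the Dobrushin-type estimate, as in the references on aggregation equations cited in the related-work section) produces a unique narrowly continuous curve $\rho^\varepsilon:[0,\infty)\to\calP(\bbS^{n-1})$ solving~\eqref{eq:xi}; absolute continuity of $\rho^\varepsilon_t$ is propagated from $\rho_0$ by the flow map of the Lipschitz field, which is a bi-Lipschitz diffeomorphism of $\bbS^{n-1}$ for each $t$. I would state this part briefly, citing the standard theory, since the sphere is compact and introduces no new difficulty here.

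For the second assertion---the uniform-in-$\varepsilon$ bound~\eqref{eq:conv-bound-existence-theorem}---the idea is to differentiate the energy along the flow and use the gradient-flow structure~\eqref{eq:intro-gf}. Assuming first that $\rho_0$ is smooth and positive (a density of this type can be approximated and the bound passed to the limit by lower semicontinuity of the $L^2$ norm under weak convergence, combined with Lemma~\ref{lem:was-bound} for stability of $\sqrt[*]{V_\varepsilon}*\rho^\varepsilon$), the solution $\rho^\varepsilon_t$ stays smooth and the energy-dissipation identity reads
\[
\frac{d}{dt}\calF_\varepsilon(\rho^\varepsilon_t) = -\int_{\bbS^{n-1}} \rho^\varepsilon_t(x)\,\bigl|\nabla_x \tfrac{\delta\calF_\varepsilon}{\delta\rho}(\rho^\varepsilon_t)(x)\bigr|^2 \, d\sigma(x) \leq 0.
\]
Hence $\calF_\varepsilon(\rho^\varepsilon_t)\leq\calF_\varepsilon(\rho_0)$ for all $t\geq0$. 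Now using the alternative expression~\eqref{eq:alt-formulation-Fe},
\[
\tfrac12\|\sqrt[*]{V_\varepsilon}*\rho^\varepsilon_t\|^2_{L^2(\bbS^{n-1})} = \calF_\varepsilon(\rho^\varepsilon_t) - \tfrac12\!\int W(x,y)\,d\rho^\varepsilon_t(x)\,d\rho^\varepsilon_t(y) \leq \calF_\varepsilon(\rho_0) + \tfrac12\|W\|_{L^\infty(\bbS^{n-1})},
\]
and by Lemma~\ref{l:energy-bound} the right-hand side is bounded by $\|W\|_{L^\infty(\bbS^{n-1})} + C\|\rho_0\|^2_{L^2(\bbS^{n-1})}$, which is exactly a bound of the form $C(\|\rho_0\|^2_{L^2}+1)$ after adjusting the constant. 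This gives~\eqref{eq:conv-bound-existence-theorem}.

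The main obstacle is making the energy-dissipation inequality rigorous for a merely $L^2$ initial datum, since at the level of generality of Definition~\ref{def:sol-ade} we only know $\rho^\varepsilon_t\in\calP(\bbS^{n-1})$ and not that it has an $L^2$ density, let alone enough regularity to differentiate $\calF_\varepsilon$. I would circumvent this by a regularization argument: take smooth positive $\rho_0^\delta\to\rho_0$ in $L^2(\bbS^{n-1})$, run the (now classical, smooth) flow, obtain $\|\sqrt[*]{V_\varepsilon}*\rho^{\varepsilon,\delta}_t\|^2_{L^2}\leq C(\|\rho_0^\delta\|^2_{L^2}+1)$ uniformly in $\delta$, then let $\delta\to0$. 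Stability of the solution map in $W_2$ (from the first part) gives $\rho^{\varepsilon,\delta}_t\stackrel{w}{\to}\rho^\varepsilon_t$, hence by Corollary~\ref{prop:conv-root} and weak lower semicontinuity of the $L^2$ norm, $\|\sqrt[*]{V_\varepsilon}*\rho^\varepsilon_t\|^2_{L^2}\leq\liminf_\delta\|\sqrt[*]{V_\varepsilon}*\rho^{\varepsilon,\delta}_t\|^2_{L^2}\leq C(\|\rho_0\|^2_{L^2}+1)$, since $\|\rho_0^\delta\|_{L^2}\to\|\rho_0\|_{L^2}$. A secondary technical point is justifying that $\sqrt[*]{V_\varepsilon}*\rho^{\varepsilon,\delta}\stackrel{w}{\to}\sqrt[*]{V_\varepsilon}*\rho^{\varepsilon}$ in $L^2$; this follows because $\sqrt[*]{V_\varepsilon}\in H^1\subset C(\bbS^{n-1}\times\bbS^{n-1})$ for fixed $\varepsilon$ (Lemma~\ref{lem:conv-sq-root} plus Sobolev embedding in low dimension, or more simply the uniform bound on $\hat V_{\varepsilon,l}$ and summability $\sum_l l^n\hat V_{\varepsilon,l}<\infty$), so testing against a fixed $L^2$ function reduces to narrow convergence of $\rho^{\varepsilon,\delta}_t$.
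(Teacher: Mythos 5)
Your proposal is correct in substance but takes a genuinely different route from the paper. The paper proves existence by the minimizing-movement (JKO) scheme for $\calF_\varepsilon$ in $(\calP(\bbS^{n-1}),W_2)$: the scheme gives $\calF_\varepsilon(\rho^\varepsilon_{\tau,k})\le\calF_\varepsilon(\rho^\varepsilon_{\tau,k-1})$ by construction, and the bound \eqref{eq:conv-bound-existence-theorem} is read off from this discrete monotonicity together with \eqref{eq:alt-formulation-Fe} and Lemma~\ref{l:energy-bound} before letting $\tau\to0$; uniqueness is obtained, as in your proposal, by a Dobrushin argument. You instead build the solution by characteristics/fixed point and derive the bound from energy dissipation along the continuous flow. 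This works, and can even be streamlined: writing $\rho^\varepsilon_t=(\Phi_t)_\#\rho_0$ for the flow of the Lipschitz field $-\nabla_x U_\varepsilon*\rho^\varepsilon_t$, the map $t\mapsto\calF_\varepsilon(\rho^\varepsilon_t)=\tfrac12\iint U_\varepsilon(\Phi_t(x),\Phi_t(y))\,d\rho_0(x)\,d\rho_0(y)$ is differentiable for \emph{arbitrary} $\rho_0\in\calP(\bbS^{n-1})$ (only $U_\varepsilon\in C^1$ and boundedness of the velocity are needed), with derivative $-\int\|\nabla U_\varepsilon*\rho^\varepsilon_t\|_{g}^2\,d\rho^\varepsilon_t\le0$; so your smooth-data regularization and the $\delta\to0$ limit passage are unnecessary. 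What the paper's JKO route buys is that the interpolants $\rho^\varepsilon_{\tau,k}$ and the discrete energy estimate are reused verbatim in the flow-interchange argument of Lemma~\ref{lemma:bound-v}, so skipping the construction here would force you to redo that lemma; what your route buys is a shorter, more classical proof of this proposition in isolation.

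Three points in your write-up need repair. First, $H^1(\bbS^{n-1}\times\bbS^{n-1})$ does not embed into the continuous functions except in very low dimension, so your first justification of $\sqrt[*]{V_\varepsilon}*\rho^{\varepsilon,\delta}_t\rightharpoonup\sqrt[*]{V_\varepsilon}*\rho^{\varepsilon}_t$ fails as stated; your second suggestion does work but must be carried out, e.g. $|(\proj_l g)(x)|\le\sqrt{Z_l(x,x)}\,\|\proj_l g\|_{L^2}$ together with Cauchy--Schwarz gives $\|\sqrt[*]{V_\varepsilon}*g\|_{L^\infty}\lesssim\big(\sum_l l^{\,n-2}\hat V_{\varepsilon,l}\big)^{1/2}\|g\|_{L^2}<\infty$ by Assumption~\ref{ass:repulsive}, with uniform convergence of the partial sums giving continuity; alternatively, and more simply, use $\|\sqrt[*]{V_\varepsilon}*\rho\|_{L^2}^2=\iint V_\varepsilon\,d\rho\,d\rho$ and the continuity of $V_\varepsilon$, so that narrow convergence of $\rho^{\varepsilon,\delta}_t\otimes\rho^{\varepsilon,\delta}_t$ yields convergence of the norms. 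Second, the parenthetical appeal to Lemma~\ref{lem:was-bound} would smuggle in Assumption~\ref{ass:conv-root} (pointwise non-negativity of $\sqrt[*]{V_\varepsilon}$), which neither the statement nor the paper's proof of this proposition requires; it should be dropped. Third, "citing the standard theory" for well-posedness is not quite available off the shelf on the sphere: as the paper remarks, the existing manifold results assume data supported in a hemisphere and the $\R^n$ results do not apply verbatim, so the Dobrushin/characteristics argument should be written out on $\bbS^{n-1}$ (or the sphere-specific reference invoked, as the paper does for uniqueness). Finally, note that absolute continuity of $\rho^\varepsilon_t$ can only be propagated when $\rho_0$ is itself absolutely continuous; for general $\rho_0\in\calP(\bbS^{n-1})$ the solution is only a measure-valued curve, which is also all the paper's construction provides.
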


\begin{remark}[Related well-posedness results]
The global well-posedness result of Proposition~\ref{prop:existence-ae} is contained in e.g.~\cite{FetecauPatacchini22,fetecau2021well}, but only for initial data with support confined to a hemisphere. The well-posedness results of~\cite{PiccoliRossi13} and~\cite{burger2023porous} both cover this same type of evolution, but only in $\R^n$; in addition, we will need the estimate~\ref{eq:conv-bound-existence-theorem}, and therefore we give the details of the proof, following that of~\cite{burger2023porous}. 
\end{remark}

\begin{proof}[Proof of Proposition \ref{prop:existence-ae}]
We first prove the uniqueness. Denote the measure-dependent vector-field in the continuity equation \eqref{eq:xi} by $\xi_\varepsilon[\mu] \in T\bbS^{n-1}$:
\[
\xi_\varepsilon[\mu](x): = \nabla_x U_\varepsilon(x, \cdot) * \mu,
\]
and note that for every $\e>0$ the map $x\mapsto \xi_\varepsilon[\mu](x)$ is bounded and Lipschitz continuous uniformly in $\mu$:
\begin{align*}
\|\xi_\varepsilon[\mu](x)\|^2_{L^\infty} &=\sup_{x\in\bbS^{n-1}} g_x(\xi_\varepsilon[\mu](x), \xi_\varepsilon[\mu](x)) \\
&\leq \sup_{x, y\in\bbS^{n-1}} g_x(\nabla_x U_\e(x, y), \nabla_x U_\e(x, y)) < \infty, \\
\|\xi_\varepsilon[\mu](x) - \Pi_{xy}\xi_\varepsilon[\mu](y)\|_{g_x} 
&= \norm[\bigg]{\int_{\bbS^{n-1}}\pra[\big]{\nabla_x U_\e(x, z) - \Pi_{xy}\nabla_y U_\e(y, z)}d\mu(z)}_{g_x} \leq \Lip(\nabla_x U_\varepsilon),
\end{align*}
where $\Lip(\nabla_x U_\varepsilon)$ is the Lipschitz constant of $\nabla_x U_\varepsilon$, namely the smallest  constant satisfying
\begin{multline*}
\|\nabla_x U_\varepsilon(x, z_1) - \Pi_{xy}\nabla_y U_\varepsilon(y, z_2)\|_{g_x} \leq \Lip(\nabla_x U_\e) (\dist(x, y) + \dist(z_1, z_2)) \\
\qquad\text{for all }x,y, z_1, z_2\in \bbS^{n-1}.
\end{multline*}
Note that  the Lipschitz constant is well-defined for all $\varepsilon > 0$ since both kernels $W$ and $V_\varepsilon$ are at least of $C^1$ regularity. 
In addition, $\xi_\varepsilon[\mu]$ is Lipschitz continuous as a function of $\mu$ in the Wasserstein-1 topology:
\[
\begin{aligned}
\MoveEqLeft \left\|\xi_\varepsilon[\mu_1] - \xi_\varepsilon[\mu_2]\right\|^2_{L^\infty} := \sup_{x\in\bbS^{n-1}} \norm[\big]{\xi_\varepsilon[\mu_1](x) - \xi_\varepsilon[\mu_2](x)}_{g_x}^2 \\
&= \sup_{x\in\bbS^{n-1}} \norm[\bigg]{\int_{\bbS^{n-1}}\nabla_x U_\e(x, z_1) d\mu_1(z_1) - \int_{\bbS^{n-1}}\nabla_x U_\e(x, z_2) d\mu_2(z_2)}_{g_x}^2 \\
&\leq \Lip(\nabla_x U_\varepsilon)^2 \left(\int_{\bbS^{n-1}\times\bbS^{n-1}} \dist(z_1, z_2)d\pi(z_1,z_2)\right)^2 = \Lip(\nabla_x U_\varepsilon)^2 W_1(\mu_1, \mu_2)^2,
\end{aligned}
\]
where $\pi$ is an optimal Wasserstein-1 transport plan between $\mu_1$ and $\mu_2$. Thus, the uniqueness of solutions of \eqref{eq:intro-main} follows from a standard Dobrushin argument along the lines of \cite[Theorem A.4]{bruno2024emergence}. 

\medskip
We now turn to the existence. 
     We use the minimizing movement scheme on the space of probability measures $(\calP(\bbS^{n-1}), W_2)$ equipped with the Wasserstein distance to establish existence of weak solutions to \eqref{eq:intro-main}. The proof closely follows the approach of Prop.~3.1 and Th.~3.1 of~\cite{burger2023porous} with differences arising from the lack of the vector structure of the underlying space. 

    \emph{Step 1: Constructing $\rho^\varepsilon$.} For $\tau > 0$ let $ \rho_\tau^\varepsilon: [0, \infty) \to \calP(\bbS^{n-1})$ be the piecewise-constant interpolant obtained as a solution of the minimizing movement scheme in the Wasserstein space $(\calP(\bbS^{n-1}),W_2)$ defined in the following way:
\[
\begin{aligned}
&\rho_\tau^\varepsilon(s)= \rho_{\tau,k}^\varepsilon, \quad \text{for } s \in [k\tau, (k+1)\tau), \quad k=0, 1,\dots, \qquad \qquad \rho^\e_{\tau,0} = \rho_0,\\
&\rho^\varepsilon_{\tau,k} \in \argmin_{\rho\in \calP(\bbS^{n-1})} \calF_\varepsilon(\rho) + \frac{1}{2\tau}W^2_2(\rho, \rho^\varepsilon_{\tau,k-1}),
\end{aligned}
\]
where $\calF_\varepsilon: \calP(\bbS^{n-1}) \to \bbR$ is the energy functional defined in~\eqref{eq:intro-free-energy}.
    Applying the same arguments as \cite[Proposition 3.1]{burger2023porous}, we conclude that the sequence $\rho^\e_\tau$  is weakly compact in the compact-open topology; more precisely, there exists a sequence $\tau_\ell \to 0$ and a weakly continuous curve $\rho^\varepsilon: [0, \infty) \to \calP(\bbS^{n-1})$ such that for every $T>0$, the sequence $\rho_{\tau, k}^\varepsilon\big|_{[0, T]}$ converges weakly to $\rho^\varepsilon\big|_{[0,T]}$, uniformly on the interval $[0, T]$.
    Moreover, by construction, the sequence $(\rho_{\tau, k}^\varepsilon)_{k\in \bbN}$ satisfies
    \begin{equation}
        \label{eq:estimate:cont}
    \frac 1{2\tau}\sum_{k\geq 0} {W_2^2(\rho^\varepsilon_{\tau, k},\rho^\varepsilon_{\tau, k-1}) } 
    \leq \calF_{\varepsilon}(\rho_0) - \inf_\rho \calF_\e (\rho)    
    \stackrel{\text{Lem.~\ref{l:energy-bound}}}\leq C(1+\|\rho_0\|^2_{L^2(\bbS^{n-1})}).
    \end{equation}
   Note that this bound also implies the following uniform-in-$\e$ continuity estimate:
    \begin{align}
    \MoveEqLeft W_2(\rho^\varepsilon_{\tau}(s),\rho^\varepsilon_{\tau}(t)) \leq  \sum_{k = \lfloor\frac{s}{\tau}\rfloor}^{\lfloor\frac{t}{\tau}\rfloor }W_2(\rho^\varepsilon_{\tau, k},\rho^\varepsilon_{\tau, k-1}) \leq \left(\frac{t-s}{\tau} + 1 \right)^{1/2}\left(\sum_{k = \lfloor\frac{s}{\tau}\rfloor}^{\lfloor\frac{t}{\tau}\rfloor }W^2_2(\rho^\varepsilon_{\tau, k},\rho^\varepsilon_{\tau, k-1})\right)^{1/2} \notag \\
    &\leq c (\sqrt{\tau}+\sqrt{t-s}), \label{eq:weak-continuity}
    \end{align}
    for some positive constant $c>0$.

    \emph{Step 2: Perturbing the interaction energy.} For every $\tau, \varepsilon$ consider the sequence $(\rho^\varepsilon_{\tau,k})_{k\in \bbN}$ constructed in Step 1. For any $\varphi \in C^\infty(\bbS^{n-1})$ and $\eta > 0$ introduce the perturbation of $\rho^\varepsilon_{\tau,k}$ of form
    \[
    \rho^\eta := (\exp_x \eta \nabla \varphi(x))_\# \rho^\varepsilon_{\tau,k},
    \]
where $(F)_\# \rho$ is the push-forward of $\rho$ under the map $F$. Estimating the difference $\calF_\varepsilon(\rho^\eta) - \calF_\varepsilon(\rho^\varepsilon_{\tau,k})$ we obtain
\[
\begin{aligned}
   \MoveEqLeft\frac{1}{\eta}\left(\calF_\varepsilon(\rho^\eta) - \calF_\varepsilon(\rho^\varepsilon_{\tau,k})\right) =  \frac{1}{2\eta}\iint U_{\varepsilon}(x, y)d\rho^\eta(x)d\rho^\eta(y)
    -\frac{1}{2\eta}\iint U_{\varepsilon}(x, y)d\rho^\varepsilon_{\tau,k}(x)d\rho^\varepsilon_{\tau,k}(y) \\
   &=\iint \frac{\left(U_\varepsilon(\exp_x \eta \nabla \varphi(x), \exp_y \eta \nabla \varphi(y)) - U_\varepsilon(x, y)\right)}{2\eta}d\rho^\varepsilon_{\tau,k}(x)d\rho^\varepsilon_{\tau,k}(y) \\
   &= \iint \frac{\eta g_x\left(\nabla_xU_\varepsilon(x,y),  \nabla \varphi(x)\right) + o(\eta)}{\eta}d\rho^\varepsilon_{\tau,k}(x)d\rho^\varepsilon_{\tau,k}(y), 
\end{aligned}
\]
where the last equality follows from the symmetry of the interaction kernels. Note that the pointwise convergence
\[
\frac{1}{\eta}\left(U_\varepsilon(\exp_x \eta \nabla \varphi(x), y) - U_\varepsilon(x, y)\right) \to g_x\left(\nabla_xU_\varepsilon(x,y),  \nabla \varphi(x)\right)
\]
holds for every $x,y \in\bbS^{n-1}$ and $\phi \in C^\infty(\bbS^{n-1})$ by the definition of the gradient. Hence, by means of the dominated convergence theorem we conclude that
\[
\frac{1}{\eta}\left(\calF_\varepsilon(\rho^\eta) - \calF_\varepsilon(\rho^\varepsilon_{\tau,k})\right)  \stackrel{\eta\to 0}{\to} \iint g_x\left(\nabla_x U_\varepsilon(x,y),  \nabla \varphi(x)\right)d\rho^\varepsilon_{\tau,k}(x)d\rho^\varepsilon_{\tau,k}(y),
\]
where we used that for every $\varepsilon >0$ and arbitrary $\varphi \in C^\infty(\bbS^{n-1})$ the fraction
\[
\frac{U_\varepsilon(\exp_x \eta \nabla \varphi(x), \exp_y \eta \nabla \varphi(y)) - U_\varepsilon(x, y)}{2\eta}
\]
is bounded uniformly in $\eta$ and $x, y \in \bbS^{n-1}$ since, by the regularity assumptions on the kernels $V_\varepsilon$ and $W$, their sum $U_\varepsilon$ is $C^1$ on $\bbS^{n-1} \times \bbS^{n-1}$. %

\emph{Step 3: Perturbing the Wasserstein distance.}
Let $\gamma^\varepsilon_{\tau, k}$ be an optimal transport plan between $\rho^\varepsilon_{\tau,k-1}$ and $\rho^\varepsilon_{\tau,k}$. Estimating the change of the Wasserstein distance under the same perturbation of $\rho^\varepsilon_{\tau,k}$ as in Step 2, we obtain:
\begin{align*}
    \MoveEqLeft\frac{1}{2\tau}\left(\frac{W^2_2(\rho^\eta, \rho^\varepsilon_{\tau,k-1}) - W^2_2(\rho^\varepsilon_{\tau,k}, \rho^\varepsilon_{\tau,k-1})}{\eta}\right) \\
    &\leq \frac{1
    }{2\tau\eta}\iint \left(\dist^2(x, \exp_y{\eta\nabla \varphi(y)}) - \dist^2(x, y)\right)d\gamma^\varepsilon_{\tau, k}(x, y) \\
    &\xrightarrow{\eqref{eq:deriv-of-dsquared}}  -\frac{1
    }{\tau}\iint g_y(\log_y x, \nabla \varphi(y)) \, d\gamma^\varepsilon_{\tau, k}(x, y) \qquad\text{as }\eta\to0.
\end{align*}
    
\emph{Step 4: Combining the estimates.}
Since $\rho_{\tau, k}^\varepsilon$ is a solution of the minimizing movement scheme, the following inequality holds for arbitrary $\eta>0$ and $\varphi$:
\[
\calF_\varepsilon(\rho^\eta) + \frac{1}{2\tau}W^2_2(\rho^\eta, \rho^\varepsilon_{\tau,k-1}) \geq \calF_\varepsilon(\rho^\varepsilon_{\tau,k}) + \frac{1}{2\tau}W^2_2(\rho^\varepsilon_{\tau,k}, \rho^\varepsilon_{\tau,k-1}).
\]
After rearranging we obtain for $\eta>0$
\[
\frac{1}{2\tau}\left(\frac{W^2_2(\rho^\eta, \rho^\varepsilon_{\tau,k-1}) - W^2_2(\rho^\varepsilon_{\tau,k}, \rho^\varepsilon_{\tau,k-1})}{\eta}\right) \geq -\frac{1}{\eta}\left(\calF_\varepsilon(\rho^\eta) - \calF_\varepsilon(\rho^\varepsilon_{\tau,k})\right). 
\]
Hence, taking $\eta \to 0$ we obtain
\[
-\frac{1
    }{\tau}\iint g_y(\log_y x, \nabla \varphi(y)) d\gamma^\varepsilon_{\tau, k}(x, y) 
    \geq 
    -\iint g_x\left(\nabla_x U_\varepsilon(x,y),  \nabla \varphi(x)\right)d\rho^\varepsilon_{\tau,k}(x)d\rho^\varepsilon_{\tau,k}(y)
\]
Replacing $\varphi$ by $-\varphi$ gives the equality
\[
\frac{1
    }{\tau}\iint g_y(\log_y x, \nabla \varphi) d\gamma^\varepsilon_{\tau, k}(x, y) = \iint g_x\left(\nabla_x U_\varepsilon(x,y),  \nabla \varphi(x)\right)d\rho^\varepsilon_{\tau,k}(x)d\rho^\varepsilon_{\tau,k}(y).
\]
Moreover, by definition of the manifold gradient for any $\varphi \in C^\infty$ we obtain:
\[
g_y(\log_y x, \nabla \varphi) = \varphi(x) - \varphi(y) + O(\dist^2(x, y)),
\]
uniformly in $x, y$ as $\dist(x, y) \to 0$, which implies that
\[
\begin{aligned}
\frac{1
    }{\tau}\iint g_y(\log_y x, \nabla \varphi) d\gamma^\varepsilon_{\tau, k}(x, y) &=\frac{1
    }{\tau}\int \varphi(x)\left(d\rho^\varepsilon_{\tau, k} - d\rho^\varepsilon_{\tau, k-1}\right) 
    + O\left(\frac1\tau W_2^2(\rho^\varepsilon_{\tau, k}, \rho^\varepsilon_{\tau, k-1})\right).
    \end{aligned}
\]
Multiplying by $\tau$ and summing over the time steps we obtain
\begin{multline*}
\int \varphi(x)\left(d\rho^\varepsilon_{\tau}(T) - d\rho^\varepsilon_{\tau}(0)\right) 
\\
=\sum_k\iint g_x\left(\nabla_x U_\varepsilon(x,y),  \nabla \varphi(x)\right)d\rho^\varepsilon_{\tau,k}(x)d\rho^\varepsilon_{\tau,k}(y) 
+ O\left(\sum_k W_2^2(\rho^\varepsilon_{\tau, k}, \rho^\varepsilon_{\tau, k-1})\right). 
\end{multline*}
Note that by Lemma~\ref{l:energy-bound} for any $\rho_0 \in L^2(\bbS^{n-1})$ the energy $\calF_{\varepsilon}(\rho_0)$ is bounded uniformly in $\varepsilon$. Hence, using the estimate \eqref{eq:estimate:cont} we conclude that the error term satisfies
\[
O\left(\sum_k W_2^2(\rho^\varepsilon_{\tau, k}, \rho^\varepsilon_{\tau, k-1})\right) = O(\tau),
\]
and taking the limit $\tau_\ell \to 0$ we conclude that $\rho^\varepsilon$ is a weak solution of~\eqref{eq:intro-main}.

\emph{Step 5: $L^2$ bound.}  We now prove the bound~\eqref{eq:conv-bound-existence-theorem} under the additional assumption that $\rho_0\in L^2$. By construction of $\rho_{\tau,k}^\varepsilon$ we obtain  
    \[ \calF_\varepsilon(\rho^\varepsilon_{\tau,k}) + \frac{1}{2\tau}W^2_2(\rho, \rho^\varepsilon_{\tau,k-1}) \leq \calF_\varepsilon(\rho^\varepsilon_{\tau,k-1}),
    \]
    and after rearranging, iterating over $k$ and using the form~\eqref{eq:alt-formulation-Fe} for $\calF_\varepsilon$,   we obtain
    \[
    \begin{aligned}
\MoveEqLeft\frac{1}{2}\|\sqrt[*]{V_\varepsilon}*\rho^\varepsilon_{\tau,k}\|^2_{L^2(\bbS^{n-1})} \leq \frac{1}{2}\|\sqrt[*]{V_\varepsilon} *\rho^\varepsilon_{\tau,0}\|^2_{L^2(\bbS^{n-1})}\\
&\quad+ \frac{1}{2} \int W(x, y)(d\rho^\varepsilon_{\tau,0}(x)d\rho^\varepsilon_{\tau,0}(y) - d\rho^\varepsilon_{\tau,k}(x)d\rho^\varepsilon_{\tau,k}(y)) \\
&\leq \frac12 \|\sqrt[*]{V_\varepsilon} *\rho_0\|^2_{L^2(\bbS^{n-1})}  + \|W\|_{L^\infty}.
    \end{aligned}
    \]
    Since the bound is independent of $\tau$, passing to the limit $\tau \to 0$ we conclude that $\|\sqrt[*]{V_\varepsilon} *\rho^\varepsilon(t)\|^2_{L^2(\bbS^{n-1})} \lesssim \|\sqrt[*]{V_\varepsilon} *\rho_0\|^2_{L^2(\bbS^{n-1})} + \|W\|_\infty$. Moreover, by Assumption \ref{ass:repulsive}, there exists $C>0$ such that for all $\e$ and all $\rho_0\in L^2$, 
    \[
    \|\sqrt[*]{V_\varepsilon} *\rho_0\|^2_{L^2(\bbS^{n-1})} \leq C\|\rho_0\|^2_{L^2(\bbS^{n-1})}.
    \]
    This proves the bound~\eqref{eq:conv-bound-existence-theorem}.
\end{proof}

\subsection{Heat flow on $\bbS^{n-1}$}
The compactness argument in Lemma \ref{lem:comp-v} below relies on the flow interchange technique introduced in \cite{matthes2009family}, where the auxiliary flow is the heat flow. The same argument was also used in the Euclidean setting in \cite{burger2023porous}. In this section we give a concise characterization of the heat flow on $\bbS^{n}$ following~\cite{erbar2010heat}.

\begin{definition}[Heat flow]
\label{def:heat-flow}
    The heat flow on a sphere is the unique semigroup $\calS^t$ generating gradient flow solutions of the relative entropy $\calE: \calP(\bbS^{n-1}) \to \bbR$ in $W_2$ topology, where $\calE$ is defined as
    \begin{equation}
\label{eq:entropy}
\calE(\mu) := \begin{cases}
    \int_{\calM} \rho \log \rho \,d{\sigma} & \text{ if } \mu \text{ admits density } \rho \text{ w.r.t. } \sigma,  \\
    +\infty &\text{otherwise.}
\end{cases}
\end{equation}
\end{definition}
The uniqueness of $\calS^t$ is proved in \cite[Theorem 1]{erbar2010heat}. Moreover, from~\eqref{eq:heat-kernel-Gegenbauer} it follows that for any $\rho \in \calP(\bbS^{n-1})$  the action of the heat semigroup takes the form 
    \begin{equation}
        \label{eq:heat-sg-in-SH-form}
    \calS^s\rho = \sum_{ l, k}e^{-sl(n-2+l)}\alpha_{l, k}Y_{l,k}
    \qquad \text{where $\alpha_{l,k}= \langle \rho,Y_{l,k}\rangle$.} 
    \end{equation}

In addition the semigroup $\calS^t$ 
satisfies the Evolution Variational Inequality (EVI).
\begin{proposition}[Evolution Variational Inequality (EVI)]
    \label{prop:evi}
    For all $\rho_0, \nu \in \calP(\bbS^{n-1})$ such that $\calE(\nu)<\infty$, the following inequality is satisfied:
    \begin{equation}
        \label{eq:EVI}
    \frac{1}{2}\frac{d^+}{dt}W_2^2(\calS^t\rho_0, \nu) \leq \calE(\nu) - \calE(\calS^t\rho_0) -\frac{n-2}{2}W_2^2(\calS^t\rho_0, \nu),
    \end{equation}
    for all $t\in [0, \infty)$.
\end{proposition}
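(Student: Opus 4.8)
The plan is to deduce~\eqref{eq:EVI} from the general equivalence, on smooth Riemannian manifolds, between a lower Ricci bound and the evolution variational inequality for the entropy gradient flow. Concretely, I would invoke the following fact: if $M$ is a smooth compact Riemannian manifold, equipped with its normalized volume as reference measure, and if $\mathrm{Ric}\geq K g$ for some $K\in\bbR$, then the $W_2$-gradient flow $\calS^t$ of the relative entropy $\calE$ --- which by Definition~\ref{def:heat-flow} is exactly our heat flow, cf.~\eqref{eq:heat-sg-in-SH-form} and~\cite{erbar2010heat} --- satisfies, for every $\rho_0\in\calP(M)$ and every $\nu$ with $\calE(\nu)<\infty$,
\[
\frac12\,\frac{d^+}{dt}\,W_2^2(\calS^t\rho_0,\nu)+\frac{K}{2}\,W_2^2(\calS^t\rho_0,\nu)\ \leq\ \calE(\nu)-\calE(\calS^t\rho_0),\qquad t\in[0,\infty).
\]
This is the content of the curvature--dimension theory of~\cite{erbar2010heat} and related works (von Renesse--Sturm, Ambrosio--Gigli--Savar\'e, Daneri--Savar\'e): a lower bound $\mathrm{Ric}\geq K g$ is equivalent to geodesic $K$-convexity of $\calE$ in $(\calP(M),W_2)$, and under this convexity the heat flow is the $\mathrm{EVI}_K$ gradient flow of $\calE$.

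Second, I would identify the constant for $M=\bbS^{n-1}$ carrying the round metric induced from $\bbR^n$. Since $\bbS^{n-1}$ has constant sectional curvature $1$, its Ricci tensor equals $(n-2)g$, so the sharp lower bound is $K=n-2$. Substituting $K=n-2$ into the displayed $\mathrm{EVI}_K$ and rearranging yields precisely~\eqref{eq:EVI}, under the same hypothesis $\calE(\nu)<\infty$.

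The only step requiring genuine care --- and the main potential obstacle --- is verifying that the abstract theorem applies verbatim here. Compactness of $\bbS^{n-1}$ makes completeness automatic and ensures that $\sigma$ is a finite reference measure with the integrability needed for $\calE$ to be well behaved and lower semicontinuous. One must also note that $\calE$ in Definition~\ref{def:heat-flow} is taken against the \emph{normalized} volume $\sigma$ rather than the Riemannian volume; the two differ by an additive constant on all of $\calP(\bbS^{n-1})$, which cancels in the differences $\calE(\nu)-\calE(\calS^t\rho_0)$ appearing in the EVI and leaves the gradient-flow structure unchanged, so no correction to the constant $n-2$ enters. Finally, I would confirm that the semigroup $\calS^t$ of Definition~\ref{def:heat-flow} is genuinely $e^{t\Delta}$: this is immediate from~\eqref{eq:heat-sg-in-SH-form}, since the spherical harmonics $Y_{l,k}$ diagonalize $\Delta$ with eigenvalues $-l(n-2+l)$, so $\calS^t$ is exactly the object to which the curvature--dimension theory applies.
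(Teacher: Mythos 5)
Your approach is essentially the paper's: both proofs reduce the statement to the known Riemannian EVI theory for the entropy gradient flow (the heat flow, as identified in Definition~\ref{def:heat-flow} and \eqref{eq:heat-sg-in-SH-form}), with the constant coming from the Ricci lower bound $\mathrm{Ric}=(n-2)g$ of the round unit sphere $\bbS^{n-1}$; your remarks about the normalized reference measure only shifting $\calE$ by an additive constant, and about $\calS^t=e^{t\Delta}$, are correct and in line with the paper.

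The one point where you are slightly cavalier is the claim that the abstract $\mathrm{EVI}_K$ statement applies verbatim ``for every $\rho_0\in\calP(M)$''. The result the paper actually cites (\cite[Remark 4.5]{erbar2010heat}) is formulated for initial data with $\calE(\rho_0)<\infty$, and since Proposition~\ref{prop:evi} is asserted for \emph{all} $\rho_0\in\calP(\bbS^{n-1})$, an extra step is needed: the paper handles it by an approximation/regularization argument citing \cite[Remark~3.4]{muratori2020gradient} (morally, the heat semigroup instantly regularizes, and one passes the EVI to the limit using lower semicontinuity of $\calE$ and continuity of $t\mapsto\calS^t\rho_0$ in $W_2$). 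Your proof would be complete if you either cite a version of the EVI theorem that explicitly covers arbitrary initial data or supply this extension argument; as written, that step is asserted rather than justified.
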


\begin{proof}
For the case $\calE(\rho_0)<\infty$ this result is~\cite[Remark 4.5]{erbar2010heat}, where 
the factor $n-2$ is the Ricci curvature of the sphere. The result can then be extended to arbitrary $\rho_0\in\calP(\bbS^{n-1})$ as described in~\cite[Remark~3.4]{muratori2020gradient}. 
\end{proof}

We will also require the following property of the heat flow.
\begin{lemma}
\label{lemma:heat}
    Let $\rho \in \calP(\bbS^{n-1})$ and let $V \in L^2$ be a zonal kernel. Then for any $\rho\in\calP(\bbS^{n-1})$
    \[
    V* \calS^s\rho \to V* \rho \quad  \text{ in  $L^2(\bbS^{n-1})$ as }s\downarrow 0.
    \]
\end{lemma}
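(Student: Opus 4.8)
The plan is to use the spherical-harmonics representation of both the heat semigroup (given in~\eqref{eq:heat-sg-in-SH-form}) and the convolution operator (Proposition~\ref{prop:measure-conv}), and then to reduce the claim to a summable dominated-convergence argument. First I would write $\rho = \sum_{l,k}\alpha_{l,k}Y_{l,k}$ with $\alpha_{l,k} = \langle\rho,Y_{l,k}\rangle$; since $\rho\in\calP(\bbS^{n-1})$ and the $Y_{l,k}$ are uniformly bounded on each $l$-shell only in $L^2$ (but here we only need $|\alpha_{l,k}|$ controlled, which follows from $\|Y_{l,k}\|_{L^\infty}<\infty$ for each fixed $(l,k)$, or more usefully from $\sum_k \alpha_{l,k}^2 \le \|Z_l(x_0,\cdot)\|_{L^\infty}$ via the addition formula). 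By Proposition~\ref{prop:measure-conv}, $V*\rho \in L^2(\bbS^{n-1})$ with $V*\rho = \sum_{l,k}\hat V_l \alpha_{l,k} Y_{l,k}$, and similarly $V*\calS^s\rho = \sum_{l,k}\hat V_l e^{-sl(n-2+l)}\alpha_{l,k}Y_{l,k}$, using that $\calS^s\rho$ is again a probability measure with harmonic coefficients $e^{-sl(n-2+l)}\alpha_{l,k}$.

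Then by Parseval,
\[
\|V*\calS^s\rho - V*\rho\|_{L^2(\bbS^{n-1})}^2 = \sum_{l,k}\hat V_l^2\bigl(1 - e^{-sl(n-2+l)}\bigr)^2\alpha_{l,k}^2.
\]
Each term tends to $0$ as $s\downarrow 0$ since $e^{-sl(n-2+l)}\to 1$ for every fixed $l$. For the dominating summable majorant, note $0\le (1-e^{-sl(n-2+l)})^2\le 1$, so each term is bounded by $\hat V_l^2\alpha_{l,k}^2$, and $\sum_{l,k}\hat V_l^2\alpha_{l,k}^2 = \|V*\rho\|_{L^2(\bbS^{n-1})}^2 < \infty$ by Proposition~\ref{prop:measure-conv}. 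Dominated convergence for series then gives $\|V*\calS^s\rho - V*\rho\|_{L^2}^2 \to 0$, which is the claim.

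The only point requiring a little care — and the main (mild) obstacle — is justifying that $\calS^s\rho$ is a probability measure whose spherical-harmonics coefficients are exactly $e^{-sl(n-2+l)}\alpha_{l,k}$, so that Proposition~\ref{prop:measure-conv} applies to it; this is exactly the content of~\eqref{eq:heat-sg-in-SH-form}, which is stated for arbitrary $\rho\in\calP(\bbS^{n-1})$, so it can be invoked directly. One should also observe that the coefficients of $\calS^s\rho$ satisfy the same bound $\sum_k(e^{-sl(n-2+l)}\alpha_{l,k})^2\le\sum_k\alpha_{l,k}^2$, so the hypothesis $\|V(x_0,\cdot)\|_{L^2}<\infty$ of Proposition~\ref{prop:measure-conv} indeed yields $V*\calS^s\rho\in L^2$ uniformly in $s$. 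Everything else is routine Parseval bookkeeping.
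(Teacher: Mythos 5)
Your proposal is correct and follows essentially the same route as the paper: decompose via Proposition~\ref{prop:measure-conv} and the spectral form~\eqref{eq:heat-sg-in-SH-form} of the heat semigroup, apply Parseval, and conclude by dominated convergence with majorant $\hat V_l^2\alpha_{l,k}^2$, which is summable because $V*\rho\in L^2$. The paper leaves the dominated-convergence step and the applicability of Proposition~\ref{prop:measure-conv} to $\calS^s\rho$ implicit, so your write-up is just a more detailed version of the same argument.
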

\begin{proof}
    From Proposition \ref{prop:measure-conv} recall that $V*\rho$ is an element of $L^2(\bbS^{n-1})$ and admits the following decomposition in the basis of spherical harmonics:
    \[
    \begin{aligned}
    \label{eq:representation}
    V*\rho =\sum_{l, k}\hat V_{l} \alpha_{l, k} Y_{l, k},
        \qquad \text{where $\alpha_{l,k}= \langle \rho,Y_{l,k}\rangle$.} 
    \end{aligned}
    \]
Hence, we obtain from~\eqref{eq:heat-sg-in-SH-form} that 
    \[
    \|V* \calS^s\rho - V* \rho\|^2_{L^2} =  \sum_{l, k}\alpha_{l, k}^2 \hat V_{l, k}^2(1 - e^{-sl(n-2+l)})^2 \to 0
    \qquad\text{as } s\downarrow 0,
    \]
    which concludes the proof.
\end{proof}
\subsection{Other auxiliary results}
We will also require the following adaptation of the Aubin-Lions lemma for the case when the direct embedding for the derivative is not available. 
\begin{proposition}[{\cite[Theorem 2]{rossi2003tightness}}]
\label{prop:savare}
    Let $X$ be a separable Banach space and consider a family $\Lambda$ of $X$-valued measurable functions. Assume that there exists a lower-semicontinuous functional $\calF: X \to \bbR_\infty$ with compact sublevel sets. In addition, assume that there exists a semi-norm $g$ compatible with $\calF$ in the sense that for all $u, v: \calF(u), \calF(v) < \infty$ it holds that $g(u, v) = 0\  \Rightarrow \ u = v$ a.e. on $[0, T]$. If the family $\Lambda$ satisfies the following two conditions:
    \begin{itemize}
        \item (compactness in space) 
        \[
        \sup_{u\in \Lambda} \int_0^T \calF(u(t))dt < \infty
        \]
        \item (equicontinuity)
        \[
        \lim_{h\downarrow 0}\sup_{u\in \Lambda}\int_0^{T-h} g(u(t+h), u(t))dt = 0,
        \]
    \end{itemize}
    then it is relatively compact in measure on $[0, T]\times X $.
\end{proposition}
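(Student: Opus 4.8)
Proposition~\ref{prop:savare} is Theorem~2 of~\cite{rossi2003tightness}, so nothing really needs to be reproved here; let me nonetheless sketch the argument one would run, since its shape explains why the hypotheses are the natural ones. The plan is to establish a compactness-in-measure analogue of the Fr\'echet--Kolmogorov criterion: combine \emph{tightness in the target} $X$, extracted from the space-compactness bound on $\calF$, with a \emph{time-translation equicontinuity} supplied by the semi-norm $g$, and then invoke a generalised Arzel\`a--Ascoli / Kolmogorov argument with values in compact metric spaces.

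First I would reduce to compact targets. By Chebyshev's inequality applied to $\sup_{u\in\Lambda}\int_0^T\calF(u(t))\,dt<\infty$, for every $\delta>0$ there is $M_\delta<\infty$ such that, writing $K_\delta:=\{x\in X:\calF(x)\le M_\delta\}$ --- a compact subset of $X$, metrizable by some distance $d_\delta$ --- every $u\in\Lambda$ obeys $\mathcal{L}^1(\{t\in[0,T]:u(t)\notin K_\delta\})\le\delta$. On the compact set $K_\delta$ the lower semicontinuity of $g$, combined with its compatibility with $\calF$ (which forces $g$ to separate the points of $K_\delta\subset\{\calF<\infty\}$), yields via a contradiction-and-compactness argument a modulus $\theta_\delta$ with $\theta_\delta(\eta)\to0$ as $\eta\downarrow0$ such that $g(x,y)\le\eta$ and $x,y\in K_\delta$ imply $d_\delta(x,y)\le\theta_\delta(\eta)$. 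This is the device that converts the hypothesis on $g$ into genuine $d_\delta$-equicontinuity.

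Next I would run the Ascoli-type argument. Given a sequence $(u_k)_k\subset\Lambda$, fix $\delta$, discard the bad sets of measure $\le\delta$, and combine the equicontinuity hypothesis with the modulus $\theta_\delta$ to get $\lim_{h\downarrow0}\sup_k\int d_\delta(u_k(t+h),u_k(t))\,dt=0$ on the good part of $[0,T]$; a standard compactness-in-measure lemma for functions valued in the compact metric space $(K_\delta,d_\delta)$ then extracts a subsequence converging in measure, up to an error controlled by $\delta$. A diagonal extraction over $\delta=\delta_j\downarrow0$ --- using that the $K_{\delta_j}$ exhaust the values of the functions up to sets of arbitrarily small measure --- produces one subsequence of $(u_k)_k$ converging in measure on $[0,T]\times X$ to a measurable limit, which is the asserted relative compactness; compatibility of $g$ with $\calF$ is again what makes this limit unique, so that the notion of convergence is well posed.

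The single genuinely delicate point --- and the reason~\cite{rossi2003tightness} needs a full paper rather than a one-line appeal to Aubin--Lions--Simon --- is that $g$ is only a semi-norm and need not generate, or even be comparable to, the topology of $X$ away from the sublevels of $\calF$. The lower-semicontinuity-plus-point-separation lemma of the second paragraph is exactly what repairs this, and is the step I would write out with the most care; the remainder is bookkeeping with the $\delta$-exhaustion and with the standard characterisation of convergence in measure.
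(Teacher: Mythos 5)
The paper offers no proof of this proposition at all: it is quoted verbatim (up to notation) from Rossi--Savar\'e \cite{rossi2003tightness}, so there is no in-paper argument to compare yours against. Your sketch does capture the structure of the cited proof --- Chebyshev reduction to compact sublevel sets of $\calF$, the compactness-plus-lower-semicontinuity lemma turning smallness of $g$ into metric smallness on those sublevels, and an Ascoli/Fr\'echet--Kolmogorov-type extraction with a diagonal argument over the exhaustion --- and you correctly identify the separation lemma as the crux. Two small caveats if you were to write it out: the modulus argument requires $g$ to be lower semicontinuous, which is an explicit hypothesis in \cite{rossi2003tightness} but is not stated in the paper's formulation of the proposition (the ``$u=v$ a.e.\ on $[0,T]$'' in the compatibility condition is also a misprint, since $u,v$ are points of $X$ there); and ``discard the bad sets'' cannot be taken literally, because the exceptional sets $\{t: u_k(t)\notin K_\delta\}$ depend on $k$, so the translation estimate must be run directly in measure, splitting the event $\{d_\delta(u_k(t+h),u_k(t))>\theta_\delta(\eta)\}$ into the $k$-dependent bad sets (of measure at most $2\delta$) and the set where both values lie in $K_\delta$ but $g$ exceeds $\eta$, controlled by Chebyshev from the equicontinuity hypothesis.
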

We remark that both Proposition \ref{prop:savare} and the Aubin-Lions lemma rely on the combination of the compactness in space (tightness) and equicontinuity arguments and thus may be interpreted as refined versions of the Arzela-Ascoli theorem.

\section{Main result}
\label{sec:convergence}
Given the family $\rho^\varepsilon$ of weak solutions to \eqref{eq:intro-main}, constructed in Proposition \ref{prop:existence-ae}, we construct the corresponding family of spatially regularized curves, $v^\varepsilon: [0, \infty) \to \calP(\bbS^{n-1})$ defined as
\[
v^\varepsilon(t) := \sqrt[*]{V_\varepsilon} *\rho^\varepsilon(t),
\]
for all $t\geq0$. Following the approach of \cite{burger2023porous}, we prove that both families $(\rho^\varepsilon)_{\varepsilon \in\bbR_+}$ and $(v^\varepsilon)_{\varepsilon \in\bbR_+}$ are compact in appropriate topologies. We then show that limits of $\rho^\varepsilon$ and $v^\varepsilon$ coincide, and that every limit point is a weak solution to \eqref{eq:intro-diffuse}.

We now present our main result, Theorem \ref{th:main-diff}, but postpone the proof to Section~\ref{sec:proof-main} which comes after the compactness arguments proved in Section~\ref{sec:compactness}.

\begin{theorem}[Convergence of \ref{eq:intro-main} to \eqref{eq:intro-diffuse}]
\label{th:main-diff}  
Let the interaction kernels $W$, $V_\varepsilon$ satisfy Assumptions \ref{assum:fixed-kernel}, \ref{ass:repulsive} and \ref{ass:conv-root}. Let $(\rho^\varepsilon)_{\varepsilon \in \bbR_+}$ be a family of weak solutions of~\eqref{eq:intro-main} with $\rho_0 \in L^2(\bbS^{n-1})\cap \calP(\bbS^{n-1})$. Then there exists a subsequence $\rho^{\varepsilon_k}$ and a weak solution $\rho$ of~\eqref{eq:intro-diffuse} such that $\rho^{\varepsilon_k}$  converges to $\rho$. The type of convergence is specified in Lemmas~\ref{lem:comp-rho} and~\ref{lem:comp-v} below.
\end{theorem}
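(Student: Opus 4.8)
\medskip
\noindent\textbf{Proof plan.} The strategy, following~\cite{burger2023porous}, is to pass to the limit $\varepsilon\to0$ in the weak formulation~\eqref{eq:xi} of~\eqref{eq:intro-main}. Alongside the solutions $\rho^\varepsilon$ we introduce the regularized curves $v^\varepsilon:=\sqrt[*]{V_\varepsilon}*\rho^\varepsilon$, which are again probability densities since $\sqrt[*]{V_\varepsilon}$ is non-negative and mass-preserving (Assumptions~\ref{ass:repulsive}, \ref{ass:conv-root}). We establish compactness of both families, show that they have a common limit, and then identify the limit as a weak solution of~\eqref{eq:intro-diffuse}. For $(\rho^\varepsilon)$, the uniform continuity estimate~\eqref{eq:weak-continuity} (which survives the limit $\tau\to0$) makes each $\rho^\varepsilon$ a uniformly $\tfrac12$-Hölder curve into the compact space $(\calP(\bbS^{n-1}),W_2)$, so a refined Arzelà–Ascoli argument gives a subsequence $\rho^{\varepsilon_k}$ converging uniformly in $W_2$ on compact time intervals to a narrowly continuous $\rho$ with $\rho(0)=\rho_0$; this is the content of Lemma~\ref{lem:comp-rho}.

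For $(v^\varepsilon)$ I would apply Proposition~\ref{prop:savare} in $X=L^2(\bbS^{n-1})$ with $\calF(u)=\|u\|_{H^1(\bbS^{n-1})}^2$ (compact sublevels by Rellich) and $g=W_2$ as the compatible pseudo-distance. The equicontinuity input is immediate: Lemma~\ref{lem:was-bound} applies to the non-negative mass-preserving kernel $\sqrt[*]{V_\varepsilon}$, so $W_2(v^\varepsilon(t+h),v^\varepsilon(t))\le C_0\,W_2(\rho^\varepsilon(t+h),\rho^\varepsilon(t))\lesssim\sqrt h$ uniformly in $\varepsilon$. The space input — a uniform bound on $\int_0^T\|v^\varepsilon(t)\|_{H^1}^2\,dt$ — combines the pointwise $L^2$ bound~\eqref{eq:conv-bound-existence-theorem} with a flow-interchange estimate against the heat flow in the spirit of~\cite{matthes2009family}: testing the minimality of the minimizing-movement iterate $\rho^\varepsilon_{\tau,k}$ against $\calS^s\rho^\varepsilon_{\tau,k}$, bounding the Wasserstein side by the EVI~\eqref{eq:EVI} and computing the energy side via~\eqref{eq:heat-sg-in-SH-form}, Corollary~\ref{cor:flow-interchange} and $\Delta(W*\rho)=(\Delta W)*\rho$, one obtains $\tau\sum_k\|\nabla v^\varepsilon_{\tau,k}\|_{L^2}^2\le T\|\Delta W\|_{L^\infty(\bbS^{n-1})}+\calE(\rho_0)$, which, using $0\le\calE(\rho_0)\le\|\rho_0\|_{L^2}^2$ on the sphere and weak lower semicontinuity as $\tau\to0$, gives the required uniform bound. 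Proposition~\ref{prop:savare} together with this bound then yields, along a further subsequence, $v^{\varepsilon_k}\to v$ strongly in $L^2_{\mathrm{loc}}(0,\infty;L^2(\bbS^{n-1}))$ and weakly in $L^2_{\mathrm{loc}}(0,\infty;H^1(\bbS^{n-1}))$ (Lemma~\ref{lem:comp-v}). To see $v=\rho$, observe that for each $t$ and $\varphi\in C(\bbS^{n-1})$ we have $\int\varphi\,dv^\varepsilon(t)=\int(\sqrt[*]{V_\varepsilon}*\varphi)\,d\rho^\varepsilon(t)\to\int\varphi\,d\rho(t)$ by Lemma~\ref{lemma:uniform} and narrow convergence, so $v^{\varepsilon_k}(t)\rightharpoonup\rho(t)$ narrowly; combined with $v^{\varepsilon_k}(t)\to v(t)$ in $L^2$ for a.e.\ $t$, this forces $\rho_t=v_t$ for a.e.\ $t$, whence $\rho_t$ has an $H^1$ density for a.e.\ $t$ and $\rho\in L^2_{\mathrm{loc}}(0,\infty;H^1(\bbS^{n-1}))$.

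It remains to pass to the limit in~\eqref{eq:xi}. Splitting $U_\varepsilon=W+V_\varepsilon$, the $W$-term converges to the first term of~\eqref{eq:solADE} because $(x,y)\mapsto g_x(\nabla\varphi(x),\nabla_xW(x,y))$ is bounded and continuous, $\rho^{\varepsilon_k}_s\otimes\rho^{\varepsilon_k}_s\rightharpoonup\rho_s\otimes\rho_s$ uniformly in $s$, and dominated convergence applies in $s$. For the repulsive term we use $V_\varepsilon=\sqrt[*]{V_\varepsilon}*\sqrt[*]{V_\varepsilon}$, so $V_\varepsilon*\rho^\varepsilon_s=\sqrt[*]{V_\varepsilon}*v^\varepsilon_s$ with $v^\varepsilon_s\in H^1(\bbS^{n-1})$ (Corollary~\ref{prop:conv-root} and the spectral bound in Lemma~\ref{lem:conv-sq-root}), and move one square-root convolution onto a gradient via Proposition~\ref{prop:grad}, rewriting
\[
\iint g_x\bigl(\nabla\varphi(x),\nabla_xV_\varepsilon(x,y)\bigr)\,d\rho^\varepsilon_s(x)\,d\rho^\varepsilon_s(y)
=\bigl\langle\Psi^\varepsilon_s,\ \nabla v^\varepsilon_s\bigr\rangle_{L^2(T\bbS^{n-1})},
\]
where $\Psi^\varepsilon_s(y):=\int\sqrt[*]{V_\varepsilon}(x,y)\,\Pi_{yx}\nabla\varphi(x)\,d\rho^\varepsilon_s(x)$ is the parallel-transport smoothing of the vector measure $\nabla\varphi\,d\rho^\varepsilon_s$. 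Writing $\Pi_{yx}\nabla\varphi(x)=\nabla\varphi(y)+O(\dist(x,y))$ and using $\int\sqrt[*]{V_\varepsilon}(x,y)\,d\rho^\varepsilon_s(x)=v^\varepsilon_s(y)$ gives $\Psi^\varepsilon_s=\nabla\varphi\cdot v^\varepsilon_s+r^\varepsilon_s$; one then shows $r^\varepsilon\to0$ in $L^2_{\mathrm{loc}}(0,\infty;L^2(T\bbS^{n-1}))$ by combining the concentration of $\sqrt[*]{V_\varepsilon}$ (Lemma~\ref{lemma:uniform}), Young's inequality for convolution with a zonal kernel, and the uniform $L^2$ bound on $v^\varepsilon$. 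Since $v^{\varepsilon_k}\to\rho$ strongly in $L^2_{\mathrm{loc}}L^2$, we get $\Psi^{\varepsilon_k}\to\nabla\varphi\cdot\rho$ strongly there, and as $\nabla v^{\varepsilon_k}\rightharpoonup\nabla\rho$ weakly the pairing converges to $\int_0^t\!\int g_x(\nabla\varphi(x),\nabla\rho_s(x))\,\rho_s(x)\,d\sigma(x)\,ds$, which is exactly the diffusion term of~\eqref{eq:solADE}. Together with $\int\varphi\,d\rho^{\varepsilon_k}_t\to\int\varphi\,d\rho_t$ and the regularity established above, this shows $\rho$ is a weak solution of~\eqref{eq:intro-diffuse}.

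The main obstacle is the repulsive term: carrying out the curved-space manipulation of Proposition~\ref{prop:grad} so that the only quantity carrying a gradient is $v^\varepsilon$ — for which merely \emph{weak} $L^2_tH^1_x$ compactness is available — so that it is paired against the \emph{strongly} convergent $\Psi^\varepsilon$, and then verifying that strong $L^2_tL^2_x$ convergence (equivalently, that the commutator $r^\varepsilon$ vanishes). The latter is where the structural Assumption~\ref{ass:repulsive} and the uniform $L^2$ control on $v^\varepsilon$ are essential, and it mirrors the corresponding step of~\cite{burger2023porous}. Everything else — the Arzelà–Ascoli compactness, the flow-interchange $H^1$-in-time bound, and the limit of the $W$-term — is comparatively routine.
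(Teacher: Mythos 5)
Your proposal is correct and follows essentially the same route as the paper: compactness of $\rho^\varepsilon$ via the uniform $W_2$-continuity estimate and Arzelà--Ascoli, the flow-interchange bound against the heat flow combined with Proposition~\ref{prop:savare} and Lemma~\ref{lem:was-bound} to get strong $L^2_tL^2_x$ compactness of $v^\varepsilon=\sqrt[*]{V_\varepsilon}*\rho^\varepsilon$, identification of the two limits via Lemma~\ref{lemma:uniform}, and the parallel-transport rewriting of the repulsive term with a vanishing residual (the paper's Lemma~\ref{lemma:z} and~\eqref{eqdef:residual}) so that the gradient lands only on the weakly compact $v^\varepsilon$. The only inessential deviations are your use of $W_2$ rather than $W_1$ as the compatible pseudo-distance in Proposition~\ref{prop:savare} and a slightly different packaging of the residual estimate; both are interchangeable with the paper's choices.
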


\begin{remark}[Uniqueness of solutions of~\eqref{eq:intro-diffuse}]
    The question of uniqueness of solutions of~\eqref{eq:intro-diffuse} is subtle. In general, weak solutions may not be unique, and an entropy condition may be necessary to obtain uniqueness (see e.g.~\cite{Carrillo99,BurgerCapassoMorale2007}). Burger, Capasso, and Morale~\cite{BurgerCapassoMorale2007} prove existence and uniqueness of entropy solutions for similar equations in flat space, and we conjecture that similar results hold on the sphere. 
\end{remark}

\subsection{Compactness of $\rho^\varepsilon$ and $v^\varepsilon$}
\label{sec:compactness}
\begin{lemma}[Compactness of $\{\rho^\varepsilon\}$]
\label{lem:comp-rho}
    Let $\{\rho^\varepsilon\}_{\varepsilon>0}$ be a family of weak solutions of~\eqref{eq:intro-main}, then there exists a subsequence $\rho^{\varepsilon_k}$ and a weakly continuous curve $\rho: [0, T] \to \calP(\bbS^{n-1})$ such that $\rho^{\varepsilon_k}(t) \longrightharpoonup\rho(t)$ for all $t\in [0, T]$.
\end{lemma}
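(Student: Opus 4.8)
The plan is to apply a refined Arzel\`a--Ascoli argument in the space of probability measures. Recall from the proof of Proposition~\ref{prop:existence-ae} that each $\rho^\varepsilon$ arises as the limit of the piecewise-constant interpolants $\rho^\varepsilon_\tau$ of a minimizing-movement scheme, and that the continuity estimate~\eqref{eq:weak-continuity} survives in the limit $\tau_\ell\to0$, giving a H\"older-$1/2$ estimate
\[
W_2\bigl(\rho^\varepsilon(s),\rho^\varepsilon(t)\bigr)\leq c\,\sqrt{t-s}
\qquad\text{for all }0\leq s\leq t,
\]
with $c>0$ independent of $\varepsilon$ (this uses only the uniform energy bound of Lemma~\ref{l:energy-bound}, which holds because $\rho_0\in L^2$). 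Thus the family $(\rho^\varepsilon)_{\varepsilon>0}$ is equicontinuous on $[0,T]$ with values in the metric space $(\calP(\bbS^{n-1}),W_2)$.

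First I would note that $(\calP(\bbS^{n-1}),W_2)$ is a compact metric space: since $\bbS^{n-1}$ is compact, $\calP(\bbS^{n-1})$ is weakly compact by Prokhorov's theorem, and on a compact base space the $W_2$-topology coincides with the topology of weak convergence, so $(\calP(\bbS^{n-1}),W_2)$ is itself compact. In particular, for each fixed $t$ the set $\{\rho^\varepsilon(t):\varepsilon>0\}$ is precompact. Combining pointwise precompactness with the uniform equicontinuity above, the Arzel\`a--Ascoli theorem for maps from $[0,T]$ into a compact metric space yields a subsequence $\varepsilon_k\to0$ and a limit curve $\rho\in C([0,T];(\calP(\bbS^{n-1}),W_2))$ such that $W_2(\rho^{\varepsilon_k}(t),\rho(t))\to0$ uniformly in $t\in[0,T]$. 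Since $W_2$-convergence on a compact space is equivalent to weak (narrow) convergence, this gives $\rho^{\varepsilon_k}(t)\longrightharpoonup\rho(t)$ for every $t\in[0,T]$, and $\rho$ inherits weak continuity from the uniform convergence. Passing to a diagonal subsequence over an exhausting sequence $T\uparrow\infty$ upgrades this to a curve on $[0,\infty)$ if desired.

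I do not expect any serious obstacle here; the lemma is essentially a packaging of the uniform-in-$\varepsilon$ equicontinuity estimate already extracted in Proposition~\ref{prop:existence-ae} together with compactness of $(\calP(\bbS^{n-1}),W_2)$. The only point requiring a little care is making sure the continuity estimate~\eqref{eq:weak-continuity} is genuinely uniform in $\varepsilon$ and passes to the $\tau\to0$ limit --- this is where the hypothesis $\rho_0\in L^2(\bbS^{n-1})$ enters, via the bound $\calF_\varepsilon(\rho_0)\le \tfrac12\|W\|_\infty + C\|\rho_0\|_{L^2}^2$ of Lemma~\ref{l:energy-bound}, which is independent of $\varepsilon$ by Assumption~\ref{ass:repulsive}. (The genuinely hard compactness statement, needed to identify the limit as a solution of~\eqref{eq:intro-diffuse}, is the $L^2_{\mathrm{loc}}(0,\infty;H^1)$-type compactness of $v^\varepsilon=\sqrt[*]{V_\varepsilon}*\rho^\varepsilon$ in Lemma~\ref{lem:comp-v}, which is proved separately by the flow-interchange technique.)
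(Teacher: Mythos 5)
Your proposal is correct and follows essentially the same route as the paper: the paper's proof likewise rests on the uniform-in-$\varepsilon$ continuity estimate~\eqref{eq:weak-continuity} combined with a refined Arzel\`a--Ascoli argument in $(\calP(\bbS^{n-1}),W_2)$ (cited from \cite{ambrosio2005gradient} and adapted from \cite{burger2023porous}), with compactness of the sphere supplying pointwise precompactness. Your write-up simply fills in the details (passage of the H\"older-$1/2$ bound to the $\tau\to0$ limit, equivalence of $W_2$ and narrow convergence on a compact base) that the paper leaves to the references.
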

\begin{proof}
    We adapt the arguments of the proof of \cite[Proposition 4.1]{burger2023porous}. Since the stability of optimal transport plans \cite[Theorem 5.20]{Villani2008} holds on arbitrary Polish spaces as well as the used version of Arzela-Ascoli lemma \cite[Proposition 3.3.1]{ambrosio2005gradient}, in view of the estimate \eqref{eq:weak-continuity} we get the result.
\end{proof}

\begin{lemma}[Compactness of $\{v^\varepsilon\}$]
    \label{lemma:bound-v}
    Let $\rho_0 \in L^2(\bbS^{n-1}) \cap \calP(\bbS^{n-1})$, and let $\rho^\e$ be a family of solutions of \eqref{eq:intro-main} with the initial condition $\rho_0$. Set $v^\e := \sqrt[*]{V_\e}{*}\rho^\e$. Then  there exists a constant $C$ such that for any $T\in \bbR_+$ and $\e>0$ we have 
    \[
    \|v^\varepsilon\|_{L^2(0, T; H^1(\bbS^{n-1}))} \leq  CT.
    \]
    Moreover, for any sequence $\varepsilon_k \to 0$ there exists a subsequence $\e_{k_\ell}$ and a curve $\tilde v \in L^2_{\mathrm{loc}}(0,\infty, H^1(\bbS^{n-1}))$ such that  for each $T>0$ we have $v^{\varepsilon_{k_\ell}} \stackrel{w}{\to} \tilde v$ in $L^2(0, T; H^1(\bbS^{n-1}))$.
\end{lemma}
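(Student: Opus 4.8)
The plan is to establish the $L^2(0,T;H^1)$ bound by a flow-interchange argument using the heat flow, and then extract a weakly convergent subsequence by the Banach--Alaoglu theorem. First I would fix $\e>0$ and work with the minimizing-movement interpolant $\rho^\e_\tau$ constructed in the proof of Proposition~\ref{prop:existence-ae}. For each time step, I would perturb $\rho^\e_{\tau,k}$ along the heat flow $\calS^s$ and use the minimality inequality
$\calF_\e(\calS^s\rho^\e_{\tau,k}) + \frac{1}{2\tau}W_2^2(\calS^s\rho^\e_{\tau,k},\rho^\e_{\tau,k-1}) \geq \calF_\e(\rho^\e_{\tau,k}) + \frac{1}{2\tau}W_2^2(\rho^\e_{\tau,k},\rho^\e_{\tau,k-1})$.
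Dividing by $s$, sending $s\downarrow 0$, and using the EVI (Proposition~\ref{prop:evi}) to control the Wasserstein term, the key point is to compute the dissipation of $\calF_\e$ along the heat flow. Using the representation \eqref{eq:alt-formulation-Fe} of $\calF_\e$ together with the spectral form \eqref{eq:heat-sg-in-SH-form} of $\calS^s$ and Corollary~\ref{prop:conv-root}, the repulsive part $\frac12\|\sqrt[*]{V_\e}*\rho\|_{L^2}^2$ dissipates at rate $-\sum_{l,k}\lambda_l\hat V_{\e,l}\alpha_{l,k}^2 = \|\nabla v^\e\|_{L^2(T\bbS^{n-1})}^2$ by Corollary~\ref{cor:flow-interchange}, while the fixed-interaction part contributes a term controlled by $\|\Delta W\|_{L^\infty}$ (Assumption~\ref{assum:fixed-kernel}) and the entropy term is bounded below uniformly since $\bbS^{n-1}$ has finite volume.

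Concretely, summing these per-step estimates over $k$ and telescoping yields a discrete analogue of
$\int_0^T \|\nabla v^\e_\tau(t)\|_{L^2(T\bbS^{n-1})}^2\,dt \lesssim \calF_\e(\rho_0) - \inf\calF_\e + \calE(\rho^\e_\tau(T)) - \calE(\rho_0) + CT$,
where all terms on the right are bounded uniformly in $\e$ and $\tau$: $\calF_\e(\rho_0)$ by Lemma~\ref{l:energy-bound} (using $\rho_0\in L^2$), the entropy difference by the uniform lower bound on $\calE$ and a uniform upper bound on $\calE(\rho_0)$ (or by absorbing it using that $\calE$ decreases along $\calS^s$), and the $CT$ term from the $W$ and curvature contributions. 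Passing to the limit $\tau\to0$ along the subsequence from Proposition~\ref{prop:existence-ae} and using lower semicontinuity of $v\mapsto\int_0^T\|\nabla v\|^2$ under the weak convergence $\rho^\e_\tau\rightharpoonup\rho^\e$ (which passes to $v^\e_\tau\rightharpoonup v^\e$ via Corollary~\ref{prop:conv-root}), I obtain $\|\nabla v^\e\|_{L^2(0,T;L^2(T\bbS^{n-1}))}^2 \leq CT$. Combined with the uniform $L^2$ bound \eqref{eq:conv-bound-existence-theorem} on $v^\e(t)=\sqrt[*]{V_\e}*\rho^\e(t)$ integrated over $[0,T]$, this gives $\|v^\e\|_{L^2(0,T;H^1(\bbS^{n-1}))}^2 \lesssim T(1+T)$, hence the claimed bound after adjusting the constant (and noting the bound for each fixed $T$ assembles into the $L^2_{\mathrm{loc}}$ statement).

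For the second assertion, since $L^2(0,T;H^1(\bbS^{n-1}))$ is a Hilbert space and $(v^{\e_k})$ is bounded there uniformly for each $T$, the Banach--Alaoglu theorem gives a weakly convergent subsequence on $(0,T)$; a diagonal argument over $T=1,2,3,\dots$ produces a single subsequence $\e_{k_\ell}$ and a limit $\tilde v$ with $v^{\e_{k_\ell}}\stackrel{w}{\to}\tilde v$ in $L^2(0,T;H^1(\bbS^{n-1}))$ for every $T$, so $\tilde v\in L^2_{\mathrm{loc}}(0,\infty;H^1(\bbS^{n-1}))$. The main obstacle I anticipate is the rigorous justification of the flow-interchange step at the discrete level — in particular, verifying that $s\mapsto\calF_\e(\calS^s\rho^\e_{\tau,k})$ is differentiable at $s=0$ with the derivative given by the spectral formula (this needs $\rho^\e_{\tau,k}\in L^2$, which follows from the iterated $L^2$ bound in Step~5 of Proposition~\ref{prop:existence-ae}, together with Lemma~\ref{lemma:heat}), and handling the entropy term, which is only lower semicontinuous and whose per-step increments must be summed carefully rather than term-by-term; the curvature correction from the EVI is harmless since it only improves the inequality.
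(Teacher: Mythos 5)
Your proposal follows essentially the same route as the paper's proof: minimizing-movement interpolants, flow interchange along the heat flow with the EVI of Proposition~\ref{prop:evi}, the spectral dissipation identity of Corollary~\ref{cor:flow-interchange} for the repulsive part, the $\|\Delta W\|_{L^\infty}$ control of the fixed part, telescoping of the entropy (bounded below by $0$ and above at $t=0$ since $\rho_0\in L^2$), lower semicontinuity to pass $\tau\to0$, and Banach--Alaoglu (plus a diagonal argument) for the weak compactness in $\varepsilon$. The only cosmetic deviations are that you bound the $L^2(0,T;L^2)$ part via \eqref{eq:conv-bound-existence-theorem} instead of the monotonicity of $\calF_\varepsilon$ along the scheme, and the entropy difference in your displayed estimate has its sign flipped (it should read $\calE(\rho_0)-\calE(\rho^\varepsilon_\tau(T))$, which is what your subsequent justification actually uses), neither of which affects correctness.
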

\begin{proof}
    Throughout this proof we fix the final time $T>0$.
    Consider the sequence of interpolants $\rho_{\tau, k}^\varepsilon$ constructed in the proof of Prop.~\ref{prop:existence-ae}. To bound the $L^2(0,T; L^2(\bbS^{n-1}))$ norm of $v^\varepsilon$ note that
    \[
    \begin{aligned}      
    \MoveEqLeft\|v^\varepsilon_{\tau, k}\|_{L^2(0, T;  L^2(\bbS^{n-1}))}^2 = \int_0^T \iint V_\varepsilon(x, y) \rho_{\tau, k}^\varepsilon(t)(x) \rho_{\tau, k}^\varepsilon(t)(y)d\sigma(x)d\sigma(y)dt \\
    &= \int_0^T \calF_\varepsilon(\rho_{\tau, k}^\varepsilon(t)) - \iint W(x, y) \rho_{\tau, k}^\varepsilon(t)(x) \rho_{\tau, k}^\varepsilon(t)(y)d\sigma(x)d\sigma(y)dt \\
    &\leq T\calF_\varepsilon(\rho_0) + T\|W\|_\infty \leq CT(\|\rho_0\|^2_{L^2(\bbS^{n-1})}+1) + T\|W\|_{L^\infty},
    \end{aligned}
    \]
    since $\calF_\varepsilon(\rho_0)$ is decreasing along every curve $\rho^\varepsilon_{\tau, k}$. The sequence $v^\varepsilon_{\tau, k}$ is bounded in $L^2(0, T;  L^2(\bbS^{n-1}))$ and thus by the Banach-Alaoglu theorem there exists a weakly convergent subsequence and a curve $\tilde v^\varepsilon$ such that $v^\varepsilon_{\tau_k} \to \tilde v^\varepsilon$ weakly in $L^2(0, T;  L^2(\bbS^{n-1}))$. By uniqueness of the limit we conclude that $\tilde v^\varepsilon = \sqrt[*]{V_\varepsilon} * \tilde \rho^\varepsilon$ and since the norm is lower-semicontinuous we obtain the following bound:
    \[
    \|\tilde v^\varepsilon\|_{L^2(0, T;  L^2(\bbS^{n-1}))}^2  \leq CT(\|\rho_0\|^2_{L^2(\bbS^{n-1})}+1) + T\|W\|_{L^\infty}.
    \]
    To bound the norm of the gradient $\nabla v^\varepsilon$ we use the flow interchange technique introduced in \cite{matthes2009family}. In particular, we consider the measure $\calS^s\rho_{\tau, k}^\varepsilon$ as a competitor of $\rho_{\tau, k}^\varepsilon$. Let us denote the evolution of the free energy $\calF_\varepsilon$ along the heat flow by 
    \[
    \rmD_\calE \calF_\varepsilon(\rho) := \limsup_{s\downarrow 0} \frac{\calF_\varepsilon(\rho) - \calF_\varepsilon(S^s\rho)}{s} = \limsup_{s\downarrow 0} \int_0^1 -\frac{d}{dz}\Big|_{z=ts}\calF_\varepsilon(\calS^{z}\rho)dt.
    \]
    Since $W \in C^2$, the integration by parts gives the following bound on $\rmD_\calE \calF_\varepsilon$, where the term corresponding to the fixed interaction kernel $W$ is independent of $\rho$: %
    \[
    \begin{aligned}
     \MoveEqLeft \rmD_\calE \calF_\varepsilon(\rho) \geq  \liminf_{s\downarrow 0} \int_0^1 -\frac{1}{2}\frac{d}{dz}\Big|_{z=ts}\left(\iint W(x, y) (\calS^{z}\rho)(x) (\calS^{z}\rho)(y)d\sigma(x)d\sigma(y)\right)dt \\
     &\qquad + \limsup_{s\downarrow 0} \int_0^1 -\frac{1}{2}\frac{d}{dz}\Big|_{z=ts}\left(\iint V_\varepsilon(x, y) (\calS^{z}\rho)(x) (\calS^{z}\rho)(y)d\sigma(x)d\sigma(y)\right)dt\\
     &= \liminf_{s\downarrow 0} \int_0^1 \left(\int g_x \left( \nabla (W *(\calS^{z}\rho))(x), \nabla (\calS^{z}\rho) (x))\right)d\sigma(x)\right)\Big|_{z=ts}dt \\
     &\qquad + \limsup_{s\downarrow 0} \int_0^1 \left(\int g_x \left( \nabla (V_\varepsilon *(\calS^{z}\rho))(x), \nabla (\calS^{z}\rho) (x))\right)d\sigma(x)\right)\Big|_{z=ts}dt \\
     &\geq - \|\Delta W\|_{L^\infty} \int_0^1 \left(\iint(\calS^{z}\rho)(x) (\calS^{z}\rho)(y)d\sigma(x)d\sigma(y)\right)\Big|_{z=ts}dt \\
    & \qquad+ \limsup_{s\downarrow 0} -\int_0^1 \int (V_\e *(\calS^{z}\rho))(x) \Delta (\calS^{z}\rho)(x) d\sigma(x)\Big|_{z=ts}dt\\
     &\geq 
    \limsup_{s\downarrow 0} -\int_0^1 \int (V_\e *(\calS^{z}\rho))(x) \Delta (\calS^{z}\rho)(x) d\sigma(x)\Big|_{z=ts}dt
     - \|\Delta W\|_{L^\infty}.
    \end{aligned}
    \] 
    And since $\calS^{z}\rho \in C^\infty$ for arbitrary $\rho$, application of Corollary \ref{cor:flow-interchange} gives the following inequality:
    \begin{equation}
    \label{eq:v-1}
    \rmD_\calE \calF_\varepsilon(\rho) 
    \geq 
    \limsup_{s\downarrow 0} \int_0^1 \left\|\nabla \left(\sqrt[*]{V_\varepsilon} *(\calS^{z}\rho)\right)\right\|^2_{L^2(T\bbS^{n-1})}\Big|_{z=ts}dt - \|\Delta W\|_{L^\infty}.
    \end{equation}
    By construction, $\rho_{\tau, k}^\varepsilon$ satisfies
    \[
    \frac{1}{2\tau}W_2^2(\rho_{\tau, k}^\varepsilon, \rho_{\tau, k-1}^\varepsilon) + \calF_\varepsilon(\rho_{\tau, k}^\varepsilon) \leq \frac{1}{2\tau}W_2^2(\calS^s\rho_{\tau, k}^\varepsilon, \rho_{\tau, k-1}^\varepsilon) + \calF_\varepsilon(\calS^s\rho_{\tau, k}^\varepsilon).
    \]
    After rearranging, multiplying by $\tau$ and dividing by $s$ we obtain
    \[
    \tau \frac{\calF_\varepsilon(\rho_{\tau, k}^\varepsilon) - \calF_\varepsilon(S^s\rho_{\tau, k}^\varepsilon)}{s} \leq \frac1{2s}\left(W_2^2(\calS^s\rho_{\tau, k}^\varepsilon, \rho_{\tau, k-1}^\varepsilon) - W_2^2(\rho_{\tau, k}^\varepsilon, \rho_{\tau, k-1}^\varepsilon) \right),
    \]
    and after passing $s\to 0$ by definition of $\rmD_\calE \calF_\varepsilon$ we get
    \[
    \tau \rmD_\calE \calF_\varepsilon(\rho^\varepsilon_{\tau, k}) \leq \frac12\frac{d^+}{ds}\left(W_2^2(\calS^s\rho_{\tau, k}^\varepsilon, \rho_{\tau, k-1}^\varepsilon)\right)\Big|_{s= 0}.
    \]
    Assuming that $\calE(\rho^\varepsilon_{\tau, k-1}) < \infty$, the heat flow satisfies the EVI~\eqref{eq:EVI}, and after taking the $\limsup$ as $s\downarrow 0$ we thus obtain
    \begin{align}
    \tau \rmD_\calE \calF_\varepsilon(\rho^\varepsilon_{\tau, k}) &\leq \calE(\rho^\varepsilon_{\tau, k-1}) - \liminf_{s\downarrow 0}\calE(\calS^s\rho^\varepsilon_{\tau, k}) -\liminf_{s\downarrow 0}\frac{n-2}{2}W_2^2(\calS^s\rho^\varepsilon_{\tau, k}, \rho^\varepsilon_{\tau, k-1}) \notag \\
    &\leq \calE(\rho^\varepsilon_{\tau, k-1}) - \calE(\rho^\varepsilon_{\tau, k}). \label{eq:v-2}
    \end{align}
    Combining inequalities \eqref{eq:v-1} and \eqref{eq:v-2} we conclude that 
    \[
    \begin{aligned}
    \MoveEqLeft\tau \limsup_{s\downarrow 0} \int_0^1 \left\|\nabla \left(\sqrt[*]{V_\varepsilon} *(\calS^{z}\rho^\varepsilon_{\tau, k})\right)\right\|^2_{L^2(\bbS^{n-1})}\Big|_{z=ts}dt \\
    &\leq \calE(\rho^\varepsilon_{\tau, k-1}) - \calE(\rho^\varepsilon_{\tau, k}) + \tau\|\Delta W\|_{L^\infty}.
    \end{aligned}
    \]
    In particular, this inequality shows that $\calE(\rho^\varepsilon_{\tau, k-1}) < \infty$ implies $\calE(\rho^\varepsilon_{\tau, k}) < \infty$; since $\rho^\varepsilon_{\tau,0} = \rho_0$ satisfies $\calE(\rho_0)<\infty$, it follows that $\calE(\rho^\varepsilon_{\tau, k}) < \infty$ for all $k$.

    Applying Lemma \ref{lemma:heat} and using the dominated convergence theorem we pass to the limit $\calS^{z}\rho \to \rho$ as $z\to 0$, and using the lower-semicontinuity of the $H^1$-seminorm under $L^2$-convergence we conclude that
    \[
    \tau \left\|\nabla \left(\sqrt[*]{V_\varepsilon} *\rho^\varepsilon_{\tau, k}\right)\right\|^2_{L^2(T\bbS^{n-1})} \leq \calE(\rho^\varepsilon_{\tau, k-1}) - \calE(\rho^\varepsilon_{\tau, k}) + \tau\|\Delta W\|_{L^\infty}.
    \]
    Note that $\bar \rho = 1$ is the unique minimizer of the entropy on $\bbS^{n-1}$ and thus the entropy is bounded from below by $\calE_{\min} = \int \bar\rho \log \bar\rho \, d\sigma = 0$. As a result, summing the inequality above over $k$ we conclude that
    \[
    \|\nabla v^\varepsilon_{\tau, k}\|^2_{L^2(0,T;L^2(T\bbS^{n-1}))} = \int_{t=0}^T \left\|\nabla \left(\sqrt[*]{V_\varepsilon} *\rho^\varepsilon_{\tau, k}(t)\right)\right\|^2_{L^2(T\bbS^{n-1})} dt \leq \calE(\rho_0)  + T\|\Delta W\|_{L^\infty}.
    \]
    Since $v^\varepsilon_{\tau_k} \to \tilde v^\varepsilon$ weakly in $L^2(0, T;  L^2(\bbS^{n-1}))$ and the $H^1$-seminorm also is lower-semicontinuous under weak $L^2$-convergence, the norm of the limiting curve is bounded uniformly in $\varepsilon$, namely: 
    \[
    \|\nabla \tilde v^\varepsilon\|^2_{L^2(0, T;  H^1(\bbS^{n-1}))} < \calE(\rho_0)  + T\|\Delta W\|_{L^\infty}.
    \]
    Thus, the family $\{\tilde v^\varepsilon\}_{\e>0}$ is bounded in $L^2(0, T;  H^1(\bbS^{n-1}))$ and, by the Banach-Alaoglu theorem, therefore weakly relatively compact.
\end{proof}
\begin{lemma}[Convergence of $\{v^\varepsilon\}$]
\label{lem:comp-v}
    Let $\{\rho^{\varepsilon_\ell}\}_{\ell\in\bbN}$ be the weakly convergent sequence from Lemma \ref{lem:comp-rho}. Then for any $T>0$, the corresponding sequence of curves $(v^{\varepsilon_\ell})_{k\in\bbN}$ converges strongly in  $L^2(0, T;  L^2(\bbS^{n-1}))$ to the curve $\tilde v$ given by Lemma~\ref{lemma:bound-v}.
\end{lemma}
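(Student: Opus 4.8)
The plan is to derive strong compactness of $(v^{\varepsilon_\ell})$ in $L^2(0,T;L^2(\bbS^{n-1}))$ from the Rossi--Savaré criterion, Proposition~\ref{prop:savare}, and then to upgrade the convergence in measure it produces to strong convergence by exploiting the uniform pointwise-in-time $L^2$-bound~\eqref{eq:conv-bound-existence-theorem}; a Urysohn subsequence argument will then hand us the full sequence. Fix $T>0$. I will work along the subsequence $\varepsilon_\ell$ of Lemma~\ref{lem:comp-rho}, assuming $\varepsilon_\ell<\varepsilon_0$ with $\varepsilon_0$ as in Assumption~\ref{ass:conv-root}, and (after a further extraction which I shall not relabel) that $v^{\varepsilon_\ell}\stackrel{w}{\to}\tilde v$ in $L^2(0,T;H^1(\bbS^{n-1}))$, which is available by Lemma~\ref{lemma:bound-v}.

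I will apply Proposition~\ref{prop:savare} with $X=L^2(\bbS^{n-1})$, the functional $\calF(u):=\|u\|^2_{H^1(\bbS^{n-1})}$ (equal to $+\infty$ for $u\notin H^1$), and the pseudo-distance $g:=W_2$ (equal to $+\infty$ off $\calP(\bbS^{n-1})$). Here $\calF$ is lower semicontinuous on $L^2(\bbS^{n-1})$ and, by the Rellich--Kondrachov theorem on the compact manifold $\bbS^{n-1}$, has $L^2$-compact sublevel sets. The \emph{compactness-in-space} assumption is precisely the bound of Lemma~\ref{lemma:bound-v}, $\int_0^T\calF(v^{\varepsilon_\ell}(t))\,dt=\|v^{\varepsilon_\ell}\|^2_{L^2(0,T;H^1(\bbS^{n-1}))}\le(CT)^2$. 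For the \emph{equicontinuity}, I first note that each $v^{\varepsilon_\ell}(t)=\sqrt[*]{V_{\varepsilon_\ell}}*\rho^{\varepsilon_\ell}(t)$ is a probability measure, because $\sqrt[*]{V_{\varepsilon_\ell}}\ge0$ and $\int\sqrt[*]{V_{\varepsilon_\ell}}(x,\cdot)\,d\sigma=1$ under Assumptions~\ref{ass:repulsive} and~\ref{ass:conv-root}. Passing $\tau\to0$ in the uniform estimate~\eqref{eq:weak-continuity} and using that $W_2$ metrizes narrow convergence on the compact sphere yields $W_2(\rho^{\varepsilon_\ell}(s),\rho^{\varepsilon_\ell}(t))\le c\,|t-s|^{1/2}$, and then Lemma~\ref{lem:was-bound}, applied to the nonnegative mass-preserving zonal kernel $\sqrt[*]{V_{\varepsilon_\ell}}$ with its universal constant $C_0$, gives
\[
W_2\bigl(v^{\varepsilon_\ell}(s),v^{\varepsilon_\ell}(t)\bigr)\le C_0\,W_2\bigl(\rho^{\varepsilon_\ell}(s),\rho^{\varepsilon_\ell}(t)\bigr)\le C_0 c\,|t-s|^{1/2},
\]
so that $\int_0^{T-h}g\bigl(v^{\varepsilon_\ell}(t+h),v^{\varepsilon_\ell}(t)\bigr)\,dt\le C_0 c\,T\,h^{1/2}\to0$ as $h\downarrow0$, uniformly in $\ell$. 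Finally $g=W_2$ is compatible with $\calF$, since $W_2(u,v)=0$ forces $u=v$ and, for $L^2$ densities, $\sigma$-a.e.\ equality. Proposition~\ref{prop:savare} then yields, along a further unrelabeled subsequence, that $v^{\varepsilon_\ell}(t)\to w(t)$ in $L^2(\bbS^{n-1})$ in $dt$-measure on $[0,T]$ for some $w$.

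To conclude, I will upgrade this to strong convergence and identify $w$. By~\eqref{eq:conv-bound-existence-theorem} the family $\{v^{\varepsilon_\ell}(t):t\in[0,T],\,\ell\in\bbN\}$ is bounded in $L^2(\bbS^{n-1})$, hence so is $w(t)$; convergence in $dt$-measure together with this uniform bound forces $\int_0^T\|v^{\varepsilon_\ell}(t)-w(t)\|^2_{L^2(\bbS^{n-1})}\,dt\to0$ (split the time integral over the vanishing-measure set where the integrand exceeds a threshold and over its complement). Thus $v^{\varepsilon_\ell}\to w$ strongly in $L^2(0,T;L^2(\bbS^{n-1}))$ along the subsequence, and comparison with the weak limit gives $w=\tilde v$. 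Since the very same reasoning applies to every subsequence of $(v^{\varepsilon_\ell})$ — which is still bounded in $L^2(0,T;H^1)$, still $W_2$-equicontinuous, and still converges weakly to $\tilde v$ — and always produces the same limit $\tilde v$, the full sequence converges strongly to $\tilde v$ in $L^2(0,T;L^2(\bbS^{n-1}))$; as $T>0$ was arbitrary and the limits on nested time intervals are consistent, the lemma follows.

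I expect the verification of the equicontinuity hypothesis to be the only delicate point: it rests on transporting the $W_2$-H\"older-$1/2$ regularity from $\rho^{\varepsilon}$ to $v^{\varepsilon}$ via Lemma~\ref{lem:was-bound}, which is legitimate precisely because Assumption~\ref{ass:conv-root} makes $v^{\varepsilon}(t)$ a genuine probability measure. The remaining ingredients — the $H^1$-bound of Lemma~\ref{lemma:bound-v}, the uniform $L^2$-bound~\eqref{eq:conv-bound-existence-theorem}, Rellich--Kondrachov and Proposition~\ref{prop:savare} — should then combine mechanically.
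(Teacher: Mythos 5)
Your proof is correct and follows essentially the same route as the paper: the Rossi--Savar\'e criterion (Proposition~\ref{prop:savare}) with $\calF=\|\cdot\|_{H^1(\bbS^{n-1})}^2$ and a Wasserstein pseudo-distance, using Lemma~\ref{lem:was-bound} (with the nonnegative, mass-preserving $\sqrt[*]{V_\varepsilon}$ from Assumption~\ref{ass:conv-root}) to transfer the $W_2$-H\"older-in-time estimate from $\rho^\varepsilon$ to $v^\varepsilon$; the paper takes $g=W_1$ where you take $g=W_2$, an immaterial difference. Your explicit upgrade from convergence in measure to strong $L^2(0,T;L^2(\bbS^{n-1}))$ convergence via the uniform bound~\eqref{eq:conv-bound-existence-theorem}, and the identification of the limit with $\tilde v$, simply spell out the steps the paper delegates to~\cite{burger2023porous}.
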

\begin{proof}
   The proof follows the steps of \cite[Proposition 4.3]{burger2023porous}. In particular, applying Proposition \ref{prop:savare} to the family $v^\varepsilon$ with the functional 
   \[
   \calF(v) :=\begin{cases} \|v\|^2_{H^1(\bbS^{n-1})}, \quad &v \in \calP_{ac}(\bbS^{n-1})\cap H^1(\bbS^{n-1}), \\
   +\infty, &\text{otherwise}.     
   \end{cases} 
   \] 
   and the distance $g(u, v) = W_1(u, v)$, in view of Lemma \ref{lem:was-bound} we get the result. We remark that due to the compactness of the sphere, the steps 1 and 2 are significantly simpler than in the proof of \cite[Proposition 4.3]{burger2023porous}. In fact, compactness of the sublevel sets of $\calF$ follows directly from the Rellich theorem, see for example \cite[Proposition 4.4]{taylor1996partial}, and tightness of the family $v_\varepsilon$ is a direct consequence of the uniform bound obtained in Lemma \ref{lemma:bound-v}.
\end{proof}

\subsection{Proof of Theorem \ref{th:main-diff}}
\label{sec:proof-main}

We now give the proof of Theorem~\ref{th:main-diff}, and in this section we will therefore adopt the assumptions of Theorem~\ref{th:main-diff} on $V_\e$ and $W$.

Using the definition of weak solutions of~\eqref{eq:intro-main} and the definition of the smoothed curve~$v^\varepsilon$, we conclude that the pair $(\rho^\varepsilon, v^\varepsilon)$ satisfies the following relation for every $\varepsilon \in \bbR_+$,  $\varphi \in C^2(\bbS^{n-1})$ and $t\geq0$:
\[
\begin{aligned}
    \MoveEqLeft\int_{\bbS^{n-1}} \varphi(x) d \rho_t^{\varepsilon}(x)-\int_{\bbS^{n-1}} \varphi(x) d \rho_0(x)\\
&=- \int_0^t \int_{\bbS^{n-1} \times \bbS^{n-1}}g_x\Big(\nabla_x \varphi(x),  \nabla_x W(x, y)\Big) d \rho_s^{\varepsilon}(y) d \rho_s^{\varepsilon}(x) d s\\
&\qquad-\int_0^t \int_{\bbS^{n-1} \times \bbS^{n-1}}g_x\Big(\nabla_x \varphi(x),  \nabla_x V_\varepsilon(x, y)\Big) d \rho_s^{\varepsilon}(y) d \rho_s^{\varepsilon}(x) d s \\
&\leftstackrel{L.\,\ref{lemma:z}}= -\int_0^t \int_{\bbS^{n-1} \times \bbS^{n-1}}g_x\Big(\nabla_x \varphi(x),  \nabla_x W(x, y)\Big) d \rho_s^{\varepsilon}(y) d \rho_s^{\varepsilon}(x) d s\\
&\qquad-\int_0^t \int_{\bbS^{n-1} }g_x\Big(\nabla_x \varphi(x),  \nabla_x v^\varepsilon_s(x)\Big)  v_s^{\varepsilon}(x) d\sigma(x)d s  \\
&\qquad+ \int_0^t\int_{\bbS^{n-1}} g_x(r_s^\varepsilon(x), \nabla v^\varepsilon_s(x))d\sigma(x)ds,
\end{aligned}
\]
where 
\begin{equation}
    \label{eqdef:residual}
r_s^\varepsilon(x) := \int \sqrt[*]{V_\varepsilon} (z, x)(\rho_s^{\varepsilon}(z)\Pi_{xz}\nabla_z \varphi(z))d\sigma(z) - (\sqrt[*]{V_\varepsilon} *\rho_s^{\varepsilon}) \nabla_x \varphi(x)
\end{equation}
is a residual term that follows from Lemma \ref{lemma:z} below. Comparing this expression with~\eqref{eq:solADE}, we observe that the proof of Theorem \ref{th:main-diff} thus relies on two facts: convergence of the residual $r^\varepsilon$ to zero, which we prove in Lemma~\ref{lem:residual},  and the equality of the limits $\rho^\varepsilon$ and $v^\varepsilon$, which we prove in Lemma \ref{lem:limits}. 

\medskip
We first show the missing step in the calculation above.
\begin{lemma}
\label{lemma:z}
The following equality holds for arbitrary $\varepsilon \in \bbR_+$:
\[
\begin{aligned}
\MoveEqLeft \int_0^t \int_{\bbS^{n-1} \times \bbS^{n-1}}g_x\Big(\nabla_x \varphi(x),  \nabla_x V_\varepsilon(x, y)\Big) d \rho_s^{\varepsilon}(y) d \rho_s^{\varepsilon}(x) d s \\
&=\int_0^t \int_{\bbS^{n-1} }g_x\Big(\nabla_x \varphi(x),  \nabla_x v^\varepsilon_s(x)\Big)  d v_s^{\varepsilon}(x) d s  \\
&\qquad - \int_0^t\int_{\bbS^{n-1}} g_x(r_s^\varepsilon(x), \nabla v^\varepsilon_s(x))d\sigma(x)ds,
\end{aligned}
\]
where $r_t^\e$ is given in~\eqref{eqdef:residual}.
\end{lemma}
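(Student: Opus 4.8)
The plan is to move the gradient off the kernel $V_\varepsilon$ and onto the smoothed density $v_s^\varepsilon$ by factoring $V_\varepsilon$ through its convolution square root and then invoking the gradient formula of Proposition~\ref{prop:grad}; the residual $r_s^\varepsilon$ is precisely the parallel-transport correction that this formula produces. Since measurability and integrability in the time variable are inherited from the weak formulation of~\eqref{eq:intro-main}, it suffices to prove the identity pointwise in $s$ and integrate over $[0,t]$ at the end.

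First I would check that $v_s^\varepsilon:=\sqrt[*]{V_\varepsilon}*\rho_s^\varepsilon$ lies in $H^1(\bbS^{n-1})$, so that Proposition~\ref{prop:grad} can be applied to it. By Corollary~\ref{prop:conv-root}, $v_s^\varepsilon=\sum_{l,k}\sqrt{\hat V_{\varepsilon,l}}\,\alpha_{l,k}Y_{l,k}$ in $L^2$ with $\alpha_{l,k}=\int Y_{l,k}\,d\rho_s^\varepsilon$; Jensen's inequality together with Proposition~\ref{prop:zonal} gives $\sum_{k\in\bbK_l}\alpha_{l,k}^2\le Z_l(x_0,x_0)\lesssim l^{\,n-2}$, and hence, using~\eqref{eq:orthogonality},
\[
\|v_s^\varepsilon\|_{H^1(\bbS^{n-1})}^2=\sum_{l,k}\hat V_{\varepsilon,l}\,\alpha_{l,k}^2\,(1-\lambda_l)\ \lesssim\ \sum_l l^{\,n}\hat V_{\varepsilon,l}\ <\ \infty
\]
by Assumption~\ref{ass:repulsive}; in particular the series converges in $H^1$ and $v_s^\varepsilon\in H^1(\bbS^{n-1})$, uniformly in $s$.

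Next, since $V_\varepsilon\in C^2$ and $\rho_s^\varepsilon$ is a probability measure, differentiating under the $\rho_s^\varepsilon(dy)$-integral (dominated convergence) turns the left-hand side of the asserted identity into $\int g_x(\nabla_x\varphi(x),\nabla_x(V_\varepsilon*\rho_s^\varepsilon)(x))\,d\rho_s^\varepsilon(x)$. The spectral definition of $\sqrt[*]{V_\varepsilon}$ (Remark~\ref{remark:kernel-by-decomp}) together with the reproducing property $\int Z_l(x,w)Z_m(w,y)\,d\sigma(w)=\delta_{lm}Z_l(x,y)$ of the zonal harmonics yields $V_\varepsilon=\sqrt[*]{V_\varepsilon}*\sqrt[*]{V_\varepsilon}$ as kernels, hence $V_\varepsilon*\rho_s^\varepsilon=\sqrt[*]{V_\varepsilon}*v_s^\varepsilon$; applying Proposition~\ref{prop:grad} with the zonal kernel $\sqrt[*]{V_\varepsilon}\in H^1(\bbS^{n-1}\times\bbS^{n-1})$ (Lemma~\ref{lem:conv-sq-root}) and the function $v_s^\varepsilon$ gives
\[
\nabla_x(V_\varepsilon*\rho_s^\varepsilon)(x)=\nabla_x\bigl(\sqrt[*]{V_\varepsilon}*v_s^\varepsilon\bigr)(x)=\int_{\bbS^{n-1}}\sqrt[*]{V_\varepsilon}(x,y)\,\Pi_{xy}\nabla_y v_s^\varepsilon(y)\,d\sigma(y).
\]
Substituting this in, using that $\Pi_{xy}$ is a linear isometry so that $g_x(\nabla_x\varphi(x),\Pi_{xy}w)=g_y(\Pi_{yx}\nabla_x\varphi(x),w)$, and interchanging the $\sigma(dy)$- and $\rho_s^\varepsilon(dx)$-integrals — legitimate by Tonelli, the integrand being dominated by $|\sqrt[*]{V_\varepsilon}(x,y)|\,\|\nabla\varphi\|_\infty\,|\nabla_y v_s^\varepsilon(y)|_{g_y}$, whose integral against $\sigma(dy)\otimes\rho_s^\varepsilon(dx)$ is at most $\|\nabla\varphi\|_\infty\,\|\sqrt[*]{V_\varepsilon}(x_0,\cdot)\|_{L^2}\,\|\nabla v_s^\varepsilon\|_{L^2}<\infty$ by Cauchy–Schwarz and zonality — the left-hand side becomes
\[
\int_{\bbS^{n-1}} g_x\!\left(\int_{\bbS^{n-1}}\sqrt[*]{V_\varepsilon}(z,x)\,\Pi_{xz}\nabla_z\varphi(z)\,d\rho_s^\varepsilon(z),\ \nabla_x v_s^\varepsilon(x)\right)d\sigma(x).
\]
By~\eqref{eqdef:residual} the inner integral equals $r_s^\varepsilon(x)+v_s^\varepsilon(x)\nabla_x\varphi(x)$ (reading the factor $\rho_s^\varepsilon(z)\,d\sigma(z)$ there as $d\rho_s^\varepsilon(z)$, since $\rho_s^\varepsilon$ need not be absolutely continuous); splitting the scalar product by bilinearity and using $v_s^\varepsilon(x)\,d\sigma(x)=dv_s^\varepsilon(x)$ produces the stated identity, after which one integrates over $s\in[0,t]$.

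The main obstacle is the first step. In contrast with the flat-space argument of~\cite{burger2023porous}, the solution $\rho_s^\varepsilon$ itself carries no $L^2$ — let alone $H^1$ — regularity, so the gradient formula of Proposition~\ref{prop:grad} can only be brought in after one smoothing by $\sqrt[*]{V_\varepsilon}$, and its hypothesis $v_s^\varepsilon\in H^1(\bbS^{n-1})$ has to be extracted from the quantitative decay $\sum_l l^{\,n}\hat V_{\varepsilon,l}<\infty$ of Assumption~\ref{ass:repulsive} together with the bound $\sum_{k\in\bbK_l}\alpha_{l,k}^2\lesssim l^{\,n-2}$, valid for any probability measure. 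The remaining ingredients — differentiating under an integral against a measure, the Tonelli/Fubini bookkeeping, and tracking the direction of parallel transport — are routine.
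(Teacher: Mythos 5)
Your proof follows essentially the same route as the paper's: factor $V_\varepsilon$ through its convolution square root, apply Proposition~\ref{prop:grad} to move the gradient onto $v_s^\varepsilon$, use the isometry of parallel transport to shift $\Pi_{xz}$ onto $\nabla\varphi$, swap the integrals, and identify the residual~\eqref{eqdef:residual}; your extra verification that $v_s^\varepsilon\in H^1(\bbS^{n-1})$ for an arbitrary probability measure (via $\sum_{k}\alpha_{l,k}^2\lesssim l^{n-2}$ and $\sum_l l^n\hat V_{\varepsilon,l}<\infty$) is a useful supplement, since Lemma~\ref{lemma:gradient} only covers $L^2$ densities, and your handling of the measure-versus-density and Fubini points is sound. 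One small remark: exactly as in the paper's own computation, the derivation produces the residual term with a $+$ sign rather than the $-$ sign appearing in the statement (equivalently, $r_s^\varepsilon$ carries the opposite sign), which is immaterial for the sequel since $r^\varepsilon\to 0$ strongly, but you should not claim the stated identity verbatim without flagging this.
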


\begin{proof}
Using Lemma \ref{prop:grad} we obtain
\[
    \begin{aligned}
       \MoveEqLeft \iint g_x\Big(\nabla_x \varphi(x),  \nabla_x V_\varepsilon(x, y)\Big) \rho_s^{\varepsilon}(y) \rho_s^{\varepsilon}(x)d\sigma(x) d\sigma(y)\\
       &=\iiint g_x\Big(\nabla_x \varphi(x),  \nabla_x \sqrt[*]{V_\varepsilon} (x, z) \cdot \sqrt[*]{V_\varepsilon}(z, y)  \rho_s^{\varepsilon}(y)\Big) \rho_s^{\varepsilon}(x)(d\sigma)^{3} \\
       &=\iint g_x\Big(\rho_s^{\varepsilon}(x)\nabla_x \varphi(x),  \nabla_x \sqrt[*]{V_\varepsilon} (x, z) \cdot v_s^\varepsilon(z)\Big) d\sigma(x)d\sigma(z) \\
       &=\int g_x\Big( \rho_s^{\varepsilon}(x)\nabla_x \varphi(x),  \sqrt[*]{V_\varepsilon} (x, z) * \Pi_{xz}\nabla_z v_s^\varepsilon(z)\Big)d\sigma(x)\\
       &=\iint g_x\Big( \sqrt[*]{V_\varepsilon} (x, z) \cdot (\rho_s^{\varepsilon}(x)\nabla_x \varphi(x)),  \Pi_{xz}\nabla_z v_s^{\varepsilon}(z)\Big)d\sigma(x)d\sigma(z) \\
       &=\iint g_z\Big( \sqrt[*]{V_\varepsilon} (x, z) \cdot (\rho_s^{\varepsilon}(x)\Pi_{zx}\nabla_x \varphi(x)),  \nabla_z v_s^{\varepsilon}(z)\Big) d\sigma(x)d\sigma(z).
    \end{aligned}
    \]
    Integrating over $s$ yields the result.
\end{proof}

\begin{lemma}
\label{lem:residual-pw}
For any $\varphi \in C^2(\bbS^{n-1})$, the residual term $r^\varepsilon$ satisfies for all $s\geq0$:
\[
\int \|r_s^\varepsilon(x)\|_{g_x}d\sigma(x) \to 0.
\]
\end{lemma}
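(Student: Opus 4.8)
The plan is to exploit the zonality and non-negativity of $\sqrt[*]{V_\e}$ to collapse the residual into a single integral measuring the variation of the vector field $\nabla\varphi$ along short geodesics, and then to let $\sqrt[*]{V_\e}$ concentrate on the diagonal. First I would rewrite $r_s^\e$: since $\sqrt[*]{V_\e}$ is zonal, $(\sqrt[*]{V_\e}*\rho_s^\e)(x)=\int_{\bbS^{n-1}}\sqrt[*]{V_\e}(z,x)\,d\rho_s^\e(z)$, where I regard $\rho_s^\e$ simply as a probability measure (no $L^2$-bound is needed here). Hence
\[
r_s^\e(x)=\int_{\bbS^{n-1}}\sqrt[*]{V_\e}(z,x)\bigl(\Pi_{xz}\nabla_z\varphi(z)-\nabla_x\varphi(x)\bigr)\,d\rho_s^\e(z),
\]
and, since $\sqrt[*]{V_\e}\ge0$ for $\e<\e_0$ by Assumption~\ref{ass:conv-root},
\[
\|r_s^\e(x)\|_{g_x}\le\int_{\bbS^{n-1}}\sqrt[*]{V_\e}(z,x)\,\bigl\|\Pi_{xz}\nabla_z\varphi(z)-\nabla_x\varphi(x)\bigr\|_{g_x}\,d\rho_s^\e(z).
\]

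Next I would use the $C^2$-regularity of $\varphi$: differentiating $\nabla\varphi$ along the minimizing geodesic from $x$ to $z$ and integrating yields the parallel-transport Lipschitz bound $\|\Pi_{xz}\nabla_z\varphi(z)-\nabla_x\varphi(x)\|_{g_x}\le\|\mathrm{Hess}\,\varphi\|_{L^\infty}\,\dist(x,z)$. Integrating the previous display over $x\in\bbS^{n-1}$ and using Tonelli (the integrand is non-negative), I get
\[
\int_{\bbS^{n-1}}\|r_s^\e(x)\|_{g_x}\,d\sigma(x)\le\|\mathrm{Hess}\,\varphi\|_{L^\infty}\int_{\bbS^{n-1}}\Bigl(\int_{\bbS^{n-1}}\sqrt[*]{V_\e}(z,x)\,\dist(x,z)\,d\sigma(x)\Bigr)d\rho_s^\e(z).
\]
Because $\sqrt[*]{V_\e}$ is zonal and both $\dist$ and $\sigma$ are rotation-invariant, the inner integral does not depend on $z$; denote its common value by $g_\e$. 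By the argument in the proof of Lemma~\ref{lemma:uniform} we have $\sqrt[*]{V_\e}(z_0,\cdot)\,\sigma(d\cdot)\stackrel{w}{\to}\delta_{z_0}$, so testing against the continuous bounded function $x\mapsto\dist(x,z_0)$ gives $g_\e\to\dist(z_0,z_0)=0$ as $\e\to0$. Since $\rho_s^\e$ is a probability measure, $\int_{\bbS^{n-1}}\|r_s^\e(x)\|_{g_x}\,d\sigma(x)\le\|\mathrm{Hess}\,\varphi\|_{L^\infty}\,g_\e\to0$, and this bound is uniform in $s\ge0$.

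The only delicate point is the uniformity in $z$ of the inner integral $\int\sqrt[*]{V_\e}(z,x)\dist(x,z)\,d\sigma(x)$, which is precisely what allows the estimate to survive integration against the arbitrary probability measure $\rho_s^\e$; this is handled cleanly by the observation that zonality together with rotation-invariance makes that integral a constant in $z$, after which only the scalar weak-convergence statement already extracted in the proof of Lemma~\ref{lemma:uniform} is needed. Everything else is routine: the rewriting of $r_s^\e$, the non-negativity-based pointwise bound, and the $C^2$ geodesic estimate on $\nabla\varphi$.
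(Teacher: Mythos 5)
Your proposal is correct and follows essentially the same route as the paper's proof: bound the integrand using the non-negativity of $\sqrt[*]{V_\e}$ and the Lipschitz estimate $\|\Pi_{xz}\nabla_z\varphi(z)-\nabla_x\varphi(x)\|_{g_x}\lesssim\dist(x,z)$ coming from $\varphi\in C^2$, swap the order of integration, use rotational invariance to make the inner integral $\int\sqrt[*]{V_\e}(z,x)\dist(z,x)\,d\sigma(x)$ independent of $z$, and send it to zero via the concentration of $\sqrt[*]{V_\e}$ established in (the proof of) Lemma~\ref{lemma:uniform}. The only cosmetic difference is that you invoke the weak convergence $\sqrt[*]{V_\e}(z_0,\cdot)\sigma\rightharpoonup\delta_{z_0}$ directly, while the paper cites the statement of Lemma~\ref{lemma:uniform}; these are interchangeable here.
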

\begin{proof}
Since $\varphi \in C^2$, the gradient $\nabla \varphi$ is Lipchitz continuous, meaning that there exists $L>0$ such that  $\|\Pi_{xz}\nabla_z \varphi(z) - \nabla_x \varphi(x)\|_{g_x} \leq L\dist(x, z)$. By Assumption \ref{ass:conv-root}, the square root $\sqrt[*]{V_\varepsilon} $ is non-negative and hence we obtain
  \[
\begin{aligned}
   \MoveEqLeft\int \|r_t^\varepsilon(x)\|_{g_x}d\sigma(x) = \int \left\| \int \sqrt[*]{V_\varepsilon} (z, x)(\rho_s^{\varepsilon}(z)\Pi_{xz}\nabla_z \varphi(z)) - (\sqrt[*]{V_\varepsilon} *\rho_s^{\varepsilon}) \nabla_x \varphi(x)\right\|_{g_x} d\sigma(x)\\
   &\leq \iint \sqrt[*]{V_\varepsilon} (z, x)\rho_s^{\varepsilon}(z)\left\|\Pi_{xz}\nabla_z \varphi(z) - \nabla_x \varphi(x)\right\|_{g_x} d\sigma(z)d\sigma(x) \\
   &\leq L\int \rho_s^{\varepsilon}(z)d\sigma (z) \int \sqrt[*]{V_\varepsilon}(z, x) \dist(z, x) d\sigma(x). 
\end{aligned}
\] 
We now fix any $z_0\in \bbS^{n-1}$ and calculate 
\begin{align*}
   \MoveEqLeft \lim_{\varepsilon \to 0}\int \|r_t^\varepsilon(x)\|_{g_x}d\sigma(x) \leq \lim_{\varepsilon \to 0}L\int \rho_s^{\varepsilon}(z)d\sigma (z) \int \sqrt[*]{V_\varepsilon}(z, x) \dist(z, x) d\sigma(x) \\
   &\leftstackrel{(*)}= L \lim_{\varepsilon \to 0} \int \sqrt[*]{V_\varepsilon}(z_0, x) \dist(z_0, x) d\sigma(x) \\
   &= 0,
\end{align*}
where the identity~$(*)$ follows from the rotational symmetry of $\sigma$, $\dist$, and $\sqrt[*]{V_\e}$, and where the final step follows from Lemma~\ref{lemma:uniform}.
\end{proof}

\begin{lemma}[$r^\varepsilon$ converges strongly to zero] 
\label{lem:residual}
The residual $r^\e$ in~\eqref{eqdef:residual} satisfies 
\[
\|r^\varepsilon\|_{L^2(0,T; L^2(T\bbS^{n-1}))} \to 0 \quad \text{as } \varepsilon \to 0.
\]
\end{lemma}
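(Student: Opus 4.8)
The plan is to upgrade the pointwise‑in‑time, $L^1$‑in‑space estimate of Lemma~\ref{lem:residual-pw} to the $L^2(0,T;L^2)$ convergence claimed here, by interpolating against a uniform higher‑integrability bound supplied by the $H^1$‑estimate on $v^\e$ from Lemma~\ref{lemma:bound-v}.

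\emph{Step 1 (uniform‑in‑time $L^1$ decay).} First I would record that the estimate inside the proof of Lemma~\ref{lem:residual-pw} is in fact uniform in $s$. Since $\varphi\in C^2$, the vector field $\nabla\varphi$ is Lipschitz (with constant $L$, say) and bounded (by $M:=\sup_x\|\nabla_x\varphi(x)\|_{g_x}$); using $\sqrt[*]{V_\e}\ge 0$ (Assumption~\ref{ass:conv-root}) and the zonal symmetry $\sqrt[*]{V_\e}(z,x)=\sqrt[*]{V_\e}(x,z)$,
\[
\|r_s^\e(x)\|_{g_x}\le\int_{\bbS^{n-1}}\sqrt[*]{V_\e}(z,x)\,\|\Pi_{xz}\nabla_z\varphi(z)-\nabla_x\varphi(x)\|_{g_x}\,d\rho_s^\e(z)\le L\int_{\bbS^{n-1}}\sqrt[*]{V_\e}(z,x)\dist(x,z)\,d\rho_s^\e(z),
\]
so that, by Fubini and the rotational symmetry of $\sigma$, $\dist$ and $\sqrt[*]{V_\e}$,
\[
\int_{\bbS^{n-1}}\|r_s^\e(x)\|_{g_x}\,d\sigma(x)\le L\int_{\bbS^{n-1}}\sqrt[*]{V_\e}(x_0,x)\dist(x_0,x)\,d\sigma(x)=:Lb_\e\qquad\text{for all }s\ge0,
\]
for any fixed $x_0\in\bbS^{n-1}$, and $b_\e\to0$ as $\e\to0$ by Lemma~\ref{lemma:uniform} applied to the continuous function $x\mapsto\dist(x_0,x)$.

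\emph{Step 2 (uniform spatial $L^p$ bound and interpolation).} Since parallel transport is an isometry, the same computation also gives the pointwise bound $\|r_s^\e(x)\|_{g_x}\le 2M\,v_s^\e(x)$, with $v^\e=\sqrt[*]{V_\e}*\rho^\e$. I would fix $p\in(2,\infty)$ for which the Sobolev embedding $H^1(\bbS^{n-1})\hookrightarrow L^p(\bbS^{n-1})$ holds — such a $p$ exists for every $n\ge2$ — with constant $C_S$; then $\|r_s^\e\|_{L^p(T\bbS^{n-1})}\le 2MC_S\,\|v_s^\e\|_{H^1(\bbS^{n-1})}$ for a.e.\ $s$. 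Setting $\theta=\tfrac{p-2}{2(p-1)}\in(0,\tfrac12]$, the Lyapunov (Hölder) interpolation inequality applied to the scalar function $x\mapsto\|r_s^\e(x)\|_{g_x}$ gives $\|r_s^\e\|_{L^2}^2\le\|r_s^\e\|_{L^1}^{2\theta}\|r_s^\e\|_{L^p}^{2(1-\theta)}$, whence, integrating in $s$ and inserting Step~1,
\[
\|r^\e\|_{L^2(0,T;L^2(T\bbS^{n-1}))}^2\le (Lb_\e)^{2\theta}(2MC_S)^{2(1-\theta)}\int_0^T\|v_s^\e\|_{H^1(\bbS^{n-1})}^{2(1-\theta)}\,ds.
\]
Because $2(1-\theta)<2$, Hölder's inequality in $s$ over the finite interval $(0,T)$ bounds the last integral by $T^\theta\,\|v^\e\|_{L^2(0,T;H^1(\bbS^{n-1}))}^{2(1-\theta)}$, which by Lemma~\ref{lemma:bound-v} is at most $T^\theta(CT)^{2(1-\theta)}$, uniformly in $\e$. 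Therefore $\|r^\e\|_{L^2(0,T;L^2(T\bbS^{n-1}))}^2\le C_T\,b_\e^{2\theta}\to0$ as $\e\to0$, which is the assertion.

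\emph{Expected main obstacle.} The only genuine difficulty is the passage from $L^1$‑ to $L^2$‑integrability in space: the crude pointwise bound $\|r_s^\e(x)\|_{g_x}\le 2M v_s^\e(x)$ yields only a \emph{uniform} $L^2(0,T;L^2)$ bound, and a bare $L^1\to0$ convergence does not self‑improve to $L^2\to0$ (concentration could occur). The extra room is created by the uniform $L^2(0,T;H^1)$ bound of Lemma~\ref{lemma:bound-v} combined with $H^1\hookrightarrow L^p$ for some $p>2$; I would be careful to verify that such an embedding is available in every dimension $n\ge2$ (one may take $p<2(n-1)/(n-3)$ for $n\ge4$ and any finite $p$ for $n=2,3$) and that the interpolation exponent $\theta$ is strictly positive, so that $b_\e^{2\theta}\to0$.
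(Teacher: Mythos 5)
Your argument is correct and is exactly the route the paper takes: its proof of Lemma~\ref{lem:residual} simply cites \cite[Lemma 5.2, Corollary 5.1]{burger2023porous} and says to combine Lemma~\ref{lem:residual-pw}, Lemma~\ref{lemma:bound-v} and the Sobolev embedding theorem, which is precisely your interpolation of the uniform-in-time $L^1$ decay against the $L^p$ bound coming from the uniform $L^2(0,T;H^1)$ estimate. Your write-up just supplies the details (uniformity in $s$ of the $L^1$ bound, the pointwise bound $\|r_s^\e(x)\|_{g_x}\le 2M v_s^\e(x)$, and the exponent bookkeeping) that the paper leaves to the cited reference.
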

\begin{proof}
    As in \cite[Lemma 5.2, Corollary 5.1]{burger2023porous}, we combine Lemmas \ref{lemma:bound-v} and \ref{lem:residual-pw} with the Sobolev embedding theorem to get the result.
\end{proof}

\begin{lemma}[$\lim \rho^\varepsilon = \lim v^\varepsilon$]
\label{lem:limits} 
Let $\rho^{\varepsilon_k}$ be the weakly convergent sequence of curves and $v^{\varepsilon_k}$ be the sequence of corresponding smoothed curves. Let $\tilde \rho$ be the narrow limit of~$\rho^{\varepsilon_k}$ and $\tilde v$ be the weak $L^2_{\mathrm{loc}}(0, T;H^1(\bbS^{n-1}))$ limit of $v^{\varepsilon_k}$, then $\tilde \rho=\tilde v$.
\end{lemma}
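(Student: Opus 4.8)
The plan is to test both curves against a fixed smooth function $\varphi\in C^\infty(\bbS^{n-1})$ and a time weight $\psi\in C_c^\infty(0,T)$, and to exploit that the mollifier $\sqrt[*]{V_\e}{*}$ is self-adjoint, its kernel being zonal hence symmetric. This lets me move the convolution off the $\e$-dependent measure $\rho^\e$ and onto the fixed function $\varphi$, where the uniform estimate of Lemma~\ref{lemma:uniform} applies. Concretely, since $\sqrt[*]{V_\e}$ is a non-negative zonal kernel with unit mass (as established below Assumption~\ref{ass:conv-root}), Fubini's theorem and the symmetry $\sqrt[*]{V_\e}(x,y)=\sqrt[*]{V_\e}(y,x)$ give, for every $t$,
\[
\int_{\bbS^{n-1}} \varphi\, v^\e_t\, d\sigma = \iint \varphi(x)\,\sqrt[*]{V_\e}(x,y)\, d\rho^\e_t(y)\, d\sigma(x) = \int_{\bbS^{n-1}} \bigl(\sqrt[*]{V_\e}{*}\varphi\bigr)\, d\rho^\e_t,
\]
so that, using $\rho^\e_t\in\calP(\bbS^{n-1})$,
\[
\Bigl|\int \varphi\, d\rho^\e_t - \int \varphi\, v^\e_t\, d\sigma\Bigr| \leq \bigl\|\varphi-\sqrt[*]{V_\e}{*}\varphi\bigr\|_{L^\infty(\bbS^{n-1})} \to 0 \quad\text{as } \e\to0,
\]
by Lemma~\ref{lemma:uniform}, and this convergence is uniform in $t$.

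Next I would combine this with the two compactness statements. Along the subsequence $\e_k$ of Lemma~\ref{lem:comp-rho}, narrow convergence $\rho^{\e_k}_t\rightharpoonup\tilde\rho_t$ holds for every $t$ and the functions $t\mapsto\int\varphi\,d\rho^{\e_k}_t$ are bounded by $\|\varphi\|_{L^\infty}$, so dominated convergence gives $\int_0^T\psi\int\varphi\,d\rho^{\e_k}_t\,dt\to\int_0^T\psi\int\varphi\,d\tilde\rho_t\,dt$. Simultaneously $v^{\e_k}\to\tilde v$ strongly in $L^2(0,T;L^2(\bbS^{n-1}))$ by Lemma~\ref{lem:comp-v}, and $(t,x)\mapsto\psi(t)\varphi(x)$ belongs to that space, so $\int_0^T\psi\int\varphi\,v^{\e_k}_t\,d\sigma\,dt\to\int_0^T\psi\int\varphi\,\tilde v_t\,d\sigma\,dt$. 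Since the difference of the two $\e_k$-integrands has absolute value at most $\|\psi\|_{L^1}\,\|\varphi-\sqrt[*]{V_{\e_k}}{*}\varphi\|_{L^\infty}\to0$, the two limits coincide:
\[
\int_0^T \psi(t)\int_{\bbS^{n-1}}\varphi\, d\tilde\rho_t\, dt = \int_0^T \psi(t)\int_{\bbS^{n-1}}\varphi\, \tilde v_t\, d\sigma\, dt .
\]

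Finally, since this identity holds for every $\psi\in C_c^\infty(0,T)$ and every $\varphi\in C^\infty(\bbS^{n-1})$, and $C^\infty(\bbS^{n-1})$ is dense in $C(\bbS^{n-1})$, I conclude that for a.e.\ $t$ the measure $\tilde\rho_t$ is absolutely continuous with respect to $\sigma$ with density $\tilde v_t$; a posteriori $\tilde v_t\geq0$ and $\int\tilde v_t\,d\sigma=1$ for a.e.\ $t$, and since $T>0$ is arbitrary the curves are identified on all of $(0,\infty)$. I do not expect a genuine obstacle: the only real subtlety is that $\rho^\e$ varies with $\e$, so Lemma~\ref{lemma:delta} cannot be applied to it directly — the point of the argument is precisely to transfer the mollification onto the fixed test function and invoke the uniform convergence of Lemma~\ref{lemma:uniform} instead. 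The one bookkeeping point is the Fubini step, which is immediate from $\|\varphi\|_{L^\infty}\iint\sqrt[*]{V_\e}(x,y)\,d\sigma(x)\,d\rho^\e_t(y)=\|\varphi\|_{L^\infty}<\infty$.
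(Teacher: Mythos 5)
Your proposal is correct and follows essentially the same route as the paper: both proofs use the zonal symmetry of $\sqrt[*]{V_{\varepsilon}}$ to transfer the mollification onto the fixed test function, invoke the uniform convergence of Lemma~\ref{lemma:uniform}, and pass to the limit by dominated convergence in time together with the compactness statements of Lemmas~\ref{lem:comp-rho} and~\ref{lem:comp-v}. The only differences (your separate time weight $\psi$ and the explicit use of the strong $L^2(0,T;L^2)$ convergence of $v^{\varepsilon_k}$) are cosmetic and, if anything, make the identification of $\int\varphi\,\tilde v_t\,d\sigma$ slightly more explicit than the paper's version.
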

\begin{proof}
    Fix $\varphi\in C_c([0,\infty){\times}\bbS^{n-1}$.  By definition of $v^\varepsilon$ we have for fixed $t\geq0$ 
    \[
    \begin{aligned}
    \int_{\bbS^{n-1}}  \varphi(t,x) dv_t^{\varepsilon_k}(x) &
    = \iint \varphi(t,x) \left(\sqrt[*]{V_{\varepsilon_k}}(x, y)\rho_t^{\varepsilon_k}(y)\right)d\sigma(x)d\sigma(y)\\
    &= \iint \left(\varphi(t,x) \sqrt[*]{V_{\varepsilon_k}}(x, y)\right)d\rho_t^{\varepsilon_k}(y)d\sigma(x) \\
    &= \int (\sqrt[*]{V_{\varepsilon_k}}{*}\varphi(t,\cdot))(y) d\rho_t^{\varepsilon_k}(y).
    \end{aligned}
    \]
    Since $\varphi$ is bounded, the same holds for the convolution. In addition, by Lemma~\ref{lemma:uniform} we have $\bra[\big]{\sqrt[*]{V_{\varepsilon_k}}{*}\varphi(t,\cdot)} \to \varphi$ uniformly on $\bbS^{n-1}$ for every $t\geq0$, thus
    \[
    \int \bra[\Big]{\varphi - \sqrt[*]{V_{\varepsilon_k}}*\varphi}d\rho_t^{\varepsilon_k} \to 0.
    \]
    Note that for any fixed $\varphi \in C_b$ the integral above is bounded uniformly in $t$, namely 
\[
\left|\int (\varphi - \sqrt[*]{V_{\varepsilon_k}}*\varphi)d\rho_t^{\varepsilon_k}\right| \leq  \left\|\varphi - \sqrt[*]{V_{\varepsilon_k}}*\varphi\right\|_{L^\infty} %
\]
and thus, applying the dominated convergence theorem, we conclude that
\[
\int_0^T dt \int_{\bbS^{n-1}} \varphi(x) d(\tilde\rho_t - \tilde v_t)(x) =\int_0^T dt \int_{\bbS^{n-1}} \lim_{k\to \infty}(\varphi - \sqrt[*]{V_{\varepsilon_k}}*\varphi)d \rho^{\varepsilon_k}_t(x) \to 0,
\]
which completes the proof.
\end{proof}

We are now ready to prove the main theorem.
\begin{proof}[Proof of Theorem \ref{th:main-diff}]
    Again we fix a time $T>0$.
Let $\tilde \rho = \lim \rho^{\varepsilon_k}$ and $\tilde v = \lim v^{\varepsilon_k}$ as above. Since $\rho^\varepsilon$ is a weak solution of \eqref{eq:intro-main}, the pair $(\rho^\varepsilon, v^\varepsilon)$ satisfies
\begin{align}
    \MoveEqLeft\int_{\bbS^{n-1}} \varphi(x) d \rho_t^{\varepsilon}(x)-\int_{\bbS^{n-1}} \varphi(x) d \rho_0(x)\notag \\
&= -\int_0^t \int_{\bbS^{n-1} \times \bbS^{n-1}}g_x\Big(\nabla_x \varphi(x),  \nabla_x W(x, y)\Big) d \rho_s^{\varepsilon}(y) d \rho_s^{\varepsilon}(x) d s\notag \\
&\qquad-\int_0^t \int_{\bbS^{n-1} }g_x\Big(\nabla_x \varphi(x),  \nabla_x v^\varepsilon_s(x)\Big)  v_s^{\varepsilon}(x) d\sigma(x)d s \notag  \\
&\qquad+ \int_0^t\int_{\bbS^{n-1}} g_x(r_s^\varepsilon(x), \nabla v^\varepsilon_s(x))d\sigma(x)ds.
\label{eq:th:main-diff:weak-sol}
\end{align}
Using the uniform bound on $\| \nabla v^\varepsilon\|_{L^2(0, T;  L^2(T\bbS^{n-1}))}$ from Lemma \ref{lemma:bound-v} and the convergence of the residual term proved in Lemma \ref{lem:residual} we get
\[
\begin{aligned}
\MoveEqLeft\int_0^t\int_{\bbS^{n-1}} g_x(r_s^\varepsilon(x), \nabla v^\varepsilon_s(x))d\sigma(x)ds \\
&\leq \|r^\varepsilon\|_{L^2(0, T;  L^2(T\bbS^{n-1}))}\| \nabla v^\varepsilon\|_{L^2(0, T;  L^2(T\bbS^{n-1}))} \to 0.
\end{aligned}
\]
Next, note that for arbitrary $\varphi \in C^\infty$, the function $g_x\Big(\nabla_x \varphi(x),  \nabla_x W(x, y)\Big)$ is uniformly bounded on $\bbS^{n-1}$. From the weak convergence $\rho^\varepsilon_s \stackrel{w}{\to} \tilde\rho_s$ we deduce 
\[
\begin{aligned}
\MoveEqLeft\int_{\bbS^{n-1} \times \bbS^{n-1}}g_x\Big(\nabla_x \varphi(x),  \nabla_x W(x, y)\Big) d \rho_s^{\varepsilon}(y) d \rho_s^{\varepsilon}(x) \\
&\to \int_{\bbS^{n-1} \times \bbS^{n-1}}g_x\Big(\nabla_x \varphi(x),  \nabla_x W(x, y)\Big) d \tilde\rho_s(y) d \tilde\rho_s(x). 
\end{aligned}
\]
Thus, the dominated convergence theorem guarantees the convergence of the first term in~\eqref{eq:th:main-diff:weak-sol}.
Finally, by Lemma \ref{lemma:bound-v}, the sequence $v^{\varepsilon_k}$ satisfies
\[
v^{\varepsilon_k} \stackrel{w}{\to} \tilde v, \qquad \text{in } L^2(0, T;  H^1(\bbS^{n-1})), 
\]
along a subsequence and, by Lemma \ref{lem:comp-v}
\[
v^{\varepsilon_k} \to \tilde v, \qquad \text{strongly in } L^2(0, T;  L^2(\bbS^{n-1})). 
\]
As a result, for any $\varphi \in C^\infty$, by the Cauchy-Schwartz inequality we obtain
\[
\begin{aligned}
\MoveEqLeft\left| \int_0^t \int_{\bbS^{n-1} }g_x\Big(\nabla_x \varphi(x),  \nabla_x v^\varepsilon_s(x)\Big)  d v_s^{\varepsilon}(x) d s   \right|\\
&\leq \left| \int_0^t \int_{\bbS^{n-1} }g_x\Big(\nabla_x \varphi(x),  \nabla_x v^\varepsilon_s(x)\Big)(v_s^{\varepsilon} - \tilde v_s) d\sigma d s   \right| \\
&\qquad+ \left| \int_0^t \int_{\bbS^{n-1} }g_x\Big(\nabla_x \varphi(x),  (\nabla_x v^\varepsilon_s(x) - \nabla_x \tilde v_s(x)\Big)\tilde v_s  d\sigma d s   \right| \\
&\leq \|\nabla \varphi\|_{L^\infty(T\bbS^{n-1})}\|\nabla v^\varepsilon_s\|_{L^2(0, T;  L^2(T\bbS^{n-1}))} \|v_s^{\varepsilon} - \tilde v_s\|_{L^2(0, T;  L^2(T\bbS^{n-1}))} \\
&\qquad+   \left|\int_0^t \int_{\bbS^{n-1} }g_x\Big(\tilde v_s\nabla_x \varphi(x),  (\nabla_x v^\varepsilon_s(x) - \nabla_x \tilde v_s(x)\Big)d\sigma d s \right| \to 0,
\end{aligned}
\]
since $\tilde v\nabla_x \varphi \in L^2(0, T;  L^2(T\bbS^{n-1}))$. Combining the results, we conclude that the pair $(\tilde\rho, \tilde v)$ satisfies 
\[
\begin{aligned}
    \MoveEqLeft\int_{\bbS^{n-1}} \varphi(x) d \tilde\rho_t(x)-\int_{\bbS^{n-1}} \varphi(x) d \rho_0(x)\\
&= -\int_0^t \int_{\bbS^{n-1} \times \bbS^{n-1}}g_x\Big(\nabla_x \varphi(x),  \nabla_x W(x, y)\Big) d \tilde \rho_s(y) d \tilde \rho_s(x) d s\\
&\qquad-\int_0^t \int_{\bbS^{n-1} }g_x\Big(\nabla_x \varphi(x),  \nabla_x \tilde v_s(x)\Big)  d \tilde v_s(x) d s,
\end{aligned}
\]
Using Lemma \ref{lem:limits} we deduce that $\tilde \rho = \tilde v$. Moreover, arguing analogously to the proof of Lemma \ref{lem:limits} we conclude that
\[
\begin{aligned}
\MoveEqLeft
\int_0^t \int_{\bbS^{n-1} \times \bbS^{n-1}}g_x\Big(\nabla_x \varphi(x),  \nabla_x W(x, y)\Big) d \tilde \rho_s(y) d \tilde \rho_s(x) d s\\
&= \int_0^t \int_{\bbS^{n-1} \times \bbS^{n-1}}g_x\Big(\nabla_x \varphi(x),  \nabla_x W(x, y)\Big)  \tilde v_s(y)  \tilde v_s(x) d\sigma(x)d\sigma(y)d s,
\end{aligned}
\]
and therefore the curve $\tilde v$ is a weak solution of \eqref{eq:intro-diffuse} in the sense of Definition~\ref{def:sol-ade}.
\end{proof}

\section{On the relation to transformer models}
\label{sec:relevance}

\subsection{Transformers}
In this section we present a toy transformer model with two self-attention heads as a motivating example for our analysis. We argue that the choice of the model with local repulsion and global attraction is well-motivated from the application perspective of transformers in natural language processing. We also interpret the boundedness of the solutions from the machine-learning perspective and claim that it is a desirable behaviour for the given model.

Transformers are a class of machine-learning models primarily designed for natural language processing tasks. A common approach in natural language processing is to build a vocabulary consisting of all possible words (or other small lexical elements called \emph{tokens}) and assign a (unit) vector value to every element of the vocabulary. Having done so, every text can than be split into a sequence of words and represented as a sequence of vectors corresponding to the given tokens. In particular, a sentence of length $d$ has a representation  $(x_i)_{1\leq i\leq d}$, $x_i \in \bbR^{n}$, where $n$ is the dimension of the model. 

 A transformer model operates on such representations and consists of \emph{self-attention} blocks, which have been first introduced by Vaswani et al. in \cite{vaswani2017attention}, as well as linear and normalization layers. A \emph{self-attention} layer $SA: \bbR^{n \times d} \to \bbR^{n \times d}$ maps a sequence of $d$  vectors in $\R^n$ into a similar sequence of vectors of the same size  and has the structure
\begin{equation}\label{transformer}
  SA(X)_i := \frac{1}{\sum_{j=1}^d e^{\left<Qx_i, Kx_j\right>}}\sum_{j=1}^d e^{\left<Qx_i, Kx_j\right>}Vx_j, \quad 1\leq i \leq d, \notag
\end{equation}
where $K, Q, V \in \bbR^{n\times n}$ are real-valued matrices. 
In this work we consider a simple version of a transformer, namely a residual network with only self-attention layers. In this model every vector $x_i$ follows the dynamics:
\begin{equation}
\label{eq:one-head}
  x_i^{k+1} = x_i^{k} + SA(X^k)_i =x_i^{k} + \frac{1}{\sum_{j=1}^d e^{\left<Q^kx_i^k, K^kx_j^k\right>}}\sum_{j=1}^d e^{\left<Q^kx_i^k, K^kx_j^k\right>}V^kx_j^k. 
\end{equation}
Note that this dynamics is different from the `training dynamics', corresponding to the evolution of parameters $Q^k$, $K^k$, and $V^k$ during optimization. In this case the index~$k$ corresponds to the $k$-th layer of the model but not to the $k$-th step of the training procedure. 

Both inputs and outputs of a transformer are sequences of (unit) vectors. The output sequence, however, does not have a direct interpretation and an additional model is always used to make a decision. The typical choice of the decision being made is the \emph{next token prediction}, the standard formulation of the language modeling problem, and we give an illustrative example in Section \ref{ssec:scaling}. In this example we also give a \emph{synthetic} interpretation of the output of a transformer in the absence of the additional model.

The model \eqref{eq:one-head} can be seen as a time-discretization of an interacting particle system and thus can also be studied on the level of measures as suggested in \cite{sander2022sinkformers}. In  \cite{geshkovski2024mathematical} it was proposed to further reduce the model in order to simplify the analysis. In particular, the following `toy transformer model' was introduced:
\begin{equation}
\label{eq:toy-transformer}
\dot x_i  = \frac{1}{d}P_{T_{x_i}\bbS^{n-1}}\bra[\bigg]{\sum_{j=1}^d e^{\beta\left<x_i, x_j\right>}x_j}, \qquad  \beta >0,
\end{equation}
as a proxy for \eqref{eq:one-head}. Here $P_{T_{x_i}\bbS^{n-1}}$ is the orthogonal projection in $\R^n$ onto the tangent plane at $x_i$. The system \eqref{eq:toy-transformer} corresponds to~\eqref{eq:one-head} with a specific choice of the parameters, namely $K^k =I,\ V^k=\alpha I, \ Q^k = \beta I$, with a few additional modifications; we refer the reader to \cite{geshkovski2024mathematical} for the details. In the measure-valued setting this `toy transformer model' is equivalent to the aggregation equation~\eqref{eq:intro-main} with $W_\beta(x, y) := -\frac{1}{\beta}e^{\beta\left<x, y\right>}$ and $V_\e=0$.

At the same time, real-world language models have more involved structure than~\eqref{eq:one-head}, and one of the key differences is that every residual step includes summation over \emph{several} self-attention heads. In particular, the residual step of a transformer with $M$ heads takes the form
\begin{equation*}
  x_i^{k+1} = x_i^{k} + \sum_{m=1}^M SA_m(X^k)_i =x_i^{k} + \sum_{m=1}^M \frac{1}{\sum_{j=1}^d e^{\left<Q_m^kx_i^k, K_m^kx_j^k\right>}}\sum_{j=1}^d e^{\left<Q_m^kx_i^k, K_m^kx_j^k\right>}V_m^kx_j^k, 
\end{equation*}
where $K_m^k, Q_m^k, V_m^k \in \bbR^{n\times n}$ are the parameters of $m$-th head of the $k$-th layer. Applying similar simplifications as in the single-head setting we obtain the continuous dynamics
\[
\dot x_i  = \frac{1}{d}\sum_{m=1}^M\alpha_m\sum_{j=1}^d e^{\beta_m\left<x_i, x_j\right>}x_j, 
\]
where $\beta_m$ is the interaction parameter of the $m$-th self-attention head and $\alpha_m$ is the weight of the corresponding head. The measure-valued counterpart in this case takes the form
\[
\partial_t\mu_t + \sum_{m=1}^M\nabla \cdot(\mu_t \nabla_x  W_{m}(x, \cdot) *\mu_t) = 0, \quad \text{where } W_{m} = \alpha_m W_{\beta_m}.
\]

In this work we consider a model with $M=2$ heads and we assume that the first head is \emph{globally attractive}, which corresponds to $\alpha_1, \beta_1 > 0$ and $\beta_1 \sim 1$ and the second head is \emph{locally repulsive}, corresponding to the parameters $\beta_2 \gg 1$ and $\alpha_2 < 0$. Note that $\beta_2 \gg 1$ implies that the interaction is strongly localized and $\alpha_2 < 0$ guarantees that it is repulsive. 
This leads to the family~\eqref{eq:intro-main} of evolution equations 
where the fixed interaction kernel~$W$ is the attractive self-attention head $W := \alpha_1 W_{\beta_1}$, and the localized kernel $V_\varepsilon$ is of the form $V_\varepsilon:= \alpha_\varepsilon W_{\beta_\varepsilon}$ with $\beta_\varepsilon = \varepsilon^{-1}$ and 
\[
\alpha_\varepsilon = \left(\int e^{\beta_\varepsilon\left<x_0, x\right>} d\sigma(x)\right)^{-1},
\qquad\text{for arbitrary $x_0 \in \bbS^{n-1}$. }
\]
We also remark that the fixed interaction kernel may include any finite number of self-attention heads with bounded parameters $\alpha_m, \beta_m < C$. The main question of this work is the behavior of the solutions in the limit of $\varepsilon \to 0$ and we discuss the relevance of such a setting below.

Note that the behaviour of transformers with attractive interaction, corresponding to $\alpha, \beta > 0$, is extensively studied in the range of recent works including \cite{geshkovski2024emergence, geshkovski2024mathematical, geshkovski2024dynamic, criscitiello2024synchronization, bruno2024emergence, bruno2025multiscale, polyanskiy2025synchronization, alcalde2025attention} and the repulsive interaction case is partially covered in \cite{geshkovski2024emergence, burger2025analysis, bruno2025multiscale, alcalde2025attention}. Nevertheless, in all of these papers the study is restricted to a single-head transformer model in which the sets of repulsive and attractive directions are disjoint. In other words, the tokens repel along some directions and attract along others. To the best of our knowledge, this work is the first theoretical analysis of toy transformer models with \emph{competing} attractive and repulsive forces in the sense that attraction and repulsion happen along the same direction.

\begin{remark}[Linear diffusion]
    The limit of a singular interaction kernel has also been considered in the presence of token-dependent rescaling, namely a prefactor of the form $\left(\sum_{j=1}^d e^{\left<Qx_i^k, Kx_j^k\right>}\right)^{-1}$, in \cite{sander2022sinkformers, bruno2025multiscale}. This model has been related to the heat equation. Formally, in the limit of the localized kernel, the inverse prefactor converges to the underlying measure, and thus the corresponding continuity equation takes the form
    \[
    \partial_t \mu_t - \nabla \cdot \left(\frac{d \mu_t}{d\mu_t }\nabla \mu_t\right) = \partial_t \mu_t -\Delta \mu_t = 0.
    \]
    We expect that the techniques used in this paper may also be of use to prove convergence of the solutions of rescaled transformers to the heat flow. We also remark that the aggregation model with the transformer interaction kernel in the presence of linear diffusion has been recently studied in \cite{shalova2024anoisy-transformer, balasubramanian2025structure}.
\end{remark}

\begin{remark}[Equivalence with the switching model]
    Consider the model with two \emph{alternating} self-attention heads, namely
    \begin{equation*}
  x_i^{k+1} = x_i^{k} +  \alpha SA_1(X^k)_i, \qquad  x_i^{k+2} = x_i^{k+1} +  \alpha SA_2(X^{k+1})_i.
\end{equation*}
In this case the model switches from head $SA_1$ and $SA_2$ and back at every iteration of the algorithm. For small $\alpha$ such a model can be interpreted as a splitting scheme applied to the ODE driven by the sum of the contribution of two heads
\[
\dot x_i  = \frac{1}{2d}\sum_{m=1}^2\sum_{j=1}^d e^{\beta_m\left<x_i, x_j\right>}x_j.
\]
Such splitting is a common approach in numerical solvers of various PDEs, including aggregation equations and the porous-medium PDE; see e.g.~\cite{holden2010splitting}.
\end{remark}

\subsection{Properties of the exponential kernel}
We consider the family of kernels of the form
\[
V_\varepsilon(x, y) : = \alpha_\varepsilon e^{\left<x, y\right>/\varepsilon}, \quad \text{where } \quad \alpha_\varepsilon= \left(\int e^{\left<x, y\right>/\varepsilon} d\sigma(x)\right)^{-1}.
\]
For every $\varepsilon \in \bbR_+$ the kernel $V_\varepsilon$ is a smooth function and hence $V_\varepsilon\in H^1(\bbS^{n-1} \times  \bbS^{n-1})\cap C_b(\bbS^{n-1} \times  \bbS^{n-1})$. It was calculated in~\cite[Proposition~6.1]{shalova2024anoisy-transformer} that the spherical harmonics decomposition of $V_\varepsilon$ has the form
\[
\hat V_{\varepsilon,l} = \alpha_\varepsilon C(n, \varepsilon)I_{l
+\frac{n-2}{2}}(1/\varepsilon).
\]
At the same time, the normalization constant is the projection of $f_\varepsilon= e^{\left<x_0, \cdot\right>/\varepsilon}$ onto the constant function, namely the spherical harmonic $Y_{0, 0}$, and thus  $\alpha_\varepsilon = C(n, \varepsilon)I_{\frac{n-2}{2}}(1/\varepsilon)$. Since the modified Bessel functions $I_z(\beta)$ are positive and decreasing in $z$, we conclude that $\hat V_{\varepsilon,l} \leq 1$. Moreover, for every fixed $l\in \bbN$ we have
\[
\frac{I_{l
+\frac{n-2}{2}}(1/\varepsilon)}{I_{\frac{n-2}{2}}(1/\varepsilon)} \sim \frac{e^{1/\varepsilon}(\sqrt{2\pi\varepsilon^{-1}})^{-1}(1+o(\varepsilon))}{e^{1/\varepsilon}(\sqrt{2\pi\varepsilon^{-1}})^{-1}(1+o(\varepsilon))} \to 1
\qquad\text{as }\e\to0.
\]
Finally, we need to verify that $\sum_{l} l^n\hat V_{\varepsilon, l} < \infty$. Since $f(x, y) = e^{\left<x, y\right>/\varepsilon}$ is a smooth function for every $\varepsilon >0$, we conclude that for every $p\in \bbN$ it holds that $\Delta^p f \in L^2(\bbS^{n-1}\times \bbS^{n-1})$. As a result we conclude that for every $p$ the following sum is finite
\[
\sum_{l} |\lambda_l|^p\hat V^2_{\varepsilon, l} = \left<f, \Delta^p f\right> \lesssim \sum_{l} l^{2p}\hat V^2_{\varepsilon, l} < \infty.
\]
Applying the Cauchy-Schwartz inequality we conclude that the exponential family of kernels satisfies the localization Assumption \ref{ass:repulsive}.

The key difficulty for establishing convergence of the transformer model is verification of Assumption \ref{ass:conv-root}, in particular the pointwise non-negativity. Alternatively, one could aim to work with a different distance function in Proposition \ref{prop:savare}.

\subsection{On the choice of the scaling} 
\label{ssec:scaling}
We argue that the setting of global attraction and local repulsion is optimal from the natural language processing perspective. Consider the problem of a missing (or next) token prediction: in this case the output distribution should be interpreted through the lens of possible semantics of the input text. In particular, in this context the global attraction force corresponds to the selection of a finite number of possible semantics and local repulsion provides a tool to ensure linguistic variability. In other words, the attractive kernel is responsible for the choice of \emph{meanings} and the local repulsion allows the model to choose among various \emph{synonyms} carrying the same \emph{meaning}. We clarify this remark on an example. 

Consider the following next token prediction task: we are given the sentence \[
\text{``A cat sat on a (?)'',}
\]
and are asked to predict the probability of the last word, denoted by (?). The possible answers might be \emph{mat}, \emph{couch}, \emph{sofa} or, maybe, \emph{tree}. While the answers \emph{couch} and \emph{tree} have very distinct semantic, the answers \emph{couch} and \emph{sofa} are semantically very similar. As a result, we expect the vector representations of \emph{couch} and \emph{sofa} to be almost identical, $\left<x_{couch}, x_{sofa}\right> \approx 1$, while the representations of \emph{couch} and \emph{tree} to be significantly distinct, $\left<x_{couch}, x_{tree}\right> < 1 - \delta$. 

As discussed in \cite{geshkovski2024mathematical}, clustering of the tokens can be interpreted as an extraction of a finite number of semantics. In particular, in \cite{geshkovski2024mathematical} it is shown that the solutions of the purely attractive model (under rather mild assumptions) converge to a single point, which can be interpreted as a choice of a single semantic (or even a single token). In addition, the attractive model has also been shown to exhibit metastable behaviour \cite{geshkovski2024dynamic, bruno2024emergence}, which shows that on a finite-time horizon the solution might concentrate on a finite number of different semantics. We argue that the local repulsion complements the picture of the global clustering by providing a tool to ensure local variability. 

In terms of the example, the global clustering would correspond to predicting one single option, for example \emph{couch}. A metastable state with two clusters would correspond to having a probability measure concentrated on the words \emph{couch} and \emph{tree}. At the same time, the local variability mechanism would smooth the bi-modal distribution and would allow for all close enough synonyms of both \emph{couch} and \emph{tree}.

\section{Points of discussion}
\label{sec:discussion}
\subsection{The fixed-$\varepsilon$ regime}
Note that the for fixed $\varepsilon >0$ the model corresponds to an aggregation PDE with interaction kernel $U_\varepsilon = W + V_\varepsilon$ with spherical harmonics decomposition $\hat U_{\varepsilon, k} = \hat W_{k} + \hat V_{\varepsilon, k}$. In particular, assuming that $-W$ is a stable kernel in the sense that $\hat W_{k} <0$ for all $k\in\bbN$, the addition of the repulsive kernel will lead to cancellation of the high harmonics. Here we assumed that the coefficients of the repulsive kernel $V_\varepsilon$ decay more slowly in absolute value than the coefficients of the attractive kernel $W$. 

Considering the same model in the presence of noise, the results from~\cite{shalova2024anoisy-transformer} imply that the model will only exhibit bifurcations corresponding to the low harmonics. At the same time, since the kernel is no longer guaranteed to be decreasing, the minimizers might correspond to non-synchronized measures. In particular, the minimizers might be multimodal in contrast to the pure aggregation case.

\subsection{Extensions}
We argue that our result can be generalized to a larger class of manifolds. The key observations allowing to establish the desired convergence are (a) the structure of the interaction kernel of form
\[
W(x, y) = \sum_l \hat W_l Y_l(x) Y_l(y),
\]
where $\{Y_l\}_{l\in\bbN}$ are the eigenfunctions of the Laplace-Beltrami operator,  and (b) the bound on the Wasserstein distance under the convolution as in Lemma~\ref{lem:was-bound}. We conjecture that the latter can be generalized to  more general smooth compact manifolds. 

We also remark that, for example, the heat kernel has the desired representation on an arbitrary smooth Riemannian manifold $\calM$. Formally, the heat kernel also converges to the point-estimation kernel on an arbitrary manifold and is thus a natural candidate to model the local repulsion on manifolds in the given context.

Finally, note that $\bbT^1 = \bbS^1$ and thus our analysis directly applies to the aggregation PDE on $\bbR$ with periodic boundary conditions.

\bibliographystyle{myalpha}
\bibliography{bibliography}

\appendix
\section{Differential forms}
\label{sec:geometry}

We recall a number of facts from differential geometry, in particular the geometry of Riemannian manifolds. Good background references are~\cite{Willmore65,Jost05,lee2018introduction}. 

\subsection{Generalities}

Let $(\calM, g)$ be a smooth Riemannian manifold (without boundary) with a metric $g$, and we assume that the reader is familiar with geodesics, connections, and the covariant derivative. 

\smallskip

In this work we will always consider the Levi-Civita connection.
Given this connection, for every point $x\in \calM$ and every tangent vector $v\in T_x\calM$ there exists a unique
geodesic $\gamma_{x,v}: [0,1] \to \calM$ with initial conditions $\gamma(0) = x, \ \gamma'(0) = v$. Then the exponential map is defined to be the end point of this geodesic:
\[
\exp_x(v) = \gamma_{x,v}(1).
\]
For small $v$ the exponential map is invertible, and we write the inverse as the `logarithmic map' $\log_x$. The derivative of the squared distance also is well-defined for short distances, and can be expressed in terms of the logarithmic map:
\begin{equation}
\label{eq:deriv-of-dsquared}
\nabla_y \dist^2(x,y) = -2\log_y x.
\end{equation}

For a smooth function $f: \calM \to \bbR$ its differential at a point $x\in\calM$ is a linear map $df_x: T_x\calM \to \bbR$ such that for any smooth curve satisfying $\gamma(0) = x, \ \gamma'(0) = v$ it holds that
\[
df_x(\gamma'(0)) = (f\circ \gamma)'(0),
\]
where the expression on the right hand side $f\circ \gamma$ is a curve in $\bbR$. 
The gradient of a smooth function $f: \calM \to \bbR$ is a vector field $\grad f$ which for any vector field $Z$ on $\calM$ and any point $x\in \calM$ satisfies
\[
g_x((\grad f)_x, Z_x) = df_x(Z_x).
\]

\begin{example}
    On the unit sphere $\calM = \bbS^{n-1}$ equipped with the distance $\dist(x, y) = \arccos(\left<x, y\right>)$ the manifold gradient $\grad_{\bbS^{n-1}} f$ in Euclidean coordinates is equal to the projection of the Euclidean gradient onto the tangent space at $x$:
    \[
    \grad_{\bbS^{n-1}} f_x = \nabla_{\bbR^n} f_x -  \left<\nabla_{\bbR^n} f_x, x\right> x,
    \]
    where $\left<\cdot, \cdot \right>$ is a Euclidean scalar product and $\nabla_{\bbR^n} f_x = \left(\frac{\partial f(x)}{\partial x_1}, \dots, \frac{\partial f(x)}{\partial x_n} \right)$.
\end{example}

The divergence of a smooth vector field $X$ on a manifold is the trace of the covariant derivative $\nabla X$ with Levi-Civita connection:
\[
\divr X := \tr (\nabla X),
\]
where $\nabla X$ is an object which for every smooth vector field $Y$ satisfies $\nabla X(Y) = \nabla_Y X$. 
In particular, if $\{e_i\}$ is a local orthonormal basis of the tangent bundle $T\calM$, then
\[
\divr X = \sum_i \left<\nabla_{e_i} X, e_i\right> = \sum_i g(\nabla_{e_i} X, e_i).
\]
An $n$-dimensional Riemannian manifold has a canonical volume measure $m$ which in local coordinates takes the form
\[
dm = \sqrt{\det g_{ij}}dx,
\]
where $g_{ij}$ is the metric tensor in local coordinates and $dx$ is the Lebesgue volume element in $\bbR^n$. As a result for any compact manifold without boundary $(\calM, g)$ we get the following rule of integration by parts:
\[
\int \phi \divr X \, dm = -\int g(\grad \phi, X )\,dm
\]
for any $\phi \in C^\infty(\calM)$.

The Laplace-Beltrami operator is a generalization of the Laplace operator to the manifold setting, namely for any smooth function $f: \calM \to \bbR$ such that $\grad f$ is a smooth vector field the action of the Laplace-Beltrami operator is defined as
\[
\Delta f := \divr (\grad f).
\]
\begin{example}[Corollary 1.4.3 {\cite{dai2013approximation}}]
    On a unit sphere $\calM = \bbS^{n-1}$ equipped with distance $\dist(x, y) = \arccos(\left<x, y\right>)$ the Laplace-Beltrami operator $\Delta f$ is equal to the Euclidean Laplacian of the function $\tilde f : \bbR^n \to \bbR$ defined as $\tilde f (x) = f(x/\|x\|)$:
    \[
    \Delta_{\bbS^{n-1}} f = \Delta_{\bbR^n} \tilde f, 
    \]
    where $\Delta_{\bbR^n} = \sum_i \frac{\partial^2}{\partial x_i^2}$.
\end{example}

\subsection{Parallel transport}
\label{app:parallel}
Consider a smooth curve $\gamma: [0,1] \to \calM$ and a connection on $\calM$. The parallel transport of a vector $v\in T_x \calM$ along $\gamma$ is a vector field $V$ on $\gamma$ satisfying the following properties:
\begin{itemize}
    \item $\nabla_{\gamma'(s)} V_{\gamma(s)} = 0$ for all $s\in (0,1)$,
    \item $V_{\gamma(0)} = v$.
\end{itemize}
For $0\leq s \leq t\leq 1$ the linear map $\Gamma(\gamma)_s^t: T_{\gamma(s)}\calM \to T_{\gamma(t)}\calM$ satisfying $\Gamma(\gamma)_s^t V_{\gamma(s)} := V_{\gamma(t)}$ for arbitrary $V_{\gamma(s)} \in T_{\gamma(s)}\calM$ is called the parallel transport map along $\gamma$.

Since in this work we consider the Levi-Civita connection, the parallel transport along any smooth curve is metric-preserving, in the sense that for any $u, v \in T_{\gamma(s)}\calM$ we have 
\[
g_{\gamma(s)}(u, v) = g_{\gamma(t) }\bra[\big]{\Gamma(\gamma)_s^t u , \Gamma(\gamma)_s^t v }.
\]
Applying this property to the geodesic curves we obtain the following characterization. For two points $x, y \in \calM$ such that there exists a unique geodesic $\gamma_{x\to y}$, let $v_{x\to y} = \log_x y$, then
\[
x = \exp_y  v_{y\to x},
\]
where $v_{y\to x} = - \Gamma(\gamma_{x\to y})_0^1 v_{x\to y} $ and $\|v_{y\to x}\|_{L^2(T_y\calM)} =\|v_{x\to y}\|_{L^2(T_x\calM)}$.

\section{Distance between geodesics on a sphere}
\label{app:geodesics}
\begin{proof}[Proof of Lemma \ref{lem:was-geo}] 
W.l.o.g. let $\|v_x\| = 1$; note that rescaling of $v_x$ is equivalent to the rescaling of time and thus does not change the character of the dynamics.

\emph{Step 1: Reducing the problem to $\bbS^{2}$.} We begin by showing that the problem can be reduced to the three-dimensional setting. For $n\leq 3$ it is trivially true. Assume that $n\geq 4$, then we argue as follows. Every geodesic $t\mapsto \exp_x t v_x$ forms a great circle which lies on the plane in $\bbR^n$ spanned by vectors $x$ and $v_x$. Thus, it is enough to show that the dimension of the $\text{span}\{x, y, v_x, v_y\}$ is at most $3$. W.l.o.g. assume that $x = (1, 0, 0, \dots 0)$ and $y = (\cos \theta, \sin \theta, 0, \dots 0)$. First note that if $x \parallel y$, the condition is trivially satisfied and thus it enough to consider $\theta \neq k\pi$. Moreover, in this case the parallel transport map is the rotation matrix of the form
    \[
    \Pi_{yx}=
    \begin{pmatrix}
        \cos \theta &  -\sin \theta & 0 \\
        \sin \theta & \cos \theta & 0 \\
        0& 0& I
    \end{pmatrix}.
    \]
As a result, the vector $v_y$ takes the form
\[
v_y = \Pi_{yx}v_x = \begin{pmatrix} v_x^1 \cos \theta - v_x^2\sin \theta\\ v_x^1 \sin \theta + v_x^2 \cos \theta \\
\vdots \\ v_x^n.
\end{pmatrix}
\]
Note that all the components of the vector $v_x - v_y$ except for the first two are zero, and thus we conclude that $v_x - v_y \in \text{span}\{x, y\}$ and thus $\dim \text{span}\{x, y, v_x, v_y\} \leq~3$.

\begin{figure}
    \centering
    \includegraphics[width=0.5\linewidth]{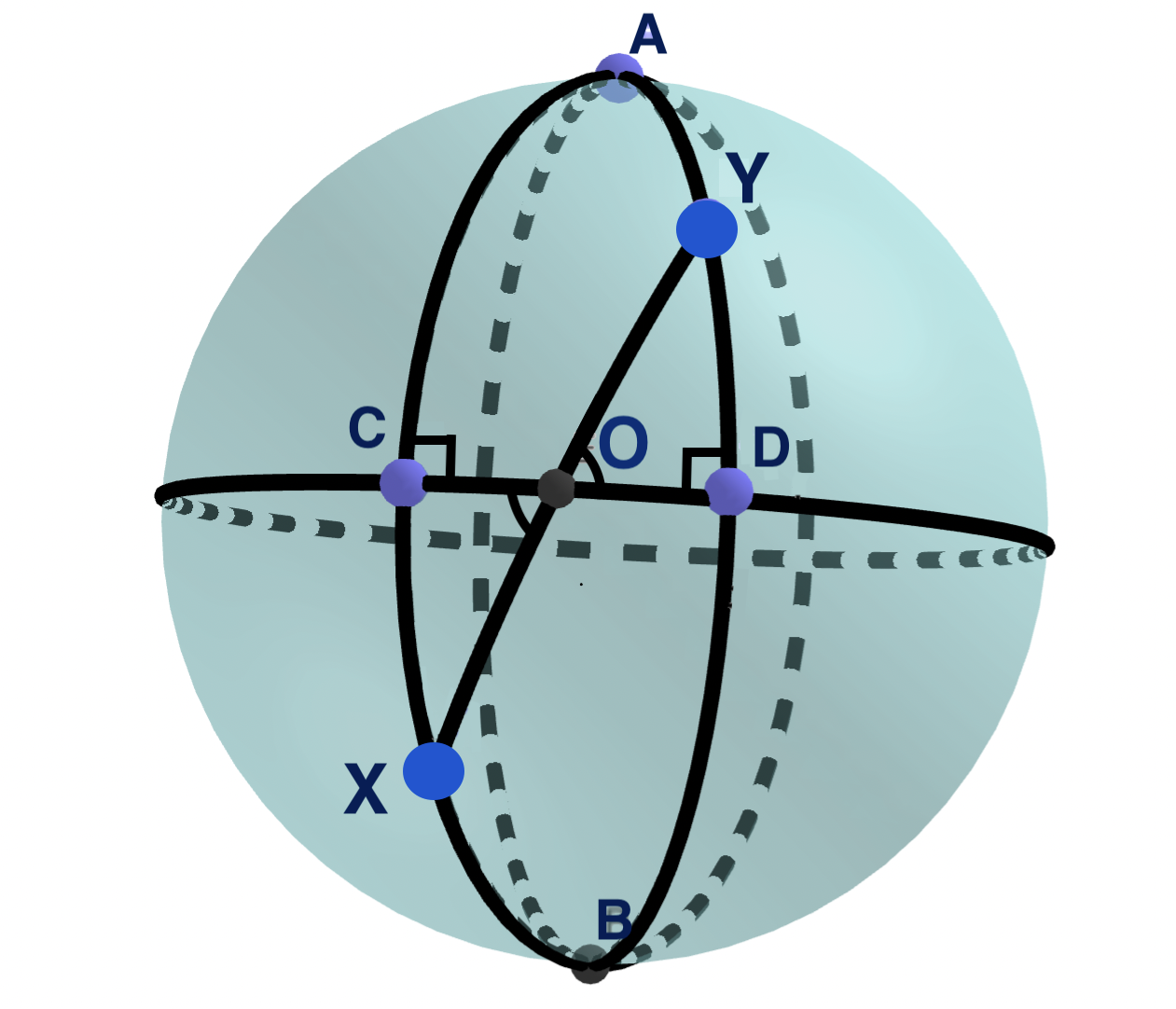}
    \caption{}
    \label{fig:sphere}
\end{figure}

\emph{Step 2: The static problem.}
Since the problem is intrinsically $3$-dimensional, we introduce the following construction on $\bbS^2$, see Figure \ref{fig:sphere}. Given two points $X, Y \in\bbS^{2}$ and the unit-length velocity vectors $v_x, v_y =\Pi_{yx}v_x$, we draw the corresponding geodesics. We call the points of intersections of the geodesics $A$ and $B$. By the metric preservation of the parallel transport map we conclude that $\angle AXY = \angle BYX$, where $\angle AXY$ denotes the angle of the spherical triangle, since
\[
\begin{aligned}
\|\log_xy\|\cos (\angle AXY )&= g_x(v_x, \log_xy) = g_y(\Pi_{yx}v_x, \Pi_{yx}\log_xy) \\
&= g_y(v_y, -\log_y x)= \cos (\angle BYX)\|\log_yx\|.
\end{aligned}
\]
At the same time, by construction $\angle YXB = \pi - \angle AXY = \pi - \angle BYX =\angle AYX$ and $\angle XAY = \angle XBY$. Since the triangles $AYX$ and $BXY$ share the side $XY$ and the correspondning angles are the same we conclude that the triangles are identical. This implies that $AY = BX$, where with a slight abuse of notation we use $AY = \dist(A, Y)$ etc. 

Let $C, D$ be the medians of both half-circles $AB$ (see Figure \ref{fig:sphere}) and draw a geodesic through $C$ and $D$. Let $O$ be the point of intersection of $CD$ and $XY$. It is easy to verify that the angles of triangles $COX$ and $DOY$ are pairwise the same. Moreover we get $CX = \pi - BX = \pi - AY = DY$ and thus the triangles $COX$ and $DOY$ are again identical.

By the triangle inequality we get the estimate
\[
\label{eq:sph-est}
XY  \leq CD + CX + DY.
\]
Moreover, by construction $\angle COX \leq \pi/2$ and thus, using the spherical law of sines 
\[
\frac{\sin (\angle COX)}{\sin CX} = \frac{\sin(\angle OCX)}{\sin XY/2},
\]
we conclude that $CX = DY \leq XY/2$. By the triangle inequality we conclude that $CD \leq 2(CX + XO) \leq 2XY$, which gives the upper bound
\begin{equation}
\label{eq:sph-est2}
CD + CX + DY \leq 3XY.
\end{equation}

\emph{Step 3: dynamic problem.}
Finally, we introduce the dynamic version of the triangle inequality \eqref{eq:sph-est}. Recall that $\gamma_X(t) = \exp_X tv_x$, then the geodesic $\gamma_C(t) = \exp_C t(\Pi_{CX} v_x)$ satisfies $\gamma_C(t) = \gamma_X(t - \delta)$ for some $\delta \in \bbR$, implying that $\dist(\gamma_X(t), \gamma_C(t)) = \dist(\gamma_X(0), \gamma_C(0)) = CX$.

Analogously, let $\gamma_D(t) = \exp_D t(\Pi_{DY} v_y)$. Since $\|v_x\| = \|v_y\|$, by construction $\dist(\gamma_C(t), A) = \dist(\gamma_D(t), A)$ for all $t\in\bbR$. Thus, we conclude that points $C$ and $D$ run synchronously along corresponding geodesics, which implies that $\dist(\gamma_C(t), \gamma_D(t)) \leq \dist(\gamma_C(0), \gamma_D(0))) = CD$ since at $t=0$ the geodesic $CD$ is orthogonal to both geodesics $AC$ and $AD$.

Combining the above estimates and using inequality \eqref{eq:sph-est2} we obtain the dynamic version of the triangle inequality \eqref{eq:sph-est}, namely
\[
\begin{aligned}
    \MoveEqLeft\dist(\gamma_X(t), \gamma_Y(t)) \leq \dist(\gamma_X(t), \gamma_C(t)) + \dist(\gamma_Y(t), \gamma_D(t)) + \dist(\gamma_C(t), \gamma_D(t)) \\
    &\leq CD + CX + DY \leq  3\dist(\gamma_X(0), \gamma_Y(0)),
    \end{aligned}
\]
which concludes the proof.
\end{proof}

\end{document}